\numberwithin{equation}{section}
\newtheorem {Problem}{Problem}[section]
\newtheorem {Lemma}{Lemma}[section]
\newtheorem {Theorem} {Theorem}[section]
\newtheorem {Claim} {Claim}[section]
\begin{document}

\title{Ordinary and spectral extremal problems on vertex disjoint copies of even fans}

\author{Yiting Cai\footnote{E-mail: yitingcai@m.scnu.edu.cn},
Bo Zhou\footnote{%Corresponding author.
E-mail: zhoubo@m.scnu.edu.cn}\\
School of  Mathematical Sciences, South China Normal University,\\
Guangzhou 510631, P.R. China}

\date{}
\maketitle

\begin{abstract}
Let $\mathrm{ex}(n, F)$ and $\mathrm{spex}(n, F)$ be the maximum size and spectral radius among all $F$-free graphs with fixed order $n$, respectively. A fan is a graph $P_1\vee P_{s}$ (join of a vertex and a path of order $s$) for $s\ge 3$, and it is called an even fan if $s$ is even.
In this paper, we study $\mathrm{ex}(n,t(P_1\vee P_{2k}))$, $\mathrm{spex}(n,t(P_1\vee P_{2k}))$ with
$t\ge 1$ and $k\ge 3$ and characterize the corresponding extremal graphs for sufficiently large $n$.
\\ \\
{\bf Keywords:} graph size,  spectral radius, extremal graphs
\end{abstract}

\section{Introduction}

We consider simple and undirected graphs.
Let $G$ be a graph of order $n$ with vertex set $V(G)$ and edge set $E(G)$.
The size of $G$ is  $e(G)=|E(G)|$.
The adjacency matrix of $G$ is the $n\times n$ matrix $A(G)=(a_{uv})_{u,v\in V(G)}$, where, for any $u, v\in V(G)$,  $a_{uv}=1$ if $u$ and $v$ are adjacent and $0$ otherwise.
The spectral radius of $G$, denoted by $\rho(G)$, is the largest eigenvalue  of $A(G)$.

%Given a set $\mathcal{F}$ of graphs, if a graph $G$ does not contain one in $\mathcal{F}$ (as a subgraph), then $G$ is said to be $\mathcal{F}$-free.
%In particular, If $\mathcal{F}=\{F\}$, then say $G$ is $F$-free.
Given a graph $F$,  if a graph $G$ does not contain $F$ (as a subgraph), then $G$ is said to be $F$-free.
Let $\mathrm{ex}(n, F)$ be  the maximum size of graphs among all $F$-free graphs with fixed order $n$, and $\mathrm{Ex}(n, F)$  the set of $F$-free graphs of order $n$ with size $\mathrm{ex}(n, F)$.
Determining $\mathrm{ex}(n, F)$ and $\mathrm{Ex}(n, F)$ is typical in extremal combinatorics \cite{Fur}.
Let $\mathrm{spex}(n, F)$ be  the maximum  spectral radius of graphs among all $F$-free graphs with fixed order $n$, and $\mathrm{Spex}(n, F)$ the set of $F$-free graphs of order $n$ with spectral radius  $\mathrm{spex}(n, F)$. Determining $\mathrm{spex}(n, F)$ and $\mathrm{Spex}(n, F)$ was first proposed in generality in \cite{NV2}.

For vertex disjoint graphs $G$ and $H$, let $G\cup H$ denote the disjoint union of them. Denote by $tG$ the $t$ vertex-disjoint copies of $G$.
Denoted by $G\vee H$ the join of $G$ and $H$, which is obtained from $G\cup H$ by adding all possible edges between $V(G)$ and $V(H)$. Denote by $P_n$ the path of order $n$. A fan is a graph $P_1\vee P_{s}$ (join of a vertex and a path of order $s$) for $s\ge 3$, and it is called an even fan if $s$ is even.

In this paper, we study the following problem:

\begin{Problem} \label{pro}
For integers $t\ge 1$ and $k\ge 3$, determine
\begin{enumerate}
\item[(i)] $\mathrm{ex}(n, t(P_1\vee P_{2k}))$ and $\mathrm{Ex}(n, t(P_1\vee P_{2k}));$

\item[(ii)] $\mathrm{spex}(n, t(P_1\vee P_{2k}))$ and $\mathrm{Spex}(n, t(P_1\vee P_{2k}))$.
\end{enumerate}
\end{Problem}

For the case when $t=1$, Problem \ref{pro} has been settled in \cite{YLT,YLY}.
Instead of even fans, similar problems for paths \cite{CLZ, FYZ}, stars \cite{CLZ2}, odd wheels \cite{CDT,XZ,LND},  cycles \cite{Pik, Sim, FZL} and cliques \cite{Mo,NWK} have been studied.

Denote by $K_{n_1, n_2}$ the complete bipartite graph with classes of size $n_1$ and $n_2$. %where $n_1\ge n_2$.
Let $(V_1, V_2)$ be its bipartition with $|V_i|=n_i$ for $i=1,2$.
A nearly $k$-regular graph is a graph in which  each vertex has degree $k$ except for one vertex with degree $k-1$.
Let $\mathcal{K}_{n_1,n_2}^{k-1}(P_{2k})$ be the set of graphs, in which each graph $G$ is obtained
from a copy of $K_{n_1,n_2}$ adding edges between vertices of $V_1$ so that $G[V_1]$ is a $P_{2k}$-free and is $(k-1)$-regular or nearly $(k-1)$-regular.
Fix integer $k\ge 3$. Yuan \cite{YLT} determined $\mathrm{ex}(n, P_1\vee C_{2k})$  for sufficiently  large $n$ and remarked on Page 704 that similar argument leads to $\mathrm{ex}(n, P_1\vee P_{2k})$.

\begin{Theorem}\label{T1}\cite{YLT,YLY}
Let $k$ be an integer with $k\ge3$. For sufficiently large $n$,
\[
\mathrm{ex}(n, P_1\vee P_{2k})=\max\left\{n_1n_2+\left\lfloor\frac{k-1}{2}n_1\right\rfloor: n_1+n_2=n, n_1\ge n_2\right\}
\]
and
\[
\mathrm{Ex}(n, P_1\vee P_{2k})=\mathcal{K}_{n_1, n_2}^{k-1}(P_{2k}) \mbox{ with $n_1\ge n_2$}.
\]
%\[
%\mathrm{ex}(n, P_1\vee P_{2k})=\begin{cases} \lfloor\frac{n^2+n}{4}\rfloor, & \mbox{if $k=2$}\\
%\max\{n_1n_2+\lfloor\frac{k-1}{2}n_1\rfloor: n_1+n_2=n\}, & \mbox{if $k\ge3$}
%\end{cases}
%\]
\end{Theorem}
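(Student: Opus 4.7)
The plan is to prove the two directions separately.

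For the lower bound, take any $G \in \mathcal{K}_{n_1, n_2}^{k-1}(P_{2k})$ with $n_1 \geq n_2$ and $n_1 + n_2 = n$. The size count is immediate: $n_1 n_2$ edges between $V_1$ and $V_2$, plus $\lfloor(k-1)n_1/2\rfloor$ edges inside $V_1$. To verify that $G$ is $P_1 \vee P_{2k}$-free, suppose toward a contradiction that some $v \in V(G)$ is the center of a copy of $P_1 \vee P_{2k}$. If $v \in V_2$ then $N(v) = V_1$, so the required $P_{2k}$ in $N(v)$ must sit inside $G[V_1]$, contradicting the $P_{2k}$-freeness of $G[V_1]$. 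If $v \in V_1$ then $N(v) \subseteq V_2 \cup (V_1 \setminus \{v\})$; since $V_2$ is independent, any $P_{2k}$ in $G[N(v)]$ cannot contain two consecutive $V_2$-vertices, so it must use at least $k$ vertices from $V_1 \cap N(v)$. But $|V_1 \cap N(v)| \leq k-1$ by the (nearly) $(k-1)$-regularity of $G[V_1]$. Maximizing $n_1 n_2 + \lfloor(k-1)n_1/2\rfloor$ over admissible partitions then yields the claimed lower bound.

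For the upper bound, let $G$ be an extremal $P_1 \vee P_{2k}$-free graph. The crucial local observation is that for every $v$ the induced subgraph $G[N(v)]$ must itself be $P_{2k}$-free, so by the classical Erd\H{o}s--Gallai theorem $e(G[N(v)]) \leq (k-1) d(v)$. Since $P_1 \vee P_{2k}$ has chromatic number $3$, the Erd\H{o}s--Stone--Simonovits theorem gives $e(G) = (1+o(1)) n^2/4$, and the associated stability theorem forces $G$ to be close to $K_{\lceil n/2 \rceil, \lfloor n/2 \rfloor}$. Concretely, I would extract a bipartition $(V_1, V_2)$ of $V(G)$ with $|V_1| \geq |V_2|$ across which almost all edges run, with $V_2$ nearly independent and each $u \in V_2$ having $d(u) \approx |V_1|$.

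The technical heart of the argument, and the main obstacle, is upgrading this near-bipartite structure to the exact one. One must show that (i) every $u \in V_2$ is joined to all of $V_1$, (ii) $V_2$ is genuinely an independent set, and (iii) $G[V_1]$ is $P_{2k}$-free and (nearly) $(k-1)$-regular. Step (i) is proven by an exchange/shifting argument: any missing cross-edge can be inserted at the cost of deleting a ``bad'' edge without creating a forbidden fan, contradicting extremality. Once (i) holds, (ii) follows because a single edge inside $V_2$ together with an endpoint as the center would combine with a long path obtainable from the near-complete bipartite structure to form a $P_1 \vee P_{2k}$. For (iii), once every $u \in V_2$ sees all of $V_1$, taking $u$ as the center forces $G[V_1]$ to be $P_{2k}$-free; then extremality and the Erd\H{o}s--Gallai bound on $P_{2k}$-free graphs force the degree sequence of $G[V_1]$ to be as regular as possible, giving exactly $\lfloor(k-1)n_1/2\rfloor$ inner edges. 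Finally, optimizing $n_1 n_2 + \lfloor(k-1)n_1/2\rfloor$ over $n_1 + n_2 = n$ with $n_1 \geq n_2$ gives the stated formula and pins the extremal family as $\mathcal{K}_{n_1,n_2}^{k-1}(P_{2k})$.
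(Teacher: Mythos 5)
A preliminary remark: the paper does not prove Theorem \ref{T1} at all; it is quoted from \cite{YLT,YLY}, so there is no in-paper argument to measure you against, and your proposal has to stand on its own. Your lower-bound half does: the parity argument that a $P_{2k}$ inside $N(v)$ for $v\in V_1$ would need at least $k$ vertices of $N_{V_1}(v)$, while $|N_{V_1}(v)|\le k-1$, is exactly right, and the case $v\in V_2$ is immediate (one should also note the family $\mathcal{K}_{n_1,n_2}^{k-1}(P_{2k})$ is nonempty, e.g.\ via disjoint copies of $K_k$ inside $V_1$, but that is minor).

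The upper bound, however, has a genuine gap precisely at what you call the technical heart, and the specific justifications you offer for steps (ii) and (iii) do not work. For (iii): the Erd\H{o}s--Gallai bound for $P_{2k}$-free graphs is $e(G[V_1])\le (k-1)n_1$, not $\lfloor\frac{(k-1)n_1}{2}\rfloor$; a disjoint union of cliques $K_{2k-1}$ inside $V_1$ is $P_{2k}$-free and attains $(k-1)n_1$, so ``extremality plus Erd\H{o}s--Gallai'' would only yield an upper bound of roughly $n_1n_2+(k-1)n_1$, strictly larger than the claimed value. What actually caps the inner degrees at $k-1$ is fan-freeness of $G$ itself: once every vertex of $V_2$ is complete to $V_1$, a vertex $v\in V_1$ with $k$ neighbours inside $V_1$ would have $N(v)$ containing a copy of $K_{k,k}$ (between $N_{V_1}(v)$ and $V_2$), hence a spanning path $P_{2k}$, making $v$ a fan center. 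This interplay, together with the bookkeeping that trades interior edges against missing cross edges to pin down the exact structure, is the real content of the theorem and is absent from your sketch. For (ii), the claimed mechanism is simply false: $K_{n_1,n_2}$ plus a single edge inside $V_2$ is $(P_1\vee P_{2k})$-free for $k\ge3$, since every neighbourhood then induces a graph whose longest path has at most $3$ vertices; so an edge in the smaller side does not ``combine with the near-complete bipartite structure'' to form a fan, and excluding such edges requires a counting or exchange argument, not a direct embedding. Step (i) is only asserted. In short, the easy containment is proved, but the exact-structure half --- which is where \cite{YLT,YLY} invest their stability-plus-induction machinery --- is not established by the proposal.
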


\begin{Theorem}\label{T3}\cite{YLY}
Let $k$ be an integer with $k\ge3$. For sufficiently large $n$, if $G\in \mathrm{Spex}(n, P_1\vee P_{2k})$, then $G\in \mathcal{K}_{\frac{n}{2}+\tau, \frac{n}{2}-\tau}^{k-1}(P_{2k})$ with $|\tau|\le1$.
\end{Theorem}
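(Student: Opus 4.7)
The plan is to let $G^* \in \mathrm{Spex}(n, P_1 \vee P_{2k})$ have spectral radius $\rho := \rho(G^*)$ and Perron eigenvector $x$ normalized so that $x_{u^*} = \max_v x_v = 1$ at some vertex $u^*$. The first step is a sharp lower bound on $\rho$. By Theorem~\ref{T1}, the family $\mathcal{K}^{k-1}_{\lceil n/2\rceil, \lfloor n/2\rfloor}(P_{2k})$ is non-empty and its members are $P_1 \vee P_{2k}$-free; computing the spectral radius of such a graph via its equitable bipartition quotient matrix yields $\rho \ge n/2 + (k-1)/2 + O(1/n)$.

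The second step is to reveal the near-bipartite structure of $G^*$ using the eigen-equation $\rho x_v = \sum_{w \sim v} x_w$. Introducing a small threshold $\eta = \eta(k) > 0$, set $L = \{v : x_v > \eta\}$ and $S = V(G^*) \setminus L$. Using the Tur\'{a}n-type bound $e(G^*) \le \mathrm{ex}(n, P_1 \vee P_{2k}) = n^2/4 + O(n)$ from Theorem~\ref{T1} together with $\rho \ge n/2 + \Omega(1)$, I would show that $|S|$ is bounded by a constant and that the total $x$-weight of $S$ is small. Bootstrapping the eigen-equation then gives $|L| = n/2 + O(1)$, and shows that almost every pair in $L \times S$ is an edge of $G^*$ while internal edges inside $L$ or inside $S$ are few.

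The third step pins down the exact structure via edge switching. Whenever an edge $vw$ of $G^*$ can be replaced by a non-edge $vw'$ without creating a $P_1 \vee P_{2k}$ and with $x_w < x_{w'}$, the Rayleigh quotient of $G^*$ strictly increases, contradicting maximality. Executing all profitable swaps shows that $G^*$ consists of a complete bipartite graph $K_{n_1, n_2}$ with parts $V_1, V_2$ plus a $P_{2k}$-free graph on the larger part $V_1$. By the edge-count in Theorem~\ref{T1}, this internal graph must have exactly $\lfloor (k-1) n_1 / 2 \rfloor$ edges and hence be $(k-1)$-regular or nearly so; thus $G^* \in \mathcal{K}^{k-1}_{n_1, n_2}(P_{2k})$. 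A final Rayleigh-quotient comparison across $(n_1, n_2)$ with $n_1 + n_2 = n$ then forces $|n_1 - n_2| \le 2$, i.e.\ $|\tau| \le 1$.

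The main obstacle will be the edge-switching step, since a naive swap can introduce a forbidden copy of $P_1 \vee P_{2k}$; one must combine the spectral gain with the Tur\'{a}n-type characterization of $\mathrm{Ex}(n, P_1 \vee P_{2k})$ from Theorem~\ref{T1} to guarantee that only $P_{2k}$-freeness-preserving moves are performed. A secondary technical point is the final balancing step: the dominant term $\sqrt{n_1 n_2}$ is maximized at $|n_1 - n_2| \le 1$, while the lower-order internal-edge contribution $(k-1)/2$ slightly favors larger $n_1$, and a careful asymptotic Rayleigh estimate that accounts for both is needed to conclude $|\tau| \le 1$ rather than a weaker bound.
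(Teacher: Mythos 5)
You are attempting to prove a result that the paper itself does not prove: Theorem~\ref{T3} is quoted from \cite{YLY}, and the closest internal analogue is the proof of Theorem~\ref{N2} for $t\ge 2$ in Section~4, which proceeds via the spectral stability lemma (Lemma~\ref{co}), a maximum cut $V_1\cup V_2$, the small exceptional sets $U$ and $W$, uniform eigenvector lower bounds, local switching, and quotient-matrix comparisons. Measured against that template, your sketch has a genuine gap in its second step. You propose to separate the graph into $L=\{v:x_v>\eta\}$ and $S=V\setminus L$ for a small constant threshold $\eta$ and to conclude $|S|=O(1)$, small weight on $S$, and $|L|=n/2+O(1)$ with almost-complete bipartiteness between $L$ and $S$. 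This cannot work: in the target extremal graphs ($K_{n_1,n_2}$ plus a $(k-1)$-regular graph on one side) \emph{all} Perron entries are $1-O(1/n)$ after normalization, so no constant threshold distinguishes the two sides and $L$ would simply be all of $V(G^*)$; moreover your own claims are mutually inconsistent, since $|S|=O(1)$ and $|L|=n/2+O(1)$ cannot both hold when $L\cup S=V(G^*)$. The near-bipartite structure is not a consequence of $e(G^*)\le n^2/4+O(n)$ together with $\rho\ge n/2+\Omega(1)$ alone; one needs a stability input such as Lemma~\ref{co} (Nikiforov, Desai et al.) applied to a maximum cut, which your outline never invokes, and then degree-based cleaning of the cut (the analogues of Claims~\ref{C23}--\ref{C25}) before any eigenvector or switching estimates are available.

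A secondary flaw is in your third step: you deduce that the graph induced on the larger part has exactly $\lfloor (k-1)n_1/2\rfloor$ edges ``by the edge-count in Theorem~\ref{T1}.'' A spectral-extremal graph need not be edge-extremal, so Theorem~\ref{T1} gives no such conclusion. The correct route is structural and spectral: once completeness between the parts is established, $P_1\vee P_{2k}$-freeness forces $G^*[V_1]$ to be $P_{2k}$-free with $\Delta(G^*[V_1])\le k-1$ (a vertex of internal degree $k$ together with $V_2$ already yields a forbidden fan), and then a Rayleigh/quotient-matrix comparison — as in Claims~\ref{C11}--\ref{C13} of the paper — shows that maximizing $\rho$ forces $(k-1)$-regularity or near-regularity and finally balances the parts to $|\tau|\le 1$. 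Your closing balancing remark is in the right spirit, but as written the argument rests on the unsupported Step~2 and on the edge-count conflation, so the proposal does not constitute a proof.
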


We prove the following results.

\begin{Theorem}\label{N1}
Let $t$, $k$ be two  integers with $t\ge1$ and $k\ge3$. For sufficiently large $n$,
\begin{align*}
\mathrm{ex}(n, t(P_1\vee P_{2k}))={t-1\choose 2}+(n_1+t-1)(n-t+1)+\left\lfloor\frac{(k-1)n_1}{2}\right\rfloor-n_1^2
%&\quad \mathrm{ex}(n, t(P_1\vee P_{2k}))\\
%&=\begin{cases}
%{t-1\choose 2}+(n_1+t-1)(n-t+1)+\frac{(k-1)n_1}{2}-n_1^2  & \mbox{if $(k-1)n_1$ is even},\\[3mm]
%{t-1\choose 2}+(n_1+t-1)(n-t+1)+\frac{(k-1)n_1}{2}-\frac{1}{2}-n_1^2  & \mbox{otherwise}
%\end{cases}
\end{align*}
and
\[
\mathrm{Ex}(n, t(P_1\vee P_{2k}))=\{K_{t-1}\vee H: H\in \mathcal{K}_{n_1,n-n_1-t+1}^{k-1}(P_{2k})\},
\]
where
\[
n_1=\begin{cases}
\left\lfloor\frac{n-t+1}{2}+\frac{k-1}{4}\right\rfloor & \mbox{if~}2n-2t+k\equiv 0 \!\!\pmod{4},\\[3mm]
\left\lceil\frac{n-t+1}{2}+\frac{k-1}{4}\right\rceil & \mbox{otherwise}.
\end{cases}
\]
\end{Theorem}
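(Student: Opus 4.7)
The lower bound follows from a construction. Let $G^{\ast} = K_{t-1}\vee H$ where $H\in \mathcal{K}_{n_1,n-n_1-t+1}^{k-1}(P_{2k})$ with $n_1$ as prescribed. I would verify $G^{\ast}$ is $t(P_1\vee P_{2k})$-free: in any $t$ vertex-disjoint copies of $P_1\vee P_{2k}$, the $t-1$ vertices of $V(K_{t-1})$ must be distributed among at most $t-1$ copies, so by pigeonhole at least one copy lies entirely in $V(H)$, contradicting $H$ being $P_1\vee P_{2k}$-free by Theorem~\ref{T1}. Counting $e(G^{\ast}) = \binom{t-1}{2} + (t-1)(n-t+1) + e(H)$ and maximizing $e(H)$ via Theorem~\ref{T1} with $|V(H)|=n-t+1$ produces the claimed closed form; optimizing $n_1n_2 + \lfloor (k-1)n_1/2\rfloor$ subject to $n_1+n_2 = n-t+1$ yields precisely the two parity cases in the definition of $n_1$.

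The upper bound I would prove by induction on $t$. The base case $t=1$ is Theorem~\ref{T1}. For $t\ge 2$, let $G\in \mathrm{Ex}(n, t(P_1\vee P_{2k}))$. The key step is the \emph{dominating-vertex claim}: there exists $v\in V(G)$ with $d_G(v)=n-1$. Granting this, set $G' = G-v$. I would argue $G'$ is $(t-1)(P_1\vee P_{2k})$-free: otherwise $G'$ contains $t-1$ pairwise disjoint fans on a set $W$ with $|W|=(t-1)(2k+1)$ (a constant depending only on $t,k$); the subgraph $G'-W$ has $n-1-|W|$ vertices and inherits $e(G) - (n-1) - |W|(n-1)$ edges from $G$, which is of order $\Theta(n^2)$ by extremality and hence far exceeds the Erdős--Gallai bound $(k-1)(n-1-|W|)$ for $P_{2k}$-free graphs, so $G'-W$ contains a $P_{2k}$; joined to $v$ (adjacent to every vertex of $V(G')$), this $P_{2k}$ yields a $t$-th fan disjoint from the other $t-1$, contradicting $t$-freeness. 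Thus $G'\in \mathrm{Ex}(n-1, (t-1)(P_1\vee P_{2k}))$, so by induction $G' = K_{t-2}\vee H'$ with $H'\in \mathcal{K}_{n_1', n-n_1'-t+1}^{k-1}(P_{2k})$, and hence $G = \{v\}\vee G' = K_{t-1}\vee H'$. A direct check of the recursion on the closed-form edge counts (the increment $\binom{t-1}{2} - \binom{t-2}{2} + (n-t+1) = n-1$ matches $d_G(v)$) forces $n_1' = n_1$, giving the announced extremal set.

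The main obstacle is the dominating-vertex claim itself. I would approach it by contradiction: assume $\Delta(G) \le n-2$. Extremality pins $e(G)$ at the explicit lower-bound value, whose leading term $\sim n^2/4$ together with the characterization of near-extremal $P_1\vee P_{2k}$-free graphs as near-balanced complete bipartite graphs with a small regular overlay (Theorem~\ref{T1}) should force an approximate $K_{t-1}\vee K_{a,b}$ structure on $G$ with $|a-b|\le 1$, and in particular produce $t-1$ vertices $u_1,\ldots,u_{t-1}$ of degree at least $n - C(t,k)$ for some constant $C$. I would then perform an edge-swap argument: for a near-universal $u_i$ and a non-neighbor $w$, compare $G$ with the graph obtained by adding $u_iw$ and deleting a carefully chosen edge incident to $w$. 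Under the assumption that no vertex has degree $n-1$, the swap should be feasible without creating a new $t$ vertex-disjoint fans (using that any such fan would have to be centered at $u_i$ and use $w$ on its path, whose local $P_{2k}$-structure is controlled), yielding a $t$-free graph with strictly more edges and contradicting extremality. Verifying that no forbidden configuration is introduced and that a legitimate edge to delete always exists is the delicate combinatorial heart of the argument; I expect this step, leaning on the tight inductive control from Theorem~\ref{T1} and on the Erdős--Gallai bound for paths, to absorb most of the technical effort.
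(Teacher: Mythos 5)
Your lower-bound construction and your reduction scheme are fine: the pigeonhole argument that $K_{t-1}\vee H$ is $t(P_1\vee P_{2k})$-free is correct, and, \emph{granted} a vertex $v$ of degree $n-1$, your deduction that $G-v$ is $(t-1)(P_1\vee P_{2k})$-free (via the Erd\H{o}s--Gallai bound applied to $G-v-W$) and the edge-count recursion $f(n,t)-f(n-1,t-1)=n-1$ do pin down the extremal family exactly as in the paper, which also passes through this same dominating-vertex step (its Claim on a vertex $v$ with $G[N(v)]\supseteq K_{t(2k+1),t(2k+1)}$ forces $d_G(v)=n-1$ and $G-v\in \mathrm{Ex}(n-1,(t-1)(P_1\vee P_{2k}))$).

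The genuine gap is that the dominating-vertex claim --- which you correctly identify as the heart of the matter --- is not proved, and the route you sketch for it does not work as stated. First, Theorem~\ref{T1} is an exact extremal result, not a stability theorem, and it applies to $(P_1\vee P_{2k})$-free graphs; your $G$ is not $(P_1\vee P_{2k})$-free (it only avoids $t$ disjoint copies), so Theorem~\ref{T1} gives no structural information about $G$ or about ``near-extremal'' graphs, and some genuine stability input is needed. Second, even an approximate $K_{t-1}\vee K_{a,b}$ structure (say, within $o(n^2)$ edge edits) does not produce $t-1$ vertices of degree $n-C(t,k)$; high-degree vertices require a separate argument. Third, your edge-swap as described (add $u_iw$, delete an edge at $w$) does not change the edge count, so it cannot contradict extremality, and you give no mechanism ruling out that the swap creates $t$ disjoint fans --- which is precisely the difficulty. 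The paper fills this gap with machinery you do not have a substitute for: it finds a large $K_{N,N}$ via the K\H{o}vari--S\'os--Tur\'an bound (Lemma~\ref{T2}), runs Simonovits' progressive induction (Lemma~\ref{ind}) to get a minimum-degree bound $\delta(G)\ge n/2$, partitions $V(G)$ into $X\cup Y\cup X_1\cup Y_1\cup Z$ with controlled sizes, and proves a separate induction-on-$t$ lemma (Lemma~\ref{L1}) under a common-neighborhood hypothesis, from which the dominating vertex and the exact structure follow. Without an argument of comparable strength for the dominating-vertex claim, your proof is incomplete at its decisive step.
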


\begin{Theorem}\label{N2}
Let $t$, $k$ be two integers with $t\ge1$ and $k\ge3$. For sufficiently large $n$,
let $G\in \mathrm{Spex}(n, t(P_1\vee P_{2k}))$.
% and $n_1=\frac{n-t+1}{2}+r$,
%where, if $k$ is odd, then
%\[
%r=\begin{cases}
%0 & \mbox{if $n-t$ is odd},\\
%\frac{1}{2} & \mbox{otherwise}
%\end{cases}
%\]
%and if $k$ is even, then
%\[
%r=\begin{cases}
%0 & \mbox{if $n-t\equiv 3\!\!\pmod{4}$},\\
%-\frac{1}{2} & \mbox{if  $n-t\equiv 0\!\!\pmod{4}$},\\
%1 & \mbox{if $n-t\equiv 1\!\!\pmod{4}$},\\
%\frac{1}{2} & \mbox{if $n-t\equiv 2\!\!\pmod{4}$}.
%\end{cases}
%\]
Then
$G=K_{t-1}\vee H$ with $H\in \mathcal{K}_{n_1,n-n_1-t+1}^{k-1}(P_{2k})$,
where if $t=1$, then
\[
n_1=
\begin{cases}
\frac{n}{2} & \mbox{if $k$ is odd and $n$ is even, or $k$ is even and $n\equiv 0\!\!\pmod{4}$},\\
\frac{n\pm1}{2} & \mbox{if $k$ and $n$ are both odd},\\
\frac{n-1}{2} & \mbox{if $k$ is even and $n\equiv 1\!\!\pmod{4}$},\\
\frac{n+1}{2} & \mbox{if $k$ is even and $n\equiv 3\!\!\pmod{4}$},\\
\frac{n}{2}\pm1 & \mbox{if $k$ is even and $n\equiv 2\!\!\pmod{4}$}.
\end{cases}
\]
%$n_1=\frac{n}{2}$ if $n$ is even, and
%\[
%n_1=
%\begin{cases}
%\frac{n\pm 1}{2} & \mbox{if $k$ and $n$ are both odd},\\
%\frac{n-1}{2} & \mbox{if $k$ is odd and $n\equiv 1\!\!\pmod{4}$},\\
%\frac{n+1}{2} & \mbox{if $k$ is odd and $n\equiv 3\!\!\pmod{4}$},
%\end{cases}
%\]
and if $t\ge 2$, then
\[
n_1=
\begin{cases}
\left\lceil\frac{n-t+1}{2}\right\rceil & \mbox{if $k$ is odd, or
if $k$ is even and $n-t\equiv 2,3\!\!\pmod{4}$},\\
\left\lfloor\frac{n-t+1}{2}\right\rfloor & \mbox{if $k$ is even and $n-t\equiv 0\!\!\pmod{4}$},\\
\left\lceil\frac{n-t+2}{2}\right\rceil & \mbox{if $k$ is even and $n-t\equiv 1\!\!\pmod{4}$}.
\end{cases}
\]
\end{Theorem}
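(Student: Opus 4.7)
The plan is to peel off a dominating clique of size $t-1$ via a Perron-vector swapping argument, reducing the problem to the $t=1$ case of Theorem \ref{T3}. Let $G\in\mathrm{Spex}(n,t(P_1\vee P_{2k}))$, let $\mathbf{x}$ be its Perron eigenvector normalized so that $\max_v x_v=1$, and write $\rho=\rho(G)$. First I would exhibit a specific graph $G^*=K_{t-1}\vee H^*$ with $H^*\in\mathcal{K}_{n_1,n_2}^{k-1}(P_{2k})$; such a $G^*$ is $t(P_1\vee P_{2k})$-free, because every copy of $P_1\vee P_{2k}$ must use a vertex of degree at least $2k+1$ as its hub, and after all $t-1$ vertices of the $K_{t-1}$ have played that role, the remaining $H^*$ is itself $(P_1\vee P_{2k})$-free. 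A quotient-matrix computation then yields $\rho(G^*)=\tfrac{n+t-2}{2}+\Theta(1)$, which becomes the working lower bound for $\rho$.

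Order the vertices so that $x_{v_1}\ge\cdots\ge x_{v_n}$ and set $S=\{v_1,\dots,v_{t-1}\}$. Using the eigenvalue equation $\rho x_{v_i}=\sum_{u\sim v_i}x_u$ together with the lower bound on $\rho$, I would show that $d_G(v_i)\ge n-C$ for every $v_i\in S$, where $C=C(t,k)$ is constant. Then the standard swap --- for any non-edge $v_iv_j$ with $v_i\in S$, delete a ``light'' edge $uw$ in $V(G)\setminus S$ and add $v_iv_j$ --- increases the Rayleigh quotient strictly, and one verifies that the modified graph remains $t(P_1\vee P_{2k})$-free by arguing that any newly created copy must use the new edge $v_iv_j$ in one of the fans, after which recentering that fan at $v_i$ (which is almost-universal) exhibits a copy already present in $G$. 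Maximality of $\rho(G)$ then forces $G=K_{t-1}\vee H$ with $H$ a graph on $n-t+1$ vertices that is $(P_1\vee P_{2k})$-free, since otherwise the $t-1$ vertices of $S$ together with such a copy would give $t$ vertex-disjoint forbidden subgraphs.

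At this point the problem reduces to maximizing $\rho(K_{t-1}\vee H)$ over all $(P_1\vee P_{2k})$-free graphs $H$ on $n-t+1$ vertices. Adapting the proof of Theorem \ref{T3} to this setting --- where the entries of $\mathbf{x}$ on $V(H)$ still satisfy a weighted eigenvalue equation with an additive contribution coming from $S$ --- I would deduce that $H\in\mathcal{K}_{n_1,n-n_1-t+1}^{k-1}(P_{2k})$ for some bipartition sizes $n_1,n_2$. The precise value of $n_1$ is then pinned down by writing out the $3\times 3$ quotient matrix associated with the equitable partition $\{S,V_1,V_2\}$, expanding its characteristic polynomial, and comparing $\rho$ for consecutive candidates $n_1$ and $n_1\pm 1$; the sign of the resulting difference depends on $n$, $t$, $k$ modulo $4$, which produces the case distinctions in the statement.

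The main obstacle is the second step: the edge-switching move must simultaneously strictly increase the Rayleigh quotient and preserve $t(P_1\vee P_{2k})$-freeness. The latter is subtle because the forbidden graph is a join rather than a path or a cycle, so naive edge-counting arguments are not enough; one must carefully reassign the role of ``center'' in each potentially offending fan to a vertex of $S$, whose near-universality makes the reassignment possible, in order to trace any putative new bad configuration back to one already present in $G$. The subsequent optimization of $n_1$ via the quotient matrix is technically involved but amounts to a routine case analysis once the structural reduction is in place.
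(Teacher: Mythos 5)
There is a genuine gap at the heart of your plan: the claim that, for the $t-1$ vertices $v_1,\dots,v_{t-1}$ of largest Perron entry, the eigenvalue equation together with $\rho\ge\frac{n+t-2}{2}+\Theta(1)$ yields $d_G(v_i)\ge n-C$. It does not. The equation $\rho x_{v_i}=\sum_{u\sim v_i}x_u\le d_G(v_i)\max_u x_u$ gives at best $d_G(v_1)\ge\rho\approx n/2$ for the single top vertex (where $x_{v_1}=1$), and nothing of the required strength for the others, since a priori $x_{v_2},\dots,x_{v_{t-1}}$ need not be close to $1$ and $\rho$ is only about $n/2$. Identifying $t-1$ universal (or near-universal) vertices is precisely the technical core of the theorem, and the paper has to earn it through a long chain: a spectral stability lemma (Lemma~\ref{co}) giving $e(G)\ge n^2/4-O(\eta^2n^2)$ and a max-cut bipartition with nearly equal parts; the exceptional sets $U$ (low degree) and $W$ (large degree inside its own part) with $|U|\le\frac{1}{2}\eta n$, $|W|<\frac{1}{4}\eta n$; the bound $|W\setminus U|\le t-1$ via fan-building (Claims~\ref{C25}, \ref{C26}, \ref{C266}); the reattachment operation and $U=\emptyset$ (Claims~\ref{C277}, \ref{C27}); near-uniformity of all eigenvector entries (Claim~\ref{C28}); and only then $d_G(v)=n-1$ for all $v\in W$ and $|W|=t-1$ (Claim~\ref{C29}). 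Your proposal compresses all of this into one asserted sentence, so the reduction to $G=K_{t-1}\vee H$ is not established.

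Two further points. First, your edge-switching step has the same dependency problem you flag yourself: to show the switched graph is still $t(P_1\vee P_{2k})$-free you need to rebuild a fan avoiding a prescribed vertex set, and ``recentering at $v_i$'' only works once you know $v_i$ (and the path vertices) have many common neighbours in the opposite part outside the other $t-1$ fans --- exactly the information supplied by the stability structure (Claims~\ref{C25}--\ref{C26}), not by near-universality of $v_i$ alone; also strictness of the Rayleigh-quotient increase needs the eigenvector-entry control of Claim~\ref{C28}. Second, the final determination of $n_1$ is not merely a comparison of quotient-matrix eigenvalues for consecutive $r$: when $(k-1)n_1$ is odd the inner graph can only be nearly $(k-1)$-regular, so $\rho(G)$ is strictly below $\rho(A_r)$, and the paper must quantify this loss (the correction $c'\approx\frac{1}{4n}$ and the characteristic-polynomial estimates in Cases 2.2--2.3 of Claim~\ref{Fca}) to decide between a regular configuration at an unbalanced $r$ and a nearly regular one at a balanced $r$; this parity--regularity interplay is what produces the mod $4$ answer and is skipped in your sketch. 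The overall architecture (reduce to $K_{t-1}\vee H$, then balance via a $3\times3$ quotient matrix) matches the paper, but as written the proposal leaves the main structural step unproved.
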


For odd $k\ge 3$ and $t\ge1$,  the extremal graphs in the previous two theorem
coincide if and only if $k=3$ and $n-t$ is even. For even $k\ge4$ and $t\ge1$, the extremal graphs in the previous two theorem
coincide if and only if $k=4$ and $n-t\equiv 1, 2\pmod{4}$, or $k=6$ and $n-t\equiv 1\pmod{4}$.

%For odd $k\ge 3$ and $t\ge1$,  the extremal graphs in the previous two theorem
%coincide if and only if $k=3$ and $n-t$ is even. For even $k$, the extremal graphs in the previous two theorem
%coincide if and only if $t\ge1$, $k=4$ and $n-t\equiv 2\pmod{4}$, or $t\ge2$, $k=4$ and $n-t\equiv 1\pmod{4}$, or $t\ge2$, $k=6$ and $n-t\equiv 1\pmod{4}$.
%

\section{Preliminaries}

%\begin{Lemma}
%
%$\mathrm{ex}(n, P_1\vee P_4)=\lfloor\frac{n^2+n}{4}\rfloor$.
%\end{Lemma}

For a graph $G$ with $u\in V(G)$, we denote by $N_G(u)$ and $d_G(u)$ the neighborhood and the degree of $u$ in $G$, respectively.
Let $\delta(G)$ and $\Delta(G)$ be the minimum and maximum degree in $G$, respectively.
For $\emptyset\ne U\subset V(G)$, let $G-U$ be the graph obtained from $G$ be removing the vertices in $U$ (and every edge incident to some vertex of $U$), if $U=\{u\}$, then we write $G-u$ for $G-\{u\}$.
And $G[U]$ denote the graph $G-(V(G)\setminus U)$. For $E_1\subseteq E(G)$, $G-E_1$ denotes the spanning subgraph of $G$ with edge set $E(G)\setminus E_1(G)$. If $E_1'$ is a subset set of the edge set of the complement of $G$, then $G+E_1'$ denotes the graph with $G$ being a spanning subgraph and edge set $E(G)\cup E_1'$. If $E'_1=\{uv\}$, then we write $G+uv$ for $G+\{uv\}$.
Let $K_n$ be the complete graph of order $n$. The vertex of degree $2k$ in $P_1\vee P_{2k}$ is its center.

\begin{Lemma}\label{T2}\cite{KST}
Let $n$, $a$, $b$ be three positive integers.  Then for some constant $c_{a,b}$,
\[
%\mathrm{ex}(n, K_{a, b})\le\frac{\sqrt[a]{b-1}}{2}n^{2-\frac{1}{a}}+\frac{a-1}{2}n.
\mathrm{ex}(n, K_{a, b})\le c_{a,b}n^{2-\frac{1}{a}}.
\]
\end{Lemma}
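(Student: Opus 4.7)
The plan is to apply the classical Kővári–Sós–Turán double-counting argument. Let $G$ be a $K_{a,b}$-free graph with $n$ vertices and $m = e(G)$ edges, and count in two ways the number $T$ of pairs $(S, v)$ with $S \subseteq V(G)$, $|S| = a$, and $v \in V(G)$ adjacent to every vertex of $S$ (automatically $v \notin S$ since $G$ is simple).

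Counting $T$ by the choice of $v$ yields
\[
T = \sum_{v \in V(G)} \binom{d_G(v)}{a},
\]
while counting by the choice of $S$ and using that for each $a$-set $S$ its common neighborhood $\bigcap_{u \in S} N_G(u)$ must have size at most $b-1$ (otherwise $S$ together with any $b$ of its common neighbors forms a copy of $K_{a,b}$ in $G$, contradicting $K_{a,b}$-freeness) yields
\[
T \le (b-1)\binom{n}{a}.
\]
Combining these and invoking convexity of $\binom{x}{a}$ via Jensen's inequality applied to the degree sequence with mean $\bar d = 2m/n$, we obtain $n\binom{2m/n}{a} \le (b-1)\binom{n}{a}$ provided $\bar d \ge a-1$. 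Using $\binom{\bar d}{a} \ge (\bar d - a+1)^a/a!$ on the left and $\binom{n}{a} \le n^a/a!$ on the right, then taking $a$-th roots, this rearranges to
\[
m \le \tfrac{1}{2}\bigl((b-1)^{1/a} n^{2-1/a} + (a-1) n\bigr) \le c_{a,b}\, n^{2-1/a}
\]
for a suitable constant $c_{a,b}$ depending only on $a$ and $b$.

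Finally, the complementary regime $\bar d < a-1$ gives $m < n(a-1)/2$, which is already $O(n) = o(n^{2-1/a})$ and so is absorbed into the constant. No serious obstacle arises: the argument is purely elementary, and the only mild care points are handling this small-degree regime and the convention $K_{a,b} = K_{b,a}$ which makes the asymmetric right-hand count $(b-1)\binom{n}{a}$ legitimate for either ordering of $a$ and $b$.
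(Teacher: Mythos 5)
Your argument is correct: it is the classical K\H{o}v\'ari--S\'os--Tur\'an double counting of pairs $(S,v)$ with $|S|=a$, together with convexity and the separate treatment of the low average-degree regime, and the algebra leading to $m\le\tfrac{1}{2}\bigl((b-1)^{1/a}n^{2-1/a}+(a-1)n\bigr)$ is sound. The paper does not prove this lemma but cites it from [KST], and your proof is essentially the standard argument from that source, so there is nothing further to reconcile.
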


%\begin{Lemma}\label{sp}\cite{YLT}
%Let $n\ge 2k$, then $ex\left(n,\{K_{1,k}, P_{2k+1}\}\right)=\left\lfloor\frac{(k-1)n}{2}\right\rfloor$.
%\end{Lemma}

For integer $r\ge 2$, $n_1,\dots, n_r\ge 1$, we denote by $K_{n_1,\dots, n_r}$ the complete $r$-partite graphs with classes of size $n_1,\dots, n_r$. If $n_1+\dots+n_r=n$ and $\max\{n_1,\dots, n_r\}-\min\{n_1,\dots, n_r\}=0,1$, then it is known as the Tur\'an graph, denoted by $T_{n,r}$.
Let $r\ge2$, $\frac{1}{\ln c}<c<r^{-8(r+21)(r+1)}$, $0<\epsilon<2^{-36}r^{-24}$ and let $G$ be a graph with order $n$.  Nikiforov \cite{VN} showed that
if $\rho(G)>(1-\frac{1}{r}-\epsilon)n$, then one of the following holds:
(i) $G$ contains a complete $r+1$-partite graph $K_{\lfloor c\ln n\rfloor, \dots, \lfloor c\ln n\rfloor, \lceil n^{1-\sqrt{c}}\rceil}$;
(ii) $G$ differs from $T_{n, r}$ in fewer than $(\epsilon^{\frac{1}{4}}+c^{\frac{1}{8r+8}})n^2$ edges.
From this result, Desai et al. deduced the following useful lemma.

\begin{Lemma}\label{co}\cite{DK}
Let $F$ be a graph with chromatic number $\chi(F)=r+1$. For every $\epsilon>0$, there exist $\delta>0$ and $n_0$ such that if $G$ is an $F$-free graph with order $n\ge n_0$ and $\rho(G)\ge(1-\frac{1}{r}-\delta)n$, then $G$ can be obtained from $T_{n, r}$ by adding and deleting at most $\epsilon n^2$ edges.
\end{Lemma}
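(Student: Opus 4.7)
The plan is to derive Lemma \ref{co} directly from the dichotomy theorem of Nikiforov quoted immediately above it. The key structural fact I will exploit is that every graph $F$ with $\chi(F) = r+1$ embeds as a subgraph in every sufficiently large complete $(r+1)$-partite graph; this rules out alternative (i) of Nikiforov's theorem whenever the host graph is $F$-free, forcing alternative (ii).

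First, given $\varepsilon > 0$ and $r \ge 2$, I would fix the constants in the order demanded by Nikiforov's hypotheses. Pick $c>0$ small enough that $c < r^{-8(r+21)(r+1)}$ and $c^{1/(8r+8)} < \varepsilon/2$. Then pick $\delta>0$ with $\delta < 2^{-36} r^{-24}$ and $\delta^{1/4} < \varepsilon/2$; this $\delta$ will be the constant asserted by the lemma. Finally, set $f = |V(F)|$ and choose $n_0$ large enough that $\lfloor c\ln n_0\rfloor \ge f$ and $\lceil n_0^{1-\sqrt{c}}\rceil \ge f$ (and so that $n_0$ clears whatever threshold Nikiforov's theorem implicitly requires for the asymptotic guarantee). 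Note also that the side condition $1/\ln c < c$ listed in the excerpt is automatic once $c<1$, since then $\ln c < 0$.

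Now suppose $G$ is an $F$-free graph with $|V(G)| = n \ge n_0$ and $\rho(G) \ge (1 - 1/r - \delta)n$. Applying Nikiforov's theorem with parameters $c$ and $\delta$, we land in one of two cases. In case (i), $G$ contains a copy of the complete $(r+1)$-partite graph $K_{\lfloor c\ln n\rfloor, \dots, \lfloor c\ln n\rfloor, \lceil n^{1-\sqrt{c}}\rceil}$. Since $\chi(F) = r+1$, a proper $(r+1)$-coloring partitions $V(F)$ into independent sets $V_1, \dots, V_{r+1}$, each of size at most $f$, so $F$ is a subgraph of $K_{f,\dots,f}$ with $r+1$ parts. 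By our choice of $n_0$, each part of the $(r+1)$-partite subgraph appearing in (i) has size at least $f$, hence $F \subseteq G$, contradicting the $F$-freeness of $G$. Therefore case (ii) must hold, and $G$ differs from $T_{n,r}$ in fewer than $(\delta^{1/4} + c^{1/(8r+8)})n^2 < \varepsilon n^2$ edges, which is exactly the conclusion of the lemma.

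Since the argument is a short chain of implications from a black-box theorem, no single step is a serious obstacle. The only item that takes any care is the order of quantifiers when selecting the constants: $c$ must be fixed first (so that the exponent $1-\sqrt{c}$ and the inequality $c^{1/(8r+8)} < \varepsilon/2$ are pinned down), then $\delta$ is chosen so that $\delta^{1/4} < \varepsilon/2$, and finally $n_0$ is chosen large enough that both $\lfloor c\ln n\rfloor$ and $\lceil n^{1-\sqrt{c}}\rceil$ dominate $f = |V(F)|$ for all $n \ge n_0$.
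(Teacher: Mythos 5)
Your proposal is correct and follows exactly the route the paper indicates: the paper does not prove this lemma itself but cites it from Desai et al., noting that it is deduced from Nikiforov's dichotomy theorem quoted just above, which is precisely your argument (rule out alternative (i) by embedding the $(r+1)$-chromatic graph $F$ into the large complete $(r+1)$-partite subgraph, then read off alternative (ii) with $\delta^{1/4}+c^{1/(8r+8)}<\epsilon$). The only cosmetic point is the strict versus non-strict inequality in $\rho(G)\ge(1-\frac{1}{r}-\delta)n$ when invoking Nikiforov's hypothesis $\rho(G)>(1-\frac{1}{r}-\epsilon)n$, which is fixed by taking the $\delta$ you output slightly smaller than the parameter fed into Nikiforov's theorem.
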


\begin{Lemma}\label{cap}\cite{CF}
Let $A_1, \dots, A_n$ be finite sets. Then
\[
|A_1\cap\dots \cap A_n|\ge\sum_{i=1}^n|A_i|-(n-1)\left|\bigcup_{i=1}^n A_i\right|.
\]
\end{Lemma}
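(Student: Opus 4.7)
The plan is to establish this elementary inequality by a simple double counting argument on the elements of the union. Set $U = \bigcup_{i=1}^n A_i$, and for each $x \in U$ let $f(x) = |\{i : x \in A_i\}|$ be the number of sets containing $x$. I would begin from the identity
\[
\sum_{i=1}^n |A_i| = \sum_{x \in U} f(x),
\]
obtained by interchanging the order of summation.

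Next, I would partition $U$ into the intersection $I = \bigcap_{i=1}^n A_i$ and its complement $U \setminus I$. On $I$ we have $f(x) = n$, while on $U \setminus I$ we have $f(x) \le n-1$, since any such $x$ must be missing from at least one of the $A_i$. Summing these estimates gives
\[
\sum_{i=1}^n |A_i| \;\le\; n |I| + (n-1)\bigl(|U| - |I|\bigr) \;=\; |I| + (n-1)|U|,
\]
which rearranges to the required bound $|I| \ge \sum_{i=1}^n |A_i| - (n-1)|U|$.

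An equivalent route would be induction on $n$. The case $n=1$ is trivial. For the induction step, apply the standard identity $|X \cap Y| = |X| + |Y| - |X \cup Y|$ with $X = \bigcap_{i=1}^{n-1} A_i$ and $Y = A_n$, bound $|X|$ via the inductive hypothesis, and use $|X \cup Y| \le |\bigcup_{i=1}^n A_i|$ together with some bookkeeping to collect the coefficient of the union term.

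There is no substantive obstacle here; the statement is a basic counting inequality. The only point to be careful about is tracking the coefficient of $|U|$: the bound $f(x) \le n-1$ on $U \setminus I$ is tight (each missed index costs exactly one), which is precisely why the factor $(n-1)$ appears and cannot be improved in general.
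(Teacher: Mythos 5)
Your double counting argument is correct: writing $\sum_{i=1}^n |A_i| = \sum_{x\in U} f(x)$, splitting $U$ into $I=\bigcap_i A_i$ (where $f(x)=n$) and $U\setminus I$ (where $f(x)\le n-1$), and rearranging gives exactly the stated bound, and the inductive alternative via $|X\cap Y|=|X|+|Y|-|X\cup Y|$ also goes through. The paper itself offers no proof of this lemma (it is quoted from the cited reference), so there is nothing to compare against; your argument is a complete and standard justification of the inequality.
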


Let $\nu(G)$ be the matching number of a graph $G$, which is the number of edges in a maximum matching.

\begin{Lemma}\label{MD}\cite{CH}
Let $\nu$ and $\Delta$ be positive integers. For sufficiently large $n$, the number of edges in a graph of order $n$ with $\Delta(G)\le \Delta$ and  $\nu(G)\le \nu$ is at most
$\nu(\Delta+1)$.
\end{Lemma}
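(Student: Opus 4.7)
The plan is to deduce the bound directly from Vizing's theorem on edge colorings, which incidentally makes the ``sufficiently large $n$'' hypothesis unnecessary for this upper bound. Since $\Delta(G)\le\Delta$, Vizing's theorem gives a proper edge coloring of $G$ using at most $\Delta+1$ colors; equivalently, $E(G)$ decomposes as $M_1\cup\cdots\cup M_{\Delta+1}$, where each $M_i$ is a matching in $G$ (some possibly empty).

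The argument then takes three short steps. First, apply Vizing's theorem to obtain the decomposition above. Second, since each $M_i$ is itself a matching in $G$, we have $|M_i|\le\nu(G)\le\nu$. Third, sum over $i$ to get
\[
e(G)=\sum_{i=1}^{\Delta+1}|M_i|\le(\Delta+1)\nu,
\]
which is exactly the claimed bound.

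The only real ``obstacle'' is invoking Vizing's theorem; no intricate counting is needed. The hypothesis ``for sufficiently large $n$'' is not actually used by this argument and is inherited from the sharper Chv\'atal--Hanson identity $\nu\Delta+\lfloor\Delta/2\rfloor\lfloor\nu/\lceil\Delta/2\rceil\rfloor$, whose matching extremal construction (disjoint near-copies of $K_{\Delta+1}$ packed together) requires enough vertices to realize. A self-contained route avoiding Vizing would start from a maximum matching $M$ and use that $V(G)\setminus V(M)$ is independent, so every edge meets $V(M)$, then charge each edge to an endpoint in $V(M)$ and exploit $\Delta(G)\le\Delta$; this involves strictly more delicate bookkeeping than the Vizing approach, so I would prefer the latter.
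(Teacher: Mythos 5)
Your argument is correct: Vizing's theorem gives a proper edge coloring of the simple graph $G$ with at most $\Delta(G)+1\le\Delta+1$ colors, each color class is a matching of size at most $\nu(G)\le\nu$, and summing yields $e(G)\le\nu(\Delta+1)$. The paper itself offers no proof of this lemma; it simply cites Chv\'atal--Hanson, whose theorem determines the exact maximum $\nu\Delta+\lfloor\Delta/2\rfloor\lfloor\nu/\lceil\Delta/2\rceil\rfloor$, from which the stated bound follows because $\lfloor\Delta/2\rfloor\lfloor\nu/\lceil\Delta/2\rceil\rfloor\le\lceil\Delta/2\rceil\cdot\nu/\lceil\Delta/2\rceil=\nu$. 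So your route is genuinely different and in fact more economical for the purpose at hand: it proves exactly the weak bound the paper uses, it is self-contained modulo one standard theorem rather than a full extremal characterization, and, as you correctly observe, it dispenses with the ``sufficiently large $n$'' hypothesis, which plays no role in the upper bound (it matters only for the sharpness of the Chv\'atal--Hanson value, which the paper never needs). What the citation-based approach buys instead is the exact extremal function and the description of extremal graphs, neither of which is used in the application (Claim 4.13, where only an $O(1)$ bound on $e(G[V_2''])$ is required). One small remark: your sketched Vizing-free alternative (charging edges to endpoints of a maximum matching) naively gives only $e(G)\le 2\nu\Delta-\nu$, so your preference for the Vizing route is well founded.
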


For a nonnegative square matrix $M$, denote by $\rho(M)$ the spectral radius of $M$.

For an $n\times n$ matrix $A$, whose rows and columns are indexed by elements in $X=\{1, \dots, n\}$,
let $\pi=\{X_1, \dots, X_s\}$ be a partition of $X$, and let $A_{ij}$ be the submatrix (block) of $A$ whose rows and columns are indexed by elements of $X_i$ and $X_j$ for $1\le i,j\le s$.
The characteristic matrix
$S$ is the $n\times s$ matrix whose $(i,j)$-entry is $1$ or $0$ according as $i\in X_j$ or not, where $1\le i\le n$ and $1\le j\le s$.
The quotient matrix of $A$ is the $s\times s$ matrix, whose $(i,j)$-entry is the average row sums of $A_{ij}$, where $1\le i,j\le s$.
Moreover, if the row sum of each  $A_{ij}$ is a constant, then we call the partition $\pi$ is equitable, and in this case $AS=SB$.

For a square nonnegative matrix  $M$, we denote by $\rho(M)$ the spectral radius of $M$.
So, for a graph $G$, $\rho(G)=\rho(A(G))$.

\begin{Lemma}\label{QM}\cite{BH}
Let $A$ be a real symmetric matrix. Then the spectrum of the quotient matrix $B$ of $A$
corresponding to an equitable partition is contained in the spectrum of $A$.
Furthermore, if $A$ is nonnegative and irreducible, then $\rho(A)=\rho(B)$, and
if $\mathbf{x}$ is a eigenvector of $B$ corresponding to $\rho(B)$, then $S\mathbf{x}$ is an eigenvector of $A$ for $\rho(A)$.
\end{Lemma}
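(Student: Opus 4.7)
\textbf{Proof plan for Lemma \ref{QM}.} The plan is to reduce everything to the single matrix identity $AS=SB$, which is the algebraic content of equitability, and then invoke Perron--Frobenius for the irreducible case. First I would verify $AS=SB$ directly: for $u\in X_i$ and $1\le j\le s$, the entry $(AS)_{u,j}=\sum_{w\in X_j}A_{u,w}$ is the $u$-th row sum of the block $A_{ij}$, which equals the common value $B_{ij}$ by equitability, while $(SB)_{u,j}=B_{ij}$ by the definition of the characteristic matrix $S$. With this in hand the containment of spectra is immediate: if $B\mathbf{x}=\lambda\mathbf{x}$ with $\mathbf{x}\neq\mathbf{0}$, then $A(S\mathbf{x})=S(B\mathbf{x})=\lambda\,S\mathbf{x}$, and $S\mathbf{x}\neq\mathbf{0}$ because the columns of $S$ are indicator vectors of disjoint nonempty cells and therefore linearly independent. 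In particular this yields $\rho(B)\le\rho(A)$, and it will also dispose of the final claim that $S\mathbf{x}$ is an eigenvector of $A$ for $\rho(A)$ once the equality of the two spectral radii is established.

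For the equality $\rho(A)=\rho(B)$ under the assumption that $A$ is nonnegative and irreducible, I would produce a positive eigenvector of $B$ for $\rho(A)$. Let $\mathbf{v}>\mathbf{0}$ be the Perron--Frobenius eigenvector with $A\mathbf{v}=\rho(A)\mathbf{v}$, and let $\bar{\mathbf{v}}\in\mathbb{R}^{s}$ have entries $\bar v_i=\frac{1}{n_i}\sum_{u\in X_i}v(u)$. Summing $A\mathbf{v}=\rho(A)\mathbf{v}$ over $u\in X_i$ and using equitability together with symmetry of $A$ to collapse $\sum_{u\in X_i}A_{u,w}$ for $w\in X_j$ to the common column sum $B_{ji}$, one obtains $\sum_{j}n_jB_{ji}\bar v_j=\rho(A)n_i\bar v_i$. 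The bookkeeping identity $n_iB_{ij}=n_jB_{ji}$ then converts this into $(B\bar{\mathbf{v}})_i=\rho(A)\bar v_i$; this identity is nothing but the symmetry of $S^{\top}AS=DB$, with $D=\mathrm{diag}(n_1,\dots,n_s)$, which holds because $A$ is symmetric. Since $\bar{\mathbf{v}}>\mathbf{0}$, it is a genuine eigenvector of $B$ for the eigenvalue $\rho(A)$, so $\rho(A)\le\rho(B)$ and therefore $\rho(A)=\rho(B)$.

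The final assertion that $S\mathbf{x}$ is an eigenvector of $A$ for $\rho(A)$ whenever $\mathbf{x}$ is an eigenvector of $B$ for $\rho(B)$ is then just the first paragraph's computation applied with $\lambda=\rho(B)=\rho(A)$. The only real subtlety sits in the middle step: one must exploit both the symmetry of $A$ and the equitability of the partition, because equitability alone constrains only the row sums of the blocks $A_{ij}$ and not their column sums, and it is precisely the symmetry-derived relation $n_iB_{ij}=n_jB_{ji}$ that lets the block-averaged Perron vector be recognized as an honest eigenvector of the (generally non-symmetric) quotient matrix $B$.
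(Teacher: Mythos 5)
Your proposal is correct and complete: the identity $AS=SB$, the injectivity of $S$, and the push-forward of eigenvectors give the spectrum containment and the final assertion exactly as in the standard treatment, which is all the paper itself offers, since it cites this lemma to Brouwer--Haemers without reproducing a proof. Where you deviate from the usual textbook route is the equality $\rho(A)=\rho(B)$: the standard argument applies Perron--Frobenius to the nonnegative quotient $B$ (or simply takes any eigenvector $\mathbf{x}\ge 0$ of $B$ for $\rho(B)$), maps it to the nonnegative eigenvector $S\mathbf{x}$ of $A$, and invokes the fact that an irreducible nonnegative matrix has no eigenvalue other than its spectral radius admitting a nonnegative eigenvector. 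You instead block-average the Perron vector of $A$ and use the symmetry-derived relation $n_iB_{ij}=n_jB_{ji}$ (equivalently, symmetry of $S^{\top}AS=DB$) to exhibit a positive eigenvector of $B$ for $\rho(A)$; this is valid, and it buys you independence from any Perron--Frobenius statement about $B$ itself, at the cost of genuinely using the symmetry of $A$ (which the lemma assumes anyway) and slightly more computation. Your diagnosis of the one subtlety is accurate: equitability controls only row sums of the blocks, and it is symmetry that makes the averaged vector an eigenvector of the non-symmetric $B$. Both routes are sound; either would serve as a proof of the cited lemma.
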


%\begin{Lemma}\label{jo}\cite{T}
%Let $G_1$ be a $d_1$-regular graph with order $n_1$, %and $\Delta(G_1)=d_1$,
%and $G_2$ be a graph with $\Delta(G_2)=d_2$ of order $n-n_1$.
%%and , may not simple graph and loops add $1$ to the degree.
%Then $\rho(G_1\vee G_2)\le\rho(D)$, where $D=\begin{pmatrix} d_1&n-n_1\\n_1& d_2\end{pmatrix}$.
%\end{Lemma}
%%
%%
%And it can be generalized as following in \cite{LND}.

\begin{Lemma}\cite[Corollary 2.1, p.~38]{Mi} \label{none}
Let $A$ and $B$ be two nonnegative matrix of order $n\times n$.
If $A\le B$  entry wise, then $\rho(A)\le \rho(B)$.
Moreover, if $A\ne B$ and $B$ is irreducible, then
$\rho(A)< \rho(B)$.
\end{Lemma}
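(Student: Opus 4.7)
The plan is to derive the weak inequality from Gelfand's formula and the strict inequality from a Perron--Frobenius argument that exploits irreducibility of $B$.

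For $\rho(A)\le\rho(B)$: nonnegativity is preserved under matrix products, so by induction on $k$ the entrywise bound $0\le A\le B$ gives $A^k\le B^k$ entrywise for every $k\ge 1$. Pick any monotone matrix norm (e.g., $\|M\|=\sum_{i,j}|M_{ij}|$); then $\|A^k\|\le\|B^k\|$, and Gelfand's formula $\rho(M)=\lim_{k\to\infty}\|M^k\|^{1/k}$ immediately yields $\rho(A)\le\rho(B)$.

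For the strict inequality assume $B$ is irreducible, $A\ne B$, and suppose, toward a contradiction, $\rho(A)=\rho(B)=:\rho$. Apply Perron--Frobenius to the irreducible $B$ to obtain a strictly positive right eigenvector $u>0$ with $Bu=\rho u$; apply Perron--Frobenius to the nonnegative $A$ to obtain a nonzero $x\ge 0$ with $Ax=\rho x$. Set $t=\max\{s\ge 0: sx\le u\text{ entrywise}\}$, which is finite and positive, and put $v=u-tx\ge 0$. By maximality of $t$, $v$ has at least one zero coordinate. From $B\ge A$ and $x\ge 0$ we get $Bx\ge Ax=\rho x$, whence
\begin{equation*}
Bv=Bu-tBx\le\rho u-t\rho x=\rho v.
\end{equation*}
If $v\ne 0$, let $I=\{i:v_i=0\}$ and $J=\{i:v_i>0\}$; both are nonempty. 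For any $i\in I$, $(Bv)_i\le 0$ reads $\sum_{j\in J}B_{ij}v_j\le 0$, forcing $B_{ij}=0$ for every $i\in I$, $j\in J$, which contradicts irreducibility of $B$. Hence $v=0$, so $u=tx$ and consequently $x>0$.

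Finally, since $B^T$ is also irreducible, Perron--Frobenius yields a positive left eigenvector $y>0$ of $B$ with $y^TB=\rho y^T$. Then
\begin{equation*}
\rho\,y^Tx=y^TAx=y^TBx-y^T(B-A)x=\rho\,y^Tx-y^T(B-A)x,
\end{equation*}
so $y^T(B-A)x=0$. But $y>0$, $x>0$, and $B-A$ is nonnegative with at least one strictly positive entry, so $y^T(B-A)x>0$, a contradiction. The only delicate step is the comparison argument that upgrades the possibly-only-nonnegative Perron eigenvector $x$ of $A$ to a strictly positive vector, using irreducibility of $B$ via the partition $(I,J)$; once $x>0$ is secured, the final contradiction is immediate.
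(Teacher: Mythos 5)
Your proof is correct. Note that the paper does not prove this lemma at all: it is quoted verbatim from Minc's \emph{Nonnegative Matrices} (Corollary 2.1, p.~38), so there is no internal argument to compare against. Your two steps are the standard ones from the literature: the weak inequality via $0\le A\le B \Rightarrow A^k\le B^k$ and Gelfand's formula, and the strict inequality via a Perron--Frobenius comparison, where the maximality construction $t=\max\{s\ge 0: sx\le u\}$ together with irreducibility of $B$ (through the partition $(I,J)$) upgrades the nonnegative Perron vector $x$ of $A$ to a strictly positive one, after which pairing with a positive left Perron vector $y$ of $B$ gives $y^\top(B-A)x=0$, contradicting $y,x>0$ and $B-A\ge 0$, $B\ne A$. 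All the delicate points (finiteness and positivity of $t$, a vanishing coordinate of $v$, irreducibility of $B^\top$) are handled correctly, so the argument is a complete and valid substitute for the cited reference.
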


For positive integer $r$ and $s$, we denote by $J_{r,s}$ the $r\times s$ all one's matrix.
For integer $s\ge 3$, the join of  vertex disjoint graphs $G_1,\dots, G_p$, denoted by
$G_1\vee\dots\vee G_s$, is defined as $(G_1\vee \dots G_{p-1})\vee G_p$.

\begin{Lemma}\label{jn}
Let $G_i$ be a graph with order $n_i$ and $\Delta(G_i)=d_i\ge 1$, where $i=1,\dots,s$, $s\ge 2$.
%may not simple graph and loops add $1$ to the degree,
Then $\rho(G)\le\rho(D)$, where $G=G_1\vee\dots\vee G_s$ and
\[
D=\begin{pmatrix} d_1&n_2&\cdots&n_s\\n_1& d_2&\cdots&n_s\\ \vdots&\vdots&\ddots&\vdots\\ n_1&n_2&\cdots&d_s \end{pmatrix}.
\]
\end{Lemma}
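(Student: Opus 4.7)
\medskip

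The plan is to test the Perron eigenvector of $A(G)$ against the $s\times s$ matrix $D$ via a block-max reduction, rather than trying to realize $D$ as the quotient of an equitable partition (which would require building a symmetric dominating matrix, a more delicate construction). Let $\mathbf{x}$ be a Perron eigenvector of $G$ corresponding to $\rho=\rho(G)$. Because every pair of parts $V(G_i),V(G_j)$ is completely joined, $G$ is connected, so the Perron--Frobenius theorem gives $\mathbf{x}>\mathbf{0}$ entrywise. For each $i$, set $m_i:=\max_{v\in V(G_i)}x_v$ attained at some $v_i\in V(G_i)$ and form the positive vector $\mathbf{m}=(m_1,\dots,m_s)^{\top}$.

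Next, I would apply the eigenequation $\rho\,x_{v_i}=\sum_{u\in N_G(v_i)}x_u$ and split $N_G(v_i)$ according to the parts $V(G_1),\dots,V(G_s)$. The contribution from $V(G_i)$ is at most $d_{G_i}(v_i)\,m_i\le d_i m_i$ (using $\Delta(G_i)=d_i$ and $x_u\le m_i$ for $u\in V(G_i)$); for $j\ne i$, the join forces $v_i$ to be adjacent to every vertex of $V(G_j)$, contributing $\sum_{u\in V(G_j)}x_u\le n_j m_j$. Summing gives
\[
\rho\,m_i\le d_i m_i+\sum_{j\ne i}n_j m_j=(D\mathbf{m})_i\qquad\text{for each }i,
\]
so $D\mathbf{m}\ge\rho\,\mathbf{m}$ componentwise.

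Finally, $D$ is nonnegative and irreducible, since $D_{ij}=n_j\ge 1$ for all $i\ne j$. Pairing the inequality $D\mathbf{m}\ge\rho\,\mathbf{m}$ with a positive left Perron eigenvector $\mathbf{y}^{\top}$ of $D$ yields
\[
\rho(D)\,\mathbf{y}^{\top}\mathbf{m}=\mathbf{y}^{\top}D\mathbf{m}\ge\rho\,\mathbf{y}^{\top}\mathbf{m},
\]
and $\mathbf{y}^{\top}\mathbf{m}>0$ gives $\rho(G)\le\rho(D)$.

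The only real subtlety is verifying strict positivity of $\mathbf{m}$ so that the final Perron--Frobenius comparison is nondegenerate; this is immediate from connectedness of $G$ (guaranteed by the join structure together with $s\ge 2$). Beyond this point, the argument is just an elementary splitting of the eigenequation followed by a one-line Perron--Frobenius comparison, so no delicate estimate is needed.
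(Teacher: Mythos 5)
Your proof is correct, and it takes a genuinely different route from the paper. The paper argues by matrix domination plus a quotient: it pads each diagonal block $A(G_i)$ with diagonal entries so that every block row sum becomes exactly $d_i$, obtaining a symmetric nonnegative matrix $B$ with $A(G)\le B$ entrywise; then $\rho(G)\le\rho(B)$ by the monotonicity lemma (Lemma~\ref{none}), and since the partition into the $V(G_i)$ is equitable for $B$, the quotient lemma (Lemma~\ref{QM}) gives $\rho(B)=\rho(D)$. You instead avoid both auxiliary lemmas: you take the Perron vector of $A(G)$, reduce it to the block-maxima vector $\mathbf{m}$, derive the componentwise inequality $D\mathbf{m}\ge\rho(G)\,\mathbf{m}$ directly from the eigenequation (using $\Delta(G_i)=d_i$ on the within-block part and completeness of the join on the cross-block parts), and close with a left Perron eigenvector pairing, which is a standard and fully rigorous way to conclude $\rho(G)\le\rho(D)$ from a subinvariance inequality against an irreducible nonnegative matrix. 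The details check out: $G$ is connected because it is a join with $s\ge2$ nonempty parts, so $\mathbf{x}>0$ and hence $\mathbf{m}>0$; $D$ is irreducible since its off-diagonal entries $n_j\ge1$ are positive; and $\mathbf{y}^{\top}\mathbf{m}>0$ legitimizes the final division. What each approach buys: the paper's construction is slightly more structural (it exhibits an explicit dominating matrix whose quotient is $D$, reusing lemmas already stated for other parts of the paper), while yours is more elementary and self-contained, needing only Perron--Frobenius for $A(G)$ and for $D$.
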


\begin{proof} For $i\in\{1,\cdots,s\}$, any row sum of $A(G_i)$ is at most $d_i$, and if some row sum $t$ of $A(G_i)$ is less than $d_i$, then we form a matrix $B_i$ from $A(G_i)$ by changing the diagonal entry in each such  row from $0$ to $d_i-t$. Then  each $A_i$ has constant row sum $d_i$ for $i=1,2,3$. Evidently,
$A(G)\le B$  entry wise,
where
\[
B=\begin{pmatrix}
B_1& J_{n_1,n_2} & \cdots & J_{n_1,n_s}\\
J_{n_2,n_1}& B_2 & \cdots & J_{n_2, n_s}\\
\vdots & \vdots & \ddots & \vdots\\
J_{n_s,n_1} & J_{n_s, n_2} & \cdots & B_s
\end{pmatrix}.
\]
By Lemmas \ref{none} and \ref{QM}, $\rho(G)\le \rho(B)=\rho(D)$.
\end{proof}

Let $G$ be a given graph.
For $V_1\subset V(G)$ and $u\in V(G)$, let $N_{V_1}(u)=N_G(u)\cap V_1$ and $d_{V_1}(u)$ be the number of vertices in $N_{V_1}(u)$.
For a subgraph $H$ of $G$ induced by $U\subseteq V(G)$,  $e(U)=e(H)$.
For nonempty disjoint $S, T\in V(G)$, let $G[S, T]$ be the bipartite subgraph of $G$ on the vertex set $S\cup T$ and edge set $E(G[S, T])=\{uv\in E(G): u\in S, v\in T\}$. We also write  $e(S,T)$ or
$e(G[S], G[T])$ for $e(G[S, T])$. We use these notations in Sections 3 and 4.

\section{Proof of Theorem \ref{N1}}

Let $n_1$, $n_2$, $n$, $t$, $k$ be positive integers with $n_1\ge n_2$,  $n_1+n_2=n-t+1$, and $k\ge 3$.
Suppose that $\mathcal{K}_{n_1, n_2}^{k-1}(P_{2k})\ne \emptyset$. For $H\in  \mathcal{K}_{n_1, n_2}^{k-1}(P_{2k})$,
let
\[
f(n_1, n_2, t)=e(K_{t-1}\vee H).
\]
i.e.,
\begin{align*}
f(n_1, n_2, t)&={t-1\choose 2}+(t-1)(n_1+n_2)+\left\lfloor\frac{(k-1)n_1}{2}\right\rfloor+n_1n_2\\
&={t-1\choose 2}+(n_1+t-1)(n-t+1)+\left\lfloor\frac{(k-1)n_1}{2}\right\rfloor-n_1^2.
\end{align*}
By a direct calculation, $f(n_1, n_2, t)$ is maximized  if and only if
\[
n_1=\begin{cases}
\left\lfloor\frac{n-t+1}{2}+\frac{k-1}{4}\right\rfloor & \mbox{if~}2n-2t+k\equiv 0\!\!\pmod{4},\\
\left\lceil\frac{n-t+1}{2}+\frac{k-1}{4}\right\rceil & \mbox{otherwise}.
\end{cases}
\]
We denote by $f(n,t)$ the maximum of $f(n_1, n_2, t)$.

\begin{Lemma}\label{L1} Let $G$ be a $t(P_1\vee P_{2k})$-free graph of order $n$ with a partition $V(G)=V_1\cup V_2$, where $|V_1|\ge|V_2|$, and $|V_1|=\frac{n}{2}+O(\sqrt{n})$.
Suppose that $n$ is sufficiently large. If, for each $i\in\{1,2\}$, any nonempty set $S\subseteq V_i$
with $|S|\le t(k+1)$ satisfies
\[
\left|\bigcap_{v\in S} N_{V_{3-i}}(v)\right|\ge t(2k+1),
\]
then
\[
e(G)\le f(n, t)
\]
with  equality when $t\ge2$ only if $G$ has a vertex of degree $n-1$.
\end{Lemma}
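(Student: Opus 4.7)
The plan is to argue by induction on $t$, with base case $t = 1$ immediate from Theorem \ref{T1} since the maximisation over bipartitions $n_1 + n_2 = n$, $n_1 \ge n_2$ recovers $f(n, 1)$ by definition of $f$. Fix $t \ge 2$, assume the lemma for $t - 1$, and split the analysis according to whether $\Delta(G) = n - 1$.

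When $\Delta(G) = n - 1$, pick $v$ with $d_G(v) = n - 1$. The central sub-claim is that $G - v$ is $(t-1)(P_1 \vee P_{2k})$-free, which I will prove by contradiction. Given disjoint copies $F_1, \ldots, F_{t-1}$ of $P_1 \vee P_{2k}$ in $G - v$, set $R = V(G) \setminus (\{v\} \cup \bigcup_j V(F_j))$; for large $n$, $|R \cap V_i| \ge \frac{n}{2} + O(\sqrt n) - (t-1)(2k+1) > t(k+1)$. I then greedily build a $P_{2k}$ in $G[R]$ alternating between $R \cap V_1$ and $R \cap V_2$: at each step, the common-neighbour condition applied to the $\le k$ previously chosen same-side vertices (a set of size $\le t(k+1)$) supplies at least $t(2k+1)$ common neighbours on the opposite side, and removing the $1 + (t-1)(2k+1) + O(k)$ already-excluded vertices leaves a valid choice. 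Since $v$ is universal, it extends this $P_{2k}$ to a $t$-th fan disjoint from $F_1, \ldots, F_{t-1}$, contradicting $t$-fan-freeness. Once the sub-claim is established, $G - v$ with the induced partition satisfies the lemma's hypotheses for $t - 1$ (common-neighbour bounds decrease by at most one, absorbed by the slack $t(2k+1) - 1 \ge (t-1)(2k+1)$), and the inductive hypothesis gives $e(G - v) \le f(n - 1, t - 1)$. Expanding the closed forms and noting that the optimal $n_1$ is invariant under $(n, t) \mapsto (n - 1, t - 1)$ (the parity of $n - t$ is preserved) yields the identity $f(n, t) = f(n - 1, t - 1) + (n - 1)$, so $e(G) \le f(n, t)$.

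When $\Delta(G) \le n - 2$, the aim is the strict inequality $e(G) < f(n, t)$, which together with the previous case delivers both the bound and the equality clause. I introduce $U = \{u \in V(G) : d_G(u) \ge n - C\}$ for a suitable constant $C = C(t, k)$, and argue by fan-construction that $|U| \le t - 1$ (otherwise any $t$ vertices of $U$ serve as centres of $t$ disjoint fans, the $P_{2k}$'s built greedily in the residual via the common-neighbour condition) and that $G - U$ is $(t - |U|)(P_1 \vee P_{2k})$-free (otherwise combine its fans with $|U|$ more centred at $U$-vertices). The inductive hypothesis applied to $G - U$ with parameters $(n - |U|, t - |U|)$, together with the telescoped identity $f(n, t) = f(n - s, t - s) + s(n - s) + \binom{s}{2}$ (iterating the previous identity), gives
\[
e(G) \le \binom{|U|}{2} + e(U, V \setminus U) + f(n - |U|, t - |U|) \le f(n, t).
\]
The hypothesis $\Delta(G) \le n - 2$ forces at least one $u \in U$ to have $d_G(u) \le n - 2$, so the contribution $e(U) + e(U, V \setminus U)$ falls short of its extremal value $\binom{|U|}{2} + |U|(n - |U|)$ by at least one, producing the strict inequality.

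The main obstacle will be the second case. The delicate points are: (i) calibrating $C = C(t, k)$ large enough that every $U$-vertex remains dominating on the residual after the $O(tk)$ deletions used by the greedy fan construction; (ii) verifying that the common-neighbour condition passes from $G$ to $G - U$ in the weaker form required to invoke the inductive hypothesis with parameters $(n - |U|, t - |U|)$ for intermediate $|U| < t - 1$; and (iii) propagating the edge deficit from $\Delta(G) \le n - 2$ through the telescoping of $f$ so that the strict inequality survives every intermediate value of $|U|$. The bookkeeping around these details is what I expect to consume most of the technical effort.
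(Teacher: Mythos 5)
Your first case (a universal vertex $v$, peel it off, show $G-v$ is $(t-1)(P_1\vee P_{2k})$-free by greedily growing an alternating $P_{2k}$ in the residual and coning it off at $v$, then use $f(n,t)=f(n-1,t-1)+(n-1)$) is sound and is essentially the paper's Case 1, except that the paper triggers this peeling under the weaker condition that some vertex $u$ satisfies $d_{V_i}(u)\ge t(2k+1)$ on its own side, which then yields $e(G)\le d_G(u)+f(n-1,t-1)\le f(n,t)$ with equality only if $d_G(u)=n-1$. That weaker trigger is not cosmetic: it is what makes the complementary case tractable, and this is where your proposal has a genuine gap.

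In your second case ($\Delta(G)\le n-2$) the entire argument runs through $U=\{u: d_G(u)\ge n-C\}$, and when $U=\emptyset$ the reduction "apply the inductive hypothesis to $G-U$ with parameters $(n-|U|,t-|U|)$" is the statement being proved — it is circular and produces no bound. This is not a degenerate corner: graphs in which every degree is about $n/2$ (e.g.\ the bipartite-type near-extremal configurations without the $K_{t-1}$ part) satisfy all hypotheses, have $\Delta(G)\ll n-C$, and are exactly the regime where the real work lies. Knowing only that all degrees are below $n-C$ is too weak to close it: deleting a few vertices could cost up to $n-C$ edges each, so no telescoping of $f$ helps. The paper's complementary case instead assumes every vertex has same-side degree below $t(2k+1)$, hence total degree at most $|V_{3-i}|+t(2k+1)\approx n/2$; it deletes the vertex set of one whole copy of $P_1\vee P_{2k}$, applies induction to the remaining $(t-1)$-fan-free graph, and wins strictly because $f(n,t)-f(n-2k-1,t-1)>(k+1)n-O(1)$ while the $2k+1$ deleted vertices carry at most about $\tfrac{2k+1}{2}n+O(\sqrt n)$ edges (this is where $|V_1|=\tfrac n2+O(\sqrt n)$ is used). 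Nothing in your proposal plays this role. A secondary, smaller problem: even when $1\le |U|\le t-1$, your plan to build $t$ (or $|U|$) disjoint fans simultaneously centred at $U$-vertices overdraws the hypothesis — the guarantee of only $t(2k+1)$ common neighbours does not leave room to avoid the up to $(t-1)(2k+1)+O(t)+C$ vertices already committed once $t$ is large relative to $k$; this part is repairable by peeling the $U$-vertices one at a time (each has huge same-side degree), but that repair is precisely the paper's Case 1 mechanism and still leaves the $U=\emptyset$ case untouched.
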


\begin{proof}
Our proof proceeds by induction on  $t$.
The case $t=1$ is trivial by Theorem \ref{T1}.

Suppose that $t\ge 2$. % and Lemma \ref{L1} holds for $t-1$.
Observe that
\begin{align}\label{e1}
f(n,t)-f(n-1, t-1)={t-1\choose 2}-{t-2\choose 2}+n-t+1=n-1.
\end{align}
%\begin{align}\label{e1}
%&f(n_1,n_2,t-1)\notag \\
%=&\max\left\{{t-2\choose 2}+(t-2)(n_1+n_2)+\left\lfloor\frac{(k-1)n_1}{2}\right\rfloor+n_1n_2: n_1\ge \frac{n-1-(t-1)+1}{2}\right\}\notag \\
%=&\max\left\{{t-1\choose 2}+(t-1)(n_1+n_2)+\left\lfloor\frac{(k-1)n_1}{2}\right\rfloor+n_1n_2-(n-1): n_1\ge \frac{n-t+1}{2}\right\}\notag \\
%=& f(n_1,n_2,t)-(n-1).
%\end{align}

\noindent{\bf Case 1.} For some $i=1,2$,  $\exists u\in V_i$ such that $d_{V_i}(u)\ge t(2k+1)$.

Let $G'=G-u$.  Suppose that  $G'$ contains $H:=(t-1)(P_1\vee P_{2k})$.  For $S\subseteq N_{V_i}(u)\setminus V(H)$ with $|S|=k$, as all vertices of $S\cup\{u\}$ have at least $t(2k+1)$ common neighbors in $V_{3-i}$, so  at least $2k+1>k$ of these neighbors lie in  $V_{3-i}\setminus V(H)$. Then
the vertices of $S\cup\{u\}$, together with $k$ neighbors of them in  $V_{3-i}\setminus V(H)$ will generate a copy of $P_1\vee P_{2k}$ with center $u$,  which  is vertex disjoint from $H$ in $G$.
So $G$ contains $t(P_1\vee P_{2k})$, a contradiction.
It follows that $G'$ is $(t-1)(P_1\vee P_{2k})$-free.
Let $V_1'=V_2$ and $V_2'=V_1\setminus\{u\}$ if $i=1$ and $|V_1|=|V_2|$, and $V_i'=V_i\setminus\{u\}$ and $V_{3-i}'=V_{3-i}$ otherwise. Then for each $j=1,2$, any nonempty set $S\subseteq V_j'$
with $|S|\le (t-1)(k+1)$ satisfies
\[
\left|\bigcap_{v\in S} N_{V_{3-j}'}(v)\right|\ge\left|\bigcap_{v\in S} N_{V_{3-j}}(v)\setminus\{u\}\right|\ge t(2k+1)-1>(t-1)(2k-1).
\]
By the induction hypothesis and using \eqref{e1}, we have
\[
e(G')\le f(n-1,t-1)= f(n,t)-(n-1).
\]
As  $e(G)=d_G(u)+e(G')\le n-1+e(G')$, we have $e(G)\le f(n,t)$ with equality only if $d(u)=n-1$.

\noindent{\bf Case 2.} For any $i=1,2$, $\forall u\in V_i$,  $d_{V_i}(u)<t(2k+1)$.

In this case, we have $d_G(u)<|V_{3-i}|+t(2k+1)$. %If $G$  is $P_1\vee P_{2k}$-free, then as above,
Assume that $G$ contains one copy of $P_1\vee P_{2k}$, as otherwise the result follows  from the fact that $e(G)\le f(n, 1)< f(n, t)$.
Let $U$ be the vertex set of this  $P_1\vee P_{2k}$, consisting of $r$ vertices from $V_1$ and $s$ vertices from $V_2$, where $r+s=2k+1$.
Let $G'=G-U$.
Then $G'$ is $(t-1)(P_1\vee P_{2k})$-free of order $n-2k-1$.

Let $V_1'=V_2\setminus\{U\}$ and $V_2'=V_1\setminus\{U\}$ if $i=1$ and $|V_1|-r<|V_2|-s$, and $V_i'=V_i\setminus\{U\}$ and $V_{3-i}'=V_{3-i}\setminus\{U\}$ otherwise. Then for each $j=1,2$, any nonempty set $S\subseteq V_j'$
with $|S|\le (t-1)(k+1)$ satisfies
\[
\left|\bigcap_{v\in S} N_{V'_{3-j}}(v)\right|\ge\left|\bigcap_{v\in S} N_{V_{3-j}}(v)\setminus U\right|>(t-1)(2k-1).
\]
By the induction hypothesis,
we have
\[
e(G')\le f(n-2k-1, t-1).
\]
So
\begin{align}
e(G)&\le e(G')+r(|V_2|+t(2k+1))+s(|V_1|+t(2k+1)) \notag \\
&\le f(n-2k-1, t-1)+r|V_2|+s|V_1|+t(2k+1)^2. \label{e0}
\end{align}
Note that
\begin{align}\label{e2}
&\quad f(n,t)-f(n-2k,t)   \notag \\
&=(n_1+t-1)(n-t+1)+\left\lfloor\frac{(k-1)n_1}{2}\right\rfloor-n_1^2-(n_1-k+t-1)(n-2k-t+1)  \notag \\
&\quad -\left\lfloor\frac{(k-1)(n_1-k)}{2}\right\rfloor+(n_1-k)^2  \notag \\
&> kn+k(t-k-1),
\end{align}
where the last inequality holds as $\left\lfloor\frac{(k-1)n_1}{2}\right\rfloor-\left\lfloor\frac{(k-1)(n_1-k)}{2}\right\rfloor>0$.
From  \eqref{e1} and \eqref{e2}, we have
\begin{align*}
f(n-2k-1, t-1)&= f(n-2k, t)-(n-2k-1)\\
&< f(n,t)-kn-k(t-k-1)-(n-2k-1)\\
&=f(n,t)-(k+1)n-k(t-k-3)+1.
\end{align*}
Now from \eqref{e0} and noting that $|V_1|=\frac{n}{2}+O(\sqrt{n})\ge|V_2|$, and $r+s=2k+1$, we have
\begin{align*}
e(G)&\le f(n-2k-1, t-1)+r|V_2|+s|V_1|+t(2k+1)^2\\
&< f(n,t)-(k+1)n-k(t-k-3)+1+(r+s)|V_1|+t(2k+1)^2\\
&=f(n,t)-(k+1)n-k(t-k-3)+1+\frac{2k+1}{2}n+t(2k+1)^2+O(\sqrt{n})\\
&=f(n,t)-\frac{1}{2}n+O(\sqrt{n})\\
&<f(n,t)
\end{align*}
as $n$ is sufficiently large.
\end{proof}

We need the progressive induction introduced by Simonovits \cite{MS}.
%Let $f(G)$ be an integer valued quantity of graphs $G$ on $n$ vertices.
%Suppose we want to prove that $f(G)$ takes
%its maximum on a graph $G_n$ (or on a family of graphs). Ordinary induction assumes that
%this statement holds for some $n'$, and for larger $n$ it proves that $f$ increases by at most
%$f(G_n)-f(G_{n'})$. Progressive induction does not have the assumption. In this case one has
%to prove that $f$ increases by strictly less than $f(G_n)-f(G_{n'})$ (unless the graph is
%$G_n$). This means that for small values of $n$, $f(G)$ may be larger on an $n$-vertex graph than
%$f(G_n)$, but this surplus starts decreasing after a while, and eventually vanishes.

\begin{Lemma} \label{ind} \cite{MS}
Let $\mathcal{G}_n$  be a family of disjoint graphs of order $n$ with the same size and let $\mathcal{G}=\bigcup_{n\ge 1}\mathcal{G}_n$. Let $B$ be a condition defined on  $\mathcal{G}$.
Let $\phi$ be a function on
graphs in $\mathcal{G}$ satisfying (a) $\phi(G)$ is a non-negative integer, and if $G$ satisfies $B$, then
$\phi(G)=0$, and (b) there is an $n_0$  such that if  $G\in \mathcal{G}_n$ with $n>n_0$, then either $G$ satisfies $B$ or
there is  an $n'$ with $\frac{n}{2}<n'<n$ and a graph $G'\in \mathcal{G}_{n'}$ such that  $\phi(G)<\phi(G')$. Then  there exists $n_1$ such that if $n>n_1$, then it follows  from $G\in \mathcal{G}$ that $G$ satisfies $B$.
\end{Lemma}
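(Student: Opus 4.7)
The plan is to argue by contradiction, driven by the well-ordering of $\mathbb{N}$. Suppose the conclusion fails, so that for every $n_1$ there is some $n>n_1$ and a graph $G\in\mathcal{G}_n$ that does not satisfy $B$. Starting from such a $G$, I would use condition (b) iteratively to build a sequence $G=G_0,G_1,G_2,\dots$ with orders $n=m_0>m_1>m_2>\cdots$ satisfying $m_{i+1}>m_i/2$ and with strictly increasing $\phi$-values. The contradiction will come from squeezing the length of this sequence between a lower bound depending on $n$ and an upper bound depending only on $n_0$.

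The first step is to check that the iteration never escapes the ``bad'' regime. As long as $m_i>n_0$, condition (b) supplies $G_{i+1}\in\mathcal{G}_{m_{i+1}}$ with $m_i/2<m_{i+1}<m_i$ and $\phi(G_{i+1})>\phi(G_i)$. If some $G_{i+1}$ happened to satisfy $B$, condition (a) would give $\phi(G_{i+1})=0$, contradicting $\phi(G_{i+1})>\phi(G_i)\ge 0$. Hence every $G_i$ produced still fails $B$, and (b) can be reapplied until some terminal index $k$ with $m_k\le n_0$ (termination must occur since the orders form a strictly decreasing sequence of positive integers). Unrolling $m_i>m_{i-1}/2$ gives $m_k>n/2^k$, so $n_0\ge m_k>n/2^k$, forcing $k>\log_2(n/n_0)$; meanwhile $\phi$ has accumulated at least one unit per step, so $\phi(G_k)\ge k$.

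The decisive observation — really the whole content of the lemma — is that there are only finitely many graphs of order at most $n_0$, hence
\[
M:=\max\{\phi(H):H\in\mathcal{G}_m,\ m\le n_0\}
\]
is a finite constant depending only on $n_0$ (and on $\phi$). At termination $G_k$ lies in this finite family, so $\phi(G_k)\le M$, which combined with $\phi(G_k)\ge k>\log_2(n/n_0)$ forces $n<n_0\cdot 2^M$. Taking $n_1:=n_0\cdot 2^M$ then contradicts the assumed existence of arbitrarily large counterexamples, which completes the argument. The main (and essentially only) conceptual hurdle is isolating the constant $M$: recognizing that the ``boundary data'' of the induction, the graphs of order at most $n_0$, constitute a finite set on which $\phi$ is automatically bounded. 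Everything else is bookkeeping on the halving of orders and on the well-ordering of the non-negative integers; no graph-theoretic input is needed, the lemma being purely a combinatorial transfer principle.
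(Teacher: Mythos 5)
Your proof is correct. The paper itself gives no argument for this lemma — it is quoted from Simonovits' progressive-induction paper \cite{MS} — so there is nothing to diverge from; your descent argument is exactly the standard proof of that result. The chain of estimates checks out: strict increase of the integer-valued $\phi$ forces every graph produced along the descent to violate $B$ (since $\phi>0$), so condition (b) can be reapplied while the order exceeds $n_0$; the halving bound $m_k>n/2^k$ together with $m_k\le n_0$ gives $k>\log_2(n/n_0)$, while $\phi(G_k)\ge k$; and the one genuinely load-bearing observation, which you state explicitly, is that $\phi$ is bounded by some constant $M$ on $\bigcup_{m\le n_0}\mathcal{G}_m$ because there are only finitely many graphs of order at most $n_0$ (in the paper's application this is even simpler, since there $\phi(G)=\mathrm{ex}(n,F)-f(n,t)$ depends only on the order). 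This yields $n<n_0\,2^M$ for any counterexample of order exceeding $n_0$, so $n_1=n_0\,2^M$ works, as you conclude.
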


Now we are ready to prove Theorem~\ref{N1}.

\begin{proof}[Proof of Theorem~\ref{N1}]
Let $\mathcal{H}_{n,t}=\{K_{t-1}\vee H: H\in \mathcal{K}_{n_1,n-n_1-t+1}^{k-1}(P_{2k})\}$ and  $F=t(P_1\vee P_{2k})$.
From the initial discussion in this part, we have

\begin{Claim} \label{c1}
For any $G_0\in \mathcal{H}_{n,t}$, $e(G_0)=f(n,t)$.
\end{Claim}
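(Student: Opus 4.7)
The plan is to verify the claim by a direct edge count, matching it against the closed-form definition of $f(n,t)$ given in the preamble of Section~3.

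Fix $G_0 = K_{t-1} \vee H$ with $H \in \mathcal{K}_{n_1, n-n_1-t+1}^{k-1}(P_{2k})$, and let $(V_1, V_2)$ denote the bipartition of the underlying copy of $K_{n_1, n_2}$ inside $H$, with $|V_1|=n_1$ and $|V_2|=n_2=n-n_1-t+1$. I would partition $E(G_0)$ into four disjoint classes and count each separately: (i) edges inside the $K_{t-1}$ factor, contributing $\binom{t-1}{2}$; (ii) join edges between $V(K_{t-1})$ and $V(H)$, contributing $(t-1)(n-t+1)$ since $|V(H)|=n-t+1$; (iii) edges of the spanning complete bipartite graph $K_{n_1,n_2}$ inside $H$, contributing $n_1 n_2$; (iv) edges of the $(k-1)$-regular or nearly $(k-1)$-regular subgraph $G[V_1]$ on $V_1$, contributing exactly $\lfloor (k-1)n_1/2\rfloor$ by the handshaking lemma (both the regular and the nearly-regular cases yield the same floor, matching the parity condition built into the definition of $\mathcal{K}_{n_1,n_2}^{k-1}(P_{2k})$).

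Summing the four contributions and using $n_2 = n-t+1-n_1$ gives
\[
e(G_0)=\binom{t-1}{2}+(t-1)(n-t+1)+n_1 n_2+\left\lfloor\frac{(k-1)n_1}{2}\right\rfloor,
\]
and regrouping $(t-1)(n-t+1)+n_1 n_2 = (n_1+t-1)(n-t+1)-n_1^2$ reproduces exactly the closed form for $f(n_1,n_2,t)$ stated at the beginning of the section. Since $n_1$ in the definition of $\mathcal{H}_{n,t}$ was chosen to be the maximizer of $f(n_1,n_2,t)$ subject to $n_1+n_2=n-t+1$ and $n_1\ge n_2$, this value is precisely $f(n,t)$.

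There is no real obstacle: the claim is a bookkeeping reconciliation between the structural description of graphs in $\mathcal{H}_{n,t}$ and the algebraic formula for $f(n,t)$. The only subtlety worth noting is that the value of $n_1$ prescribed in Theorem~\ref{N1} is the same integer for which $\mathcal{K}_{n_1, n_2}^{k-1}(P_{2k})\ne\emptyset$, so the set $\mathcal{H}_{n,t}$ over which the claim quantifies is nonempty for sufficiently large $n$; this follows because one can realize the required $(k-1)$-regular or nearly $(k-1)$-regular $P_{2k}$-free graph on $V_1$ as a disjoint union of short cycles when $n$ is large, which is $P_{2k}$-free for $k\ge 3$.
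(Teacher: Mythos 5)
Your proof is correct and is essentially the paper's own argument: the paper treats this claim as immediate from the opening computation of Section~3, where $e(K_{t-1}\vee H)$ is expanded into exactly the four contributions you list and the maximizing $n_1$ is built into the definition of $\mathcal{H}_{n,t}$. One peripheral remark: your nonemptiness aside via ``disjoint union of short cycles'' only produces $2$-regular graphs, so it works only for $k=3$; for general $k$ one would take, e.g., disjoint copies of $K_k$ (plus a minor modification in the nearly regular case), though this is not actually needed for the claim as stated.
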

%\begin{proof}
%By a direct calculation,
%\[
%e(G_0)={t-1\choose 2}+(n_1+t-1)(n-t+1)+\left\lfloor\frac{(k-1)n_1}{2}\right\rfloor-n_1^2.
%\]
%If follows that
%$e(G_0)=f(n,t)$.
%\end{proof}

We want to show that
$\mathrm{ex}(n, F)=f(n,t)$ and
$Ex(n, F)=\mathcal{H}_{n,t}$.
We prove the result  by induction on $t$.
The case $t=1$ is trivial by Theorem \ref{T1}.
Suppose next that $t\ge2$ and it holds for $t-1$.
By Claim \ref{c1},  a graph in $\mathcal{H}_{n,t}$ has size $f(n,t)$.
Let  $G$ be a $F$-free  graph of order $n$ that maximizes the size.
It suffices to show that, for sufficiently large $n$,
$G\in \mathcal{H}_{n,t}$.

Let $\mathcal{G}_n=\mathrm{Ex}(n, F)$. Let $B$ be the condition that $G\in \mathcal{H}_{n,t}$, where  $G\in \mathcal{G}_n$. Let $\phi(G)=e(G)-f(n,t)$, that is, $\phi(G)=\mathrm{ex}(n,F)-f(n,t)$, which depends only on the order $n$ of $G$, so we write $\phi(n)$ for $\phi(G)$.

Let $N$ be an even positive integer, which is large enough but less than $\frac{n}{15}$.
As $G$ is $F$-free, we have by Lemma \ref{T2} that
there is an integer $n_0$ depending on $N$ such that for $n>n_0$,
\[
e(G)> e(K_{\lceil\frac{n}{2}\rceil, \lfloor\frac{n}{2}\rfloor})=\left\lfloor\frac{n^2}{4}\right\rfloor>\mathrm{ex}(n, K_{N,N}),
\]
implying that $G$ contains $K_{N, N}$.
Let $(X, Y)$ be the bipartition of $K_{N, N}$.
Let $G_1=G[X\cup Y]$ and $G_2=G[V(G)\setminus(X\cup Y)]$. Then
\[
e(G)=e(G_1)+e(G_2)+e(G_1, G_2).
\]
Similarly, we choose a graph  $G^*\in \mathcal{H}_{n,t}$ by the following way:
Let $G_1^*\in\mathcal{K}_{N, N}^{k-1}(P_{2k})$. It  is a subgraph of some graph in $H^*\in \mathcal{K}_{n_1, n-n_1-t+1}^{k-1}(P_{2k})$.
Let $G^*=K_{t-1}\vee H^*$. Let  $G_2^*=G^*-V(G_1^*)$.
 By Claim \ref{c1},
$e(G_2^*)=e(K_{t-1}\vee(H^*-V(G_1^*)))=f(n-2N,t)$.
It is easy to see that
$e(G_1^*, G_2^*)=(n-2N-t+1)N+2N(t-1)=N(n-2N+t-1)$. Then
\[
f(n,t)=e(G^*)=e(G_1^*)+f(n-2N,t)+N(n-2N+t-1).
\]

\noindent
{\bf Case 1.} $\phi(n)<\phi(n-1)$ or $\phi(n)<\phi(n-2N)$.

Note that $\frac{n}{2}<n-1<n$ and $\frac{n}{2}<n-2N<n$ as  $n>4N$.
By Lemma \ref{ind}, for some $N_1$ with $n>N_1$,  $G\in \mathcal{H}_{n,t}$ as $G\in \mathcal{G}_n$. So for sufficiently large $n$, $G\in \mathcal{H}_{n,t}$, as desired.

\noindent
{\bf Case 2.}  $\phi(n)\ge \phi(n-1)$ and $\phi(n)\ge \phi(n-2N)$.

\begin{Claim} \label{yy}
$tN\ge N(n-2N+t-1)-e(G_1, G_2)$.
\end{Claim}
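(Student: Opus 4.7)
The plan is to use the Case~2 hypothesis $\phi(n)\ge\phi(n-2N)$ to turn the claim into an upper bound on $e(G_1)$, and then establish that bound using $F$-freeness of $G_1$ together with $G_1\supseteq K_{N,N}$. First, since $G_2$ is $F$-free of order $n-2N$, we have $e(G_2)\le\mathrm{ex}(n-2N,F)=f(n-2N,t)+\phi(n-2N)\le f(n-2N,t)+\phi(n)$ by the Case~2 assumption. Combined with $e(G)=f(n,t)+\phi(n)$ and the setup identity $f(n,t)=e(G_1^*)+f(n-2N,t)+N(n-2N+t-1)$, this yields
\[
e(G_1)+e(G_1,G_2)=e(G)-e(G_2)\ge e(G_1^*)+N(n-2N+t-1),
\]
equivalently $N(n-2N+t-1)-e(G_1,G_2)\le e(G_1)-e(G_1^*)$. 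Hence the claim reduces to the bound $e(G_1)\le e(G_1^*)+tN$, which, after writing $e(G_1)=N^2+e(G_1[X])+e(G_1[Y])$ (since $G_1\supseteq K_{N,N}$), becomes $e(G_1[X])+e(G_1[Y])\le\lfloor(k-1)N/2\rfloor+tN$.

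To establish this internal-edge bound, I would exploit the structural observation that any vertex $v\in X$ with $d_{G_1[X]}(v)\ge k$ is the center of a $P_1\vee P_{2k}$ in $G_1$: choosing any $k$ of its $X$-neighbors $x_1,\dots,x_k$ and any $k$ vertices $y_1,\dots,y_k\in Y$ produces the alternating path $y_1x_1y_2x_2\cdots y_kx_k$ in $K_{N,N}$, with $v$ adjacent to every path vertex; the symmetric statement holds on the $Y$-side. A greedy extraction, combined with Lemma~\ref{MD} applied to $G_1[X]$ and $G_1[Y]$ with max-degree cap $k-1$, should show that if the total internal edge count exceeded $\lfloor(k-1)N/2\rfloor+tN$, then $t$ vertex-disjoint copies of $P_1\vee P_{2k}$ could be packed into $G_1\subseteq G$, contradicting $F$-freeness.

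The hard part will be pinning down the tight slack $tN$. The naive strategy of removing a maximum family of at most $t-1$ disjoint fans and then applying Theorem~\ref{T1} to the residual $(P_1\vee P_{2k})$-free subgraph of order $2N-O(tk)$ yields only $e(G_1)-e(G_1^*)=O(tkN)$, which is far too loose when $k$ is large compared to $t$. The sharper accounting will need to exploit both sides of the bipartition $(X,Y)$ symmetrically and use the full $K_{N,N}$-skeleton as a reservoir of fresh path vertices, so that each surplus internal edge beyond the $(k-1)$-regular threshold can be charged to at most one of the $tN$ available units of slack before a $t$-th disjoint fan is forced.
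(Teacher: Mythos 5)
Your first paragraph is exactly the paper's reduction: using $\phi(n)\ge\phi(n-2N)$, $e(G_2)\le \mathrm{ex}(n-2N,F)$, the decomposition $e(G)=e(G_1)+e(G_2)+e(G_1,G_2)$ and the identity $f(n,t)=e(G_1^*)+f(n-2N,t)+N(n-2N+t-1)$, the claim reduces to $e(G_1)\le e(G_1^*)+tN$. The gap is that you never actually prove this last inequality. The paper gets it almost for free from Lemma \ref{L1}: $G_1$ is $F$-free of order $2N$ with the balanced partition $(X,Y)$, and since all $X$--$Y$ edges are present (the $K_{N,N}$), any at most $t(k+1)$ vertices on one side have all $N\ge t(2k+1)$ vertices of the other side as common neighbors, so the hypothesis of Lemma \ref{L1} holds trivially and $e(G_1)\le f(2N,t)$; a direct computation of $f(2N,t)$ then gives $f(2N,t)-\bigl(N^2+\lfloor\tfrac{(k-1)N}{2}\rfloor\bigr)\le \binom{t-1}{2}+(t-1)N-\tfrac{(6t-k-5)(2t+k-3)}{16}\le tN$ for $N$ large. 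In other words, the quantitative heart of the claim is already encapsulated in the earlier lemma (whose $t=1$ base case is Theorem \ref{T1}), and the proof of Claim \ref{yy} is meant to be a short computation on top of it.

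Your proposed substitute for this step is only a sketch, and by your own admission it does not reach the required slack: you estimate that the greedy fan-extraction gives $e(G_1)-e(G_1^*)=O(tkN)$, which is useless here since the whole point is the bound $tN$ independent of $k$. Concretely, after removing a maximal family of at most $t-1$ disjoint fans, you can indeed force every remaining vertex to have internal degree at most $k-1$, but that only bounds the residual internal edges by about $(k-1)N$, i.e.\ both sides could a priori be nearly $(k-1)$-regular, overshooting the target $\lfloor\tfrac{(k-1)N}{2}\rfloor+tN$ by roughly $\tfrac{(k-1)N}{2}$; ruling this out (showing the internal edges concentrate on one side) is essentially the content of Theorem \ref{T1}, not something a greedy count supplies. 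The appeal to Lemma \ref{MD} is also misplaced: it requires bounded matching number, and $G_1[X]$ can have matching number $\Theta(N)$ (e.g.\ a perfect matching) while being fan-free. So the argument as written has a genuine hole precisely at the inequality $e(G_1)\le e(G_1^*)+tN$; invoking Lemma \ref{L1} for $G_1$ with the partition $(X,Y)$ is the intended (and essentially unavoidable) fix.
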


\begin{proof}
By Lemma \ref{L1}, we have
\begin{align*}
e(G_1)&\le f(2N, t)\\
%=&\max\left\{{t-1 \choose 2}+(n_1'+t-1)(2N-t+1)+\left\lfloor\frac{k-1}{2}n_1'\right\rfloor-n_1'^2: n_1'\ge N-\frac{t-1}{2}\right\}\\
&\le {t-1\choose 2}+\left(\frac{2N-t+1}{2}+\frac{k-1}{4}+t-1\right)(2N-t+1)\\
&\quad +\frac{k-1}{2}\left(\frac{2N-t+1}{2}+\frac{k-1}{4}\right)-\left(\frac{2N-t+1}{2}+\frac{k-1}{4}\right)^2\\
&= {t-1\choose 2}+N^2+\frac{2t+k-3}{2}N-\frac{(6t-k-5)(2t+k-3)}{16},
\end{align*}
so
\begin{align*}
e(G_1)-e(G_1^*) &\le f(2N, t)-\left(N^2+\frac{(k-1)N}{2}\right)\\
&\le  {t-1\choose 2}+(t-1)N-\frac{(6t-k-5)(2t+k-3)}{16}\\
& \le tN
\end{align*}
as $N$ is large enough.
Thus
\begin{align*}
0&\le  e(G)-f(n,t)-(\mathrm{ex}(n-2N, F)-f(n-2N,t))\\
& \le  e(G)-f(n,t)-(e(G_2)-f(n-2N,t))\\
&= e(G_1)- e(G_1^*)+ e(G_1, G_2)-N(n-2N+t-1)\\
&\le tN+e(G_1, G_2)-N(n-2N+t-1),
\end{align*}
implying the desired statement.
\end{proof}

\begin{Claim} \label{ddd1} $\delta(G)\ge \frac{n}{2}$.
\end{Claim}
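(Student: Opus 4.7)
The plan is to argue by contradiction. Suppose some $v \in V(G)$ has $d_G(v) < n/2$, and let $G' = G - v$. Since $G'$ is $F$-free of order $n-1$, we have $e(G') \le \mathrm{ex}(n-1, F) = f(n-1, t) + \phi(n-1)$. Using the Case~2 hypothesis $\phi(n-1) \le \phi(n)$ together with $e(G) = f(n, t) + \phi(n)$ gives
\[
d_G(v) = e(G) - e(G') \ge f(n, t) - f(n-1, t).
\]
So the entire claim reduces to showing that $f(n, t) - f(n-1, t) \ge \lceil n/2 \rceil$ for sufficiently large $n$: this forces $d_G(v) \ge \lceil n/2 \rceil \ge n/2$, contradicting $d_G(v) < n/2$.

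To evaluate the difference, note that the quantity $(n-t+1)/2 + (k-1)/4$ defining the optimal $n_1(n)$ increases by exactly $1/2$ when $n$ increases by $1$, so $n_1(n) - n_1(n-1) \in \{0, 1\}$. Write $c = n_1(n-1)$ and split into two cases. In Case~(I), $n_1(n) = c$, and direct cancellation in the formula for $f$ leaves
\[
f(n, t) - f(n-1, t) = c + t - 1 \ge \left\lceil (n-t+1)/2 \right\rceil + t - 1 \ge \left\lceil n/2 \right\rceil,
\]
using $n_1 \ge n_2$ (which forces $c \ge \lceil (n-t+1)/2 \rceil$) and $t \ge 2$. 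In Case~(II), $n_1(n) = c+1$, and a similar expansion gives
\[
f(n, t) - f(n-1, t) = n - c - 1 + \bigl(\lfloor (k-1)(c+1)/2 \rfloor - \lfloor (k-1) c / 2 \rfloor\bigr).
\]
Here $c \le (n-t)/2 + (k-1)/4 + 3/4$ from the ceiling formula for $n_1(n-1)$, and the floor-difference in parentheses equals $(k-1)/2$ when $k$ is odd, or one of $(k-2)/2, k/2$ when $k$ is even, according to the parity of $(k-1)c$. The required bound $\ge \lceil n/2 \rceil$ then follows from a short case split by $k \bmod 2$ and $n - t \bmod 4$, using $k \ge 3$ and $t \ge 2$.

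The main obstacle lies in Case~(II): the ``continuous'' lower bound for $f(n,t) - f(n-1,t)$ is only $n/2 + O(1)$, and several floor/ceiling losses can each shave off up to a unit, so the generic estimate sometimes only yields $\ge n/2 - 1$. The resolution is the observation that whenever $c$ attains its upper bound (making the ceiling estimate for $c$ tight), the parity of $(k-1)c$ is forced in such a way that the floor-difference takes its larger value, restoring the needed unit of slack. Carrying out the finite case check---at most eight subcases determined by $k \bmod 2$ and $n-t \bmod 4$---confirms the inequality $f(n,t)-f(n-1,t) \ge \lceil n/2 \rceil$ in every situation and closes the claim.
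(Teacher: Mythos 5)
Your overall strategy is exactly the paper's: assume $d_G(v)<n/2$, delete $v$, use the Case~2 hypothesis $\phi(n)\ge\phi(n-1)$ to reduce everything to the arithmetic inequality $f(n,t)-f(n-1,t)\ge n/2$, and verify that inequality by comparing the optimal first-part sizes for orders $n$ and $n-1$, which differ by $0$ or $1$. Your Case~(I) and the reduction itself are fine, and the target inequality is indeed true (the paper proves the strict version $f(n,t)-f(n-1,t)>\frac n2$ by bounding $n_1\le\frac{2n-2t+k+3}{4}$ and the floor-difference by $\frac{k-1}2-\frac12$).

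The weak point is the way you close Case~(II). The ``resolution'' you invoke --- that whenever $c=n_1(n-1)$ attains its ceiling upper bound $\frac{n-t}{2}+\frac{k-1}{4}+\frac34$, the parity of $(k-1)c$ is forced so that the floor-difference takes its larger value $\frac k2$ --- is not what actually happens. Writing $q=2n-2t+k$, one checks that Case~(II) occurs precisely when $q\equiv1,2\pmod4$, and then $c$ equals $\frac{q-1}{4}$ or $\frac{q-2}{4}$, i.e.\ it is always at least $\frac34$ \emph{below} your generic upper bound $\frac{q+2}{4}$ (near-tightness of that bound would put you in Case~(I)); moreover the parity of $c$ is genuinely not forced in the delicate even-$k$ situation. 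For instance, with $t=2$, $k=4$, $n\equiv1\pmod4$ one has $c=\frac{n-1}{2}$ even, so the floor-difference is the \emph{smaller} value $\frac{k-2}{2}$, and the inequality survives only because the exact value of $c$ is used: $f(n,t)-f(n-1,t)=n-c-1+\frac{k-2}{2}=\frac{n+1}{2}$. So the correct mechanism is: in Case~(II) plug in the exact values $c\in\{\frac{q-1}{4},\frac{q-2}{4}\}$ (and, for odd $k$, the exact floor-difference $\frac{k-1}{2}$), which yields $f(n,t)-f(n-1,t)\ge\frac n2+\frac{2t+k-6}{4}\ge\frac n2+\frac12$ for $t\ge2$, $k\ge3$. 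Since you left the finite check unexecuted and the one concrete justification you gave for the borderline cases is false as stated, the decisive step of your argument needs to be repaired along these lines (after which it coincides with the paper's computation); also note that a split by $k\bmod2$ and $n-t\bmod4$ alone does not determine the relevant quantities --- the residue of $2n-2t+k$ modulo $4$, hence $k\bmod4$, is what pins down $c$.
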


\begin{proof}
Suppose by contradiction that  $\exists u\in V(G)$ such that $d_G(u)<\frac{n}{2}$.
Let
\[
n_1'=\begin{cases}
\left\lfloor\frac{n-t}{2}+\frac{k-1}{4}\right\rfloor & \mbox{if~}2n-2t+k\equiv 2\!\!\pmod{4},\\
\left\lceil\frac{n-t}{2}+\frac{k-1}{4}\right\rceil & \mbox{otherwise}.
\end{cases}
\]
Then
\begin{align*}
f(n,t)-f(n-1,t)&=(n_1+t-1)(n-t+1)+\left\lfloor\frac{(k-1)n_1}{2}\right\rfloor-n_1^2\\
&\quad -(n_1'+t-1)(n-t)-\left\lfloor\frac{(k-1)n_1'}{2}\right\rfloor+n_1'^2.
\end{align*}
It is easy to see that $n_1'=n_1$ or $n_1'=n_1-1$.
In the former case,
\[
f(n,t)-f(n-1,t)=n_1+t-1\ge\frac{n-t+1}{2}+t-1>\frac{n}{2},
\]
and in the latter case,
\begin{align*}
f(n,t)-f(n-1,t)&=n-n_1+\left\lfloor\frac{(k-1)n_1}{2}\right\rfloor-\left\lfloor\frac{(k-1)(n_1-1)}{2}\right\rfloor\\
&\ge n-n_1+\frac{k-1}{2}-\frac{1}{2}\\
&\ge n-\frac{2n-2t+k+3}{4}+\frac{k-1}{2}-\frac{1}{2}\\
&>\frac{n}{2}.
\end{align*}
So we have $f(n,t)-f(n-1,t)>\frac{n}{2}$ in either case. As $G-u$ is $F$-free, we have
\[
e(G)=e(G-u)+d_{G}(u)< \mathrm{ex}(n-1,F)+\frac{n}{2}<\mathrm{ex}(n-1,F)+f(n,t)-f(n-1,t).
\]
So
\[
\phi(n)=e(G)-f(n,t)<\mathrm{ex}(n-1, F)-f(n-1,t)=\phi(n-1),
\]
a contradiction.
\end{proof}

%By Claim \ref{ddd1},  $\delta(G)\ge\frac{n}{2}$.

\begin{Claim}\label{c4}
If  $G[N(v)]$ contains $K_{t(2k+1), t(2k+1)}$ for some $v\in V(G)$, then
%for sufficiently large $n$,
$G\in \mathcal{H}_{n,t}$.
\end{Claim}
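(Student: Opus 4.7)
The plan is to leverage the extremality of $G$ together with the running induction on $t$ in the proof of Theorem~\ref{N1}: the vertex $v$ will play the role of one of the ``dominating'' vertices of the $K_{t-1}$ part of the conjectured extremal structure, and we peel it off. Concretely, I aim to show that $G-v$ is $(t-1)(P_1\vee P_{2k})$-free, apply the inductive hypothesis to $G-v$, and then force equality in an upper bound on $e(G)$ that exactly matches $f(n,t)$.

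First I would prove that $G-v$ is $F'$-free, where $F'=(t-1)(P_1\vee P_{2k})$. Suppose to the contrary that some $W\subseteq V(G-v)$ with $|W|=(t-1)(2k+1)$ carries a copy of $F'$. Let $(X,Y)$ denote the bipartition of the $K_{t(2k+1),t(2k+1)}$ sitting inside $G[N(v)]$. Both $X\setminus W$ and $Y\setminus W$ have size at least $2k+1\ge k$, so I can pick $k$ vertices from each; the edges between them furnish a $K_{k,k}$, which has a Hamilton path $P_{2k}$ obtained by alternating sides. Joining $v$ to this $P_{2k}$ produces a $P_1\vee P_{2k}$ centered at $v$ and disjoint from $W$; combined with the copy of $F'$ on $W$, this gives a copy of $F=t(P_1\vee P_{2k})$ in $G$, contradicting $F$-freeness.

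By the inductive hypothesis (Theorem~\ref{N1} with $t$ replaced by $t-1$), applied to the $F'$-free graph $G-v$ of sufficiently large order $n-1$, we have $e(G-v)\le f(n-1,t-1)$, with equality only when $G-v\in\mathcal{H}_{n-1,t-1}$. Invoking the identity $f(n,t)-f(n-1,t-1)=n-1$ recorded in \eqref{e1},
\[
e(G)=d_G(v)+e(G-v)\le (n-1)+f(n-1,t-1)=f(n,t).
\]
Since $G$ is extremal and $\mathcal{H}_{n,t}$ already supplies $F$-free graphs of size $f(n,t)$ by Claim~\ref{c1}, equality must hold throughout. Hence $d_G(v)=n-1$ (so $v$ is adjacent to every other vertex) and $G-v\in\mathcal{H}_{n-1,t-1}$, i.e., $G-v=K_{t-2}\vee H$ for some $H\in\mathcal{K}^{k-1}_{m_1,n-m_1-t+1}(P_{2k})$ with $m_1=n_1(n-1,t-1)$. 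Attaching $v$, joined to everything, yields $G=K_{t-1}\vee H$.

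The only remaining point, which I view as the one potential obstacle but which is a routine verification, is the alignment of the partition parameters: both $n_1(n,t)$ and $n_1(n-1,t-1)$ are defined through the common quantity $n-t+1=(n-1)-(t-1)+1$ and the common congruence class $2n-2t+k\equiv 2(n-1)-2(t-1)+k\pmod 4$, so they coincide. Therefore $m_1=n_1(n,t)$ and $G\in\mathcal{H}_{n,t}$, as required. The bulk of the work is the disjointness construction in the second paragraph; once $G-v$ is known to be $F'$-free, the induction and Claim~\ref{c1} do the rest.
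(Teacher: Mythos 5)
Your proposal is correct and follows essentially the same route as the paper: show $G-v$ is $(t-1)(P_1\vee P_{2k})$-free by extracting a $K_{k,k}$ (hence a $P_{2k}$) inside $N(v)$ disjoint from the assumed $(t-1)(P_1\vee P_{2k})$, then apply the inductive hypothesis to $G-v$ together with the identity \eqref{e1} to force $d_G(v)=n-1$ and $G-v\in\mathcal{H}_{n-1,t-1}$, giving $G\in\mathcal{H}_{n,t}$. Your explicit check that the parameters $n_1(n,t)$ and $n_1(n-1,t-1)$ coincide is a detail the paper leaves implicit, but the argument is the same.
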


\begin{proof}
Suppose that $G-v$ contains  $(t-1)(P_1\vee P_{2k})$.
Note that  $G[N(v)]$ contains $K_{t(2k+1),t(2k+1)}$, in which
there is a copy of  $K_{2k+1, 2k+1}$ so a copy of $K_{k, k}$ that is vertex disjoint from $(t-1)(P_1\vee P_{2k})$.
So $G$ contains $F$, a contradiction. It follows that $G-v$ is $(t-1)(P_1\vee P_{2k})$-free.

By the inductive hypothesis, for sufficiently large $n$,
\[
\mathrm{ex}(n-1, (t-1)(P_1\vee P_{2k}))=f(n-1,t-1) \mbox{ and }
\mathrm{Ex}(n-1, (t-1)(P_1\vee P_{2k}))=\mathcal{H}_{n-1,t-1},
\]
so by (\ref{e1}),
\begin{align*}
f(n,t) & \le e(G)= d_G(v)+e(G-v)\le n-1+e(G-v)\\
&\le n-1+\mathrm{ex}(n-1, (t-1)(P_1\vee P_{2k}))
=n-1+f(n-1, t-1)=f(n,t),
\end{align*}
from which we have $e(G)=f(n,t)$, $d_G(v)=n-1$, $e(G-v)=f(n-1,t-1)$, and  $G-v\in \mathcal{H}_{n-1,t-1}$,
so $G\in \mathcal{H}_{n,t}$.
\end{proof}

If  $d_X(v_0)\ge t(2k+1)$ and $d_Y(v_0)\ge t(2k+1)$ for some $v_0\in V(G)$, then $G[N(v_0)]$ contains $K_{t(2k+1),t(2k+1)}$, so Claim \ref{c4} implies that for sufficiently large $n$, $G\in \mathcal{H}_{n,t}$, as desired.

Suppose  that for any $v\in V(G)$, $d_X(v)< t(2k+1)$ or $d_Y(v)< t(2k+1)$.
Let
\[
X_1=\{v\in V(G)\setminus(X\cup Y): d_X(v)<t(2k+1),  d_Y(v)>N-2t(2k+1)\},\]
\[
Y_1=\{v\in V(G)\setminus(X\cup Y): d_Y(v)<t(2k+1),  d_X(v)>N-2t(2k+1)\}
\]
and
\[
Z=V(G)\setminus(X\cup Y\cup X_1\cup Y_1).
\]

Suppose that there exists $u\in X\cup X_1$ such that $d_{X\cup X_1}(u)\ge t(2k+1)$ or $v\in Y\cup Y_1$ such that $d_{Y\cup Y_1}(v)\ge t(2k+1)$.
By Lemma \ref{cap}, any $t(2k+1)+1$ vertices in $X\cup X_1$ (or $Y\cup Y_1$) have at least
\[
 (t(2k+1)+1)(N-2t(2k+1)+1)-t(2k+1)N=N-(t(2k+1)+1)(2t(2k+1)-1)
\]
common neighbors in $Y$ (or $X$). As $N$  is large enough, this number can be larger than $t(2k+1)$.
So $G[N(u)]$ (or $G[N(v)]$) contains $K_{t(2k+1),t(2k+1)}$.
By Claim \ref{c4}, $G\in \mathcal{H}_{n,t}$  for sufficiently large $n$.

Suppose in the following that $d_{X\cup X_1}(u)<t(2k+1)$ for any $u\in X\cup X_1$, $d_{Y\cup Y_1}(v)<t(2k+1)$ for any $v\in Y\cup Y_1$.

\begin{Claim}\label{c5}
$|Z|<\frac{15}{7}N$.
\end{Claim}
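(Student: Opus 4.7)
The plan is to combine the minimum-degree bound $\delta(G)\ge n/2$ from Claim~\ref{ddd1} with the standing assumption that $d_{X\cup X_1}(u)<T$ for every $u\in X\cup X_1$ and $d_{Y\cup Y_1}(v)<T$ for every $v\in Y\cup Y_1$, where I write $T:=t(2k+1)$ for brevity.

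First I would show that $d_{X\cup Y}(v)\le N-T-1$ for every $v\in Z$. Indeed, since $v\notin X\cup Y$ and the global hypothesis forces $d_X(v)<T$ or $d_Y(v)<T$, in the former case $v\notin X_1$ gives $d_Y(v)\le N-2T$, so $d_X(v)+d_Y(v)\le(T-1)+(N-2T)=N-T-1$, and the latter case is symmetric via $v\notin Y_1$. Summing,
\[
e(X,Z)+e(Y,Z)=\sum_{v\in Z}d_{X\cup Y}(v)\le(N-T-1)|Z|.
\]

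Next, for each $u\in X$ the minimum-degree bound and $d_{X\cup X_1}(u)<T$ give $d_{Y\cup Y_1\cup Z}(u)>\tfrac{n}{2}-T$, and summing over $u\in X$ together with the symmetric sum over $v\in Y$ yields
\[
2e(X,Y)+e(X,Y_1)+e(Y,X_1)+e(X,Z)+e(Y,Z)>Nn-2NT.
\]
Upper-bounding the left side by $e(X,Y)\le N^2$, $e(X,Y_1)\le N|Y_1|$, $e(Y,X_1)\le N|X_1|$ and the inequality from the first step, and then substituting the partition identity $|X_1|+|Y_1|=n-2N-|Z|$, the left side collapses to $Nn-(T+1)|Z|$. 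Therefore $(T+1)|Z|<2NT$, and so
\[
|Z|<\frac{2NT}{T+1}<2N<\frac{15N}{7},
\]
using $2=\tfrac{14}{7}<\tfrac{15}{7}$.

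The main obstacle is to realize that the $|X_1|$ and $|Y_1|$ contributions telescope exactly against the identity $|X_1|+|Y_1|=n-2N-|Z|$ once the $Z$-contribution is controlled by $(N-T-1)|Z|$ from the first step; this cancellation is what forces $|Z|$ to be linear in $N$ rather than in $n$. Every other step is routine algebra combined with the almost-independence of $X\cup X_1$ and $Y\cup Y_1$.
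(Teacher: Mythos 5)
Your proof is correct, and it takes a genuinely different route from the paper's. The paper bounds $e(G_1,G_2)$ from above by splitting it as $e(X,X_1)+e(Y,Y_1)+e(X,Y_1)+e(Y,X_1)+e(X\cup Y,Z)$, using the standing degree hypotheses and the same observation you make about $Z$-vertices, and then plays this upper bound against the \emph{lower} bound on $e(G_1,G_2)$ coming from Claim~\ref{yy} (which itself rests on the near-extremality of $e(G)$, the estimate of $f(2N,t)$ and the induction on $t$); this yields $|Z|<\frac{4k+3}{2k+1}N\le\frac{15}{7}N$. You instead dispense with Claim~\ref{yy} entirely and use only the minimum-degree bound $\delta(G)\ge\frac n2$ of Claim~\ref{ddd1}: double counting the edges leaving $X$ and leaving $Y$, bounding $2e(X,Y)\le 2N^2$, $e(X,Y_1)+e(Y,X_1)\le N(|X_1|+|Y_1|)$ and $e(X\cup Y,Z)\le(N-T-1)|Z|$, and then using $|X_1|+|Y_1|+|Z|=n-2N$ (which holds since $X_1\cap Y_1=\emptyset$ for $N$ large, an identity the paper also uses implicitly), you get $(T+1)|Z|<2NT$, hence $|Z|<2N<\frac{15}{7}N$. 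Both ingredients (Claims~\ref{yy} and~\ref{ddd1}) are available at this point of Case~2, so your argument is legitimate; it is arguably more elementary, since it avoids the extremal-size comparison hidden in Claim~\ref{yy}, and it even gives a slightly stronger numerical bound, while the paper's version keeps the structural information of Claim~\ref{yy} in play, which it exploits elsewhere anyway.
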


\begin{proof}
Since
\[
\mbox{$d_{X\cup X_1}(u)<t(2k+1)$ for any $u\in X\cup X_1$}
\]
and
\[
\mbox{$d_{Y\cup Y_1}(v)<t(2k+1)$ for any $v\in Y\cup Y_1$},
\]
we have
\[
e(X,X_1)+e(Y,Y_1)<2Nt(2k+1).
\]
Trivially,
\[
e(X,Y_1)+e(Y,X_1)\le (|X_1|+|Y_1|)N.
\]
As for any $u\in Z$,
\[
d_{X\cup Y}(u)=d_X(u)+d_Y(u)<t(2k+1)+N-2t(2k+1)=N-t(2k+1),
\]
we have
\[
e(X\cup Y,Z)\le |Z|(N-t(2k+1)).
\]
It follows that
\begin{align*}
e(G_1, G_2)&= e(X,X_1)+e(Y,Y_1)+e(X,Y_1)+e(Y,X_1)+e(X\cup Y,Z)\\
&<2Nt(2k+1)+(|X_1|+|Y_1|)N+|Z|(N-t(2k+1))\\
&= (2N-|Z|)t(2k+1)+N(n-2N)\\
&= 4tkN+(n-2N+t-1)N+(t+1)N-|Z|t(2k+1).
\end{align*}
By Claim \ref{yy},
we have
\[
tN\ge (n-2N+t-1)N-e(G_1, G_2)>-4tkN-(t+1)N+|Z|t(2k+1).
\]
So
\[
|Z|<\frac{1}{t(2k+1)}(4tkN+2tN+N)<\frac{4k+3}{2k+1}N \le \frac{15}{7}N. \qedhere
\]
\end{proof}

\begin{Claim}\label{c6}
$|X\cup X_1|=\frac{n}{2}+O(\sqrt{n})$ and $|Y\cup Y_1|=\frac{n}{2}+O(\sqrt{n})$.
\end{Claim}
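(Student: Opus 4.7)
The plan is to compare the edge count of $G$ against $f(n,t) \approx \frac{n^2}{4}$ using the partition $V(G) = (X\cup X_1) \cup (Y\cup Y_1) \cup Z$. Write $a = |X\cup X_1|$ and $b = |Y\cup Y_1|$, so $a + b = n - |Z|$, and recall from Claim~\ref{c5} that $|Z| < \frac{15}{7}N$; since $N$ is treated as a (large, but fixed) constant independent of $n$, we have $|Z| = O(1)$.

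The first step is to establish the upper bound $e(G) \le ab + O(n)$. By the standing assumption imposed just before Claim~\ref{c5}, $d_{X\cup X_1}(u) < t(2k+1)$ for every $u \in X\cup X_1$ and $d_{Y\cup Y_1}(v) < t(2k+1)$ for every $v \in Y\cup Y_1$; hence the number of edges inside $X\cup X_1$ (respectively inside $Y\cup Y_1$) is at most $\frac{at(2k+1)}{2}$ (respectively $\frac{bt(2k+1)}{2}$), both of which are $O(n)$. The edges incident to $Z$ number at most $|Z| \cdot n = O(n)$, and all remaining edges lie between $X\cup X_1$ and $Y\cup Y_1$, contributing at most $ab$. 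Summing these contributions gives $e(G) \le ab + O(n)$.

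The matching lower bound $e(G) \ge f(n,t)$ comes from the extremality of $G$ together with Claim~\ref{c1}, which guarantees that $\mathcal{H}_{n,t}$ is a family of $F$-free graphs of size exactly $f(n,t)$. A direct inspection of the formula for $f(n,t)$ shows $f(n,t) = \frac{n^2}{4} + O(n)$, so combining with the upper bound yields $ab \ge \frac{n^2}{4} - O(n)$. Using $a + b = n - |Z| \le n$, we then obtain
\[
(a-b)^2 = (a+b)^2 - 4ab \le (n-|Z|)^2 - n^2 + O(n) \le O(n),
\]
hence $|a-b| = O(\sqrt{n})$. Together with $a + b = n - O(1)$, this forces $a = \frac{n}{2} + O(\sqrt{n})$ and $b = \frac{n}{2} + O(\sqrt{n})$, as claimed.

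The main subtlety — not really an obstacle — is the bookkeeping of constants: one must ensure that $N$, and consequently $|Z|$ and the degree threshold $t(2k+1)$, are treated as fixed quantities so they absorb cleanly into $O(1)$ and $O(n)$. This is the standard convention in such stability arguments, and with it the two inequalities $e(G) \le ab + O(n)$ and $e(G) \ge \frac{n^2}{4} - O(n)$ pin down the partition sizes to within $O(\sqrt{n})$ of balance.
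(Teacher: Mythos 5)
Your proposal is correct and follows essentially the same route as the paper: both arguments bound the edges inside $X\cup X_1$ and $Y\cup Y_1$ by the degree condition $d_{X\cup X_1}(u),d_{Y\cup Y_1}(v)<t(2k+1)$ and the edges meeting $Z$ by $|Z|=O(1)$ (via Claim \ref{c5} with $N$ fixed), then play the resulting bipartite bound $ab+O(n)$ against the lower bound $e(G)\ge \frac{n^2}{4}-O(n)$ to force $|a-b|=O(\sqrt{n})$. The only cosmetic difference is that the paper invokes $e(G)>e(K_{\lceil n/2\rceil,\lfloor n/2\rfloor})=\lfloor n^2/4\rfloor$ already established earlier, while you use $e(G)\ge f(n,t)$ from extremality; these are interchangeable here.
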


\begin{proof}
By Claim \ref{c5}, $|Z|<\frac{15}{7}N$, so the number of edges incident to vertices in  $Z$ is $O(n)$.
Recall that  $d_{X\cup X_1}(u)<t(2k+1)$ for any vertex $u\in X\cup X_1$ and $d_{Y\cup Y_1}(v)<t(2k+1)$ for any $v\in Y\cup Y_1$.
So $e(G[X\cup X_1])+e(G[Y\cup Y_1])=O(n)$.
Let
\[
G'=G-E(G[X\cup X_1])-E(G[Y\cup Y_1])-E(Z, V(G)),
 \]
which is a bipartite graph with size $\lfloor\frac{n^2}{4}\rfloor-O(n)$.
Then there exists a constant $N_2$ such that $||X\cup X_1|-\frac{n}{2}|\le N_2\sqrt{n}$ and $||Y\cup Y_1|-\frac{n}{2}|\le N_2\sqrt{n}$.
So $|X\cup X_1|=\frac{n}{2}+O(\sqrt{n})$ and $|Y\cup Y_1|=\frac{n}{2}+O(\sqrt{n})$.
\end{proof}

For any $v\in Z$, we have  either $d_{X\cup X_1}(v)\ge\frac{n}{6}$ or $d_{Y\cup Y_1}(v)\ge\frac{n}{6}$, as otherwise,
\[
d_G(v)<d_{X\cup X_1}(v)+d_{Y\cup Y_1}(v)+d_Z(v)\le \frac{n}{6}\cdot 2+|Z|-1<\frac{n}{3}+\frac{15}{7}N-1<\frac{n}{2}
\]
as $n$ is sufficiently large and $N<\frac{n}{15}$, contradicting Claim \ref{ddd1}.
By Claim \ref{c6}, we have $|Z|=n-|X\cup X_1|-|Y\cup Y_1|=O(\sqrt{n})$.
So for any $v\in Y\cup Y_1$,
\[
d_{X\cup X_1}(v)=d_G(v)-d_{Y\cup Y_1}(v)-d_Z(v)>\frac{n}{2}-t(2k+1)-|Z|= \frac{n}{2}-O(\sqrt{n}).
\]
Similarly,
\[
d_{Y\cup Y_1}(v)> \frac{n}{2}-O(\sqrt{n}) \mbox{ for any $v\in X\cup X_1$}.
\]
Suppose that  $v_1\in Z$  with  $d_{X\cup X_1}(v_1)\ge\frac{n}{6}$ such that $d_{Y\cup Y_1}(v_1)\ge t(2k+1)$.  By Lemma \ref{cap}, and Claim \ref{c6}, for any $v\in N_{Y\cup Y_1}(v_1)$,
\begin{align*}
|N_{X\cup X_1}(v)\cap N_{X\cup X_1}(v_1)|&\ge
|N_{X\cup X_1}(v)|+|N_{X\cup X_1}(v_1)|-|X\cup X_1|\\
&> \frac{n}{2}-O(\sqrt{n})+d_{X\cup X_1}(v_1)-\left(\frac{n}{2}+O(\sqrt{n})\right)\\
&=d_{X\cup X_1}(v_1)-O(\sqrt{n}).
\end{align*}
So, by Lemma \ref{cap}, any $t(2k+1)$ vertices in $N_{Y\cup Y_1}(v_1)$ have more than
\[
t(2k+1)(d_{X\cup X_1}(v_1)-O(\sqrt{n}))-(t(2k+1)-1)d_{X\cup X_1}(v_1)
\ge \frac{n}{6}-O(\sqrt{n})>t(2k+1)
\]
common neighbors in $N_{X\cup X_1}(v_1)$ as $n$ is sufficiently large, then
there is a copy of $K_{t(2k+1), t(2k+1)}$ in $G[N(v_1)]$. Then the result follows from Claim \ref{c4}.
The proof is similar if $v_1\in Z$  with  $d_{Y\cup Y_1}(v_1)\ge\frac{n}{6}$ such that $d_{X\cup X_1}(v_1)\ge t(2k+1)$.

We are left with the case that % $d_{X\cup X_1}(v)<t(2k+1)$ and $d_{Y\cup Y_1}(v_1)<t(2k+1)$ for any $v\in Z$.  Let
$Z=Z_1\cup Z_2$ with
\[
Z_1=\left\{v\in V(G)\setminus(X\cup Y\cup X_1\cup Y_1): d_{X\cup X_1}(v)<t(2k+1), d_{Y\cup Y_1}(v)\ge\frac{n}{6}\right\}
 \]
 and
\[
Z_2=\left\{v\in V(G)\setminus(X\cup Y\cup X_1\cup Y_1): d_{Y\cup Y_1}(v)<t(2k+1), d_{X\cup X_1}(v)\ge\frac{n}{6}\right\}.
\]
%\begin{Claim}\label{C4}
%$Z=Z_1\cup Z_2$, where $Z_1=\{v\in V(G): d_{X\cup X_1}(v)<t(2k+1)\}$ and $Z_2=\{v\in V(G): d_{Y\cup Y_1}(v)<t(2k+1)\}$.
%\end{Claim}
%
%\begin{proof}
%For any $v\in Z$, we have  either $d_{X\cup X_1}(v)\ge\frac{n}{6}$ or $d_{Y\cup Y_1}(v)\ge\frac{n}{6}$.
%Otherwise, $d_G(v)<\frac{n}{3}+|Z|-1<\frac{n}{3}+N_1-1<\frac{n}{2}$, contradicting Claim \ref{ddd1}.
%Since $d_{Y\cup Y_1}(u)\ge \frac{n}{2}-O(n)$ (resp. $d_{X\cup X_1}(u)\ge \frac{n}{2}-O(n)$)for any $u\in X\cup X_1$ (resp. $v\in Y\cup Y_1$),
%
%Suppose that $v_1\in Z$ such that $d_{X\cup X_1}(v_1)\ge\frac{n}{6}$.
%If $d_{Y\cup Y_1}(v_1)\ge t(2k+1)$, then there is a copy of $K_{t(2k+1), t(2k+1)}$ in $G[N(v_1)]$, a contradicing Claim \ref{C1}.
%So $v_1\in Z_2$.
%Similarly, if $v_2\in Z$ with $d_{Y\cup Y_1}(v)\ge\frac{n}{6}$, then $d_{X\cup X_1}(v_2)<t(2k+1)$, i.e., $v_2\in Z_1$.
%Therefore, $Z=Z_1\cup Z_2$.
%\end{proof}
Assume that $|X\cup X_1\cup Z_1|\ge|Y\cup Y_1\cup Z_2|$.
Let $V_1=X\cup X_1\cup Z_1$ and $V_2=Y\cup Y_1\cup Z_2$.
For any $v\in V_i$, %as $d_{X\cup X_1}(v)\le t(2k+1)$,
we have
\[
d_{V_{3-i}}(v)=d_G(v)-d_{V_i}(v)\ge\frac{n}{2}-(t(2k+1)+O(\sqrt{n}))=\frac{n}{2}-O(\sqrt{n}).
 \]
%By Claim \ref{c6}, $|Z|=O(\sqrt{n})$.
By Lemma \ref{cap}, for any nonempty set $S\subseteq V_i$ with $|S|\le t(k+1)$,
\begin{align*}
\left|\bigcap_{v\in S} N_{V_{3-i}}(v)\right|& \ge |S|\left(\frac{n}{2}-O(\sqrt{n})\right)-(|S|-1)|V_{3-i}|\\
&\ge |S|\left(\frac{n}{2}-O(\sqrt{n})\right)-(|S|-1)\left(\frac{n}{2}+O(\sqrt{n})\right)\\
&\ge\frac{n}{2}+O(\sqrt{n})\\
&>t(2k+1)
\end{align*}
for  sufficiently large $n$.
Then by Lemma \ref{L1},  $f(n,t)\le e(G)\le f(n,t)$, so there exists a vertex of degree $n-1$ in $G$, which is impossible as $d_{V_i\setminus Z_i}(u)\le t(2k+1)$ for any $u\in V_i$ and $|V_i\setminus Z_i|=\frac{n}{2}+O(\sqrt{n})$ by Claim \ref{c6}. This completes the proof.
\end{proof}

\section{Proof of Theorem \ref{N2}}

\begin{Lemma} \label{FINa}
Let $t$, $k$ be two  integers with $t\ge1$ and $k\ge3$. For sufficiently large $n$,
\begin{align*}
\frac{n}{4}\left(n+k+2t-\frac{7}{2}\right)<\mathrm{ex}(n, t(P_1\vee P_{2k}))<\frac{n^2}{4}+\frac{2t+k-2}{4}n.
\end{align*}
\end{Lemma}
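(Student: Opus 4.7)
The plan is to invoke Theorem~\ref{N1}, which for sufficiently large $n$ gives the exact value $\mathrm{ex}(n, t(P_1\vee P_{2k})) = f(n,t)$, where $f(n,t)$ is defined by the expression at the start of Section~3 with a specific integer $n_1$ lying within distance $1$ of
\[
n_1^* := \frac{n-t+1}{2}+\frac{k-1}{4}.
\]
First substitute $n_1 = n_1^*$ into $f$, drop the floor around $\tfrac{(k-1)n_1}{2}$ (which costs $O(1)$), and expand. Because $-n_1^2$ cancels half of $n_1(n-t+1)$ and half of $\tfrac{(k-1)n_1}{2}$, the cross terms in $\tfrac{k-1}{4}$ telescope and a routine calculation yields
\[
f(n,t) = \frac{n^2}{4} + \frac{2t+k-3}{4}\,n + C_{t,k},
\]
where $C_{t,k}$ is a constant depending only on $t$ and $k$. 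Since $f(n_1,n_2,t)$ is quadratic in $n_1$ with leading coefficient $-1$ and is maximized at $n_1 = n_1^*$, replacing $n_1^*$ by its floor or ceiling perturbs the value by at most $O(1)$, so the displayed formula holds for the exact value $\mathrm{ex}(n,t(P_1\vee P_{2k}))$ up to an $O(1)$ additive term.

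The two required inequalities then reduce to comparing linear-in-$n$ coefficients. On the upper side,
\[
\frac{n^2}{4}+\frac{2t+k-2}{4}\,n - f(n,t) = \frac{n}{4} - C_{t,k} - O(1),
\]
which is strictly positive for $n$ sufficiently large, giving the upper bound. On the lower side,
\[
f(n,t) - \frac{n}{4}\!\left(n+k+2t-\tfrac{7}{2}\right) = \frac{n}{8} + C_{t,k} - O(1),
\]
again strictly positive for $n$ sufficiently large, giving the lower bound. Both targets differ from the exact asymptotic value by a quantity of order $\Theta(n)$ (specifically $\tfrac{n}{4}$ above and $\tfrac{n}{8}$ below), so there is ample room.

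The main difficulty is purely bookkeeping: correctly expanding $f(n_1^*,n-t+1-n_1^*,t)$, isolating the constant $C_{t,k}$, and checking that the integrality corrections (the floor in the size formula and the rounding of $n_1^*$ to an integer) contribute only $O(1)$. There is no genuine obstacle beyond this algebra, because the $\Theta(n)$ margins on either side dominate all $O(1)$ error terms once $n$ is large enough.
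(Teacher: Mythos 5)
Your proposal is correct and follows essentially the same route as the paper: both start from the exact formula of Theorem~\ref{N1}, substitute the (near-)optimal $n_1=\frac{n-t+1}{2}+\frac{k-1}{4}$ while absorbing the floor and rounding into $O(1)$ terms, and conclude by comparing the linear coefficient $\frac{2t+k-3}{4}$ against the targets, with margins $\frac{n}{4}$ and $\frac{n}{8}$ respectively. The only cosmetic difference is that the paper bounds the lower side by using $m_1>\frac{n-t+1}{2}$ directly rather than expanding at the exact optimum, which is an equivalent bookkeeping choice.
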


\begin{proof} Let
\[
m_1=\begin{cases}
\left\lfloor\frac{n-t+1}{2}+\frac{k-1}{4}\right\rfloor & \mbox{if~}2n-2t+k\equiv 0\!\!\pmod{4},\\
\left\lceil\frac{n-t+1}{2}+\frac{k-1}{4}\right\rceil & \mbox{otherwise}.
\end{cases}
\]
 By Theorem \ref{N1},
\begin{align*}
&\quad\mathrm{ex}(n, t(P_1\vee P_{2k}))\\
&= \frac{t^2-3t+2}{2}+(m_1+t-1)(n-t+1)+\left\lfloor\frac{(k-1)m_1}{2}\right\rfloor-m_1^2\\
&> \frac{t^2-3t+2}{2}+\frac{(n+t-1)(n-t+1)}{2}+\frac{(k-1)(n-t+1)}{4}-\frac{1}{2}-\left(\frac{n-t+1}{2}\right)^2\\
&=\frac{1}{4}n^2+\frac{1}{4}(k+2t-3)n-\frac{1}{4}(k+t-1)t+\frac{k}{4}-\frac{1}{2}\\
&>\frac{n}{4}\left(n+k+2t-\frac{7}{2}\right),
\end{align*}
where the first inequality holds as $m_1>\frac{n-t+1}{2}$
and the last inequality holds as $n$ is sufficiently large.

Viewed as a function of $m_1$, $\mathrm{ex}(n, t(P_1\vee P_{2k}))$ maximizes at $m_1=\frac{n-t+1}{2}+\frac{k-1}{4}$, so
\begin{align*}
\mathrm{ex}(n, t(P_1\vee P_{2k}))&\le \frac{t^2-3t+2}{2}+\frac{1}{4}\left(2n+2t+k-3\right)\left(n-t+1\right)\\
&\quad +\frac{k-1}{2}\cdot\frac{2n-2t+k+1}{4}-\frac{1}{16}\left(2n-2t+k+1\right)^2\\
&= \frac{n^2}{4}+\frac{2t+k-3}{4}n-\frac{(t+k-1)t}{4}+\frac{(k+1)^2}{16}\\
&<\frac{n^2}{4}+\frac{2t+k-2}{4}n,
\end{align*}
where the last inequality follows as $n$ is sufficiently large.
\end{proof}

\begin{Lemma}
Theorem~\ref{N2} is true for $t=1$.
\end{Lemma}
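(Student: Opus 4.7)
The plan is to start from Theorem~\ref{T3}, which already tells us that any $G\in\mathrm{Spex}(n,P_1\vee P_{2k})$ lies in $\mathcal{K}_{n_1,n-n_1}^{k-1}(P_{2k})$ for some $n_1$ with $|n_1-n/2|\le 1$. Thus only a bounded (at most three) number of candidate values of $n_1$ need to be compared, and the proof reduces to computing the spectral radius of a representative graph in each such class and identifying the largest in each of the parity cases listed in the statement for $t=1$.

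The computation rests on two ingredients. First, when $(k-1)n_1$ is even, every $H\in\mathcal{K}_{n_1,n_2}^{k-1}(P_{2k})$ has an exactly $(k-1)$-regular interior on $V_1$, so the bipartition $\{V_1,V_2\}$ is equitable with quotient matrix $\begin{pmatrix}k-1 & n_2\\ n_1 & 0\end{pmatrix}$, and Lemma~\ref{QM} gives
\[
\rho(H)=\Phi(n_1n_2),\qquad \Phi(p):=\tfrac{1}{2}\!\left((k-1)+\sqrt{(k-1)^2+4p}\right),
\]
which depends only on $n_1n_2$ and is strictly increasing in it. Second, when $(k-1)n_1$ is odd the interior is nearly $(k-1)$-regular with exactly one vertex $u_0$ of degree $k-2$. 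Since every vertex of $V_2$ has neighborhood $V_1$, the Perron eigenvector $\mathbf{x}$ is constant on $V_2$ with some value $c>0$; summing the eigenvalue equation $\lambda x_v=\sum_{u\in N_{V_1}(v)}x_u+n_2c$ over $v\in V_1$ and using the identity $\sum_{v\in V_1}x_v=\lambda c$ coming from the eigenvalue equation on $V_2$, one gets
\[
\rho(H)^2-(k-1)\rho(H)-n_1n_2=-\frac{x_{u_0}}{c},
\]
that is, $\rho(H)=\Phi(n_1n_2-x_{u_0}/c)$. Combining the trivial lower bound $x_v\ge n_2c/\rho(H)$ for every $v\in V_1$ (drop the interior sum) with the asymptotic $\rho(H)=\sqrt{n_1n_2}+(k-1)/2+O(n^{-1/2})$ coming from the upper bound $\rho(H)\le\Phi(n_1n_2)$ (Lemma~\ref{jn}), a short expansion produces
\[
\frac{x_{u_0}}{c}\ge 1+\frac{k-3}{n}+O(n^{-2})
\]
uniformly over all nearly regular $H$; in particular $x_{u_0}/c>1$ for every $k\ge 4$ and sufficiently large $n$, while trivially $x_{u_0}/c>0$ always.

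With these two formulas the case analysis becomes elementary. If $k$ is odd, then $(k-1)n_1$ is always even, every candidate gives a regular interior, and one just maximizes $n_1n_2$: this picks out $n_1=n/2$ for $n$ even and produces the tie $n_1=(n\pm1)/2$ for $n$ odd. If $k$ is even and $n\equiv 0\pmod 4$, the candidate $n_1=n/2$ is even and has strictly the largest product $n^2/4$, so it wins outright. If $k$ is even and $n\equiv 1,3\pmod 4$, the two candidates $(n\mp1)/2$ have equal products; exactly one of them is even (regular), the other odd (nearly regular with strictly smaller effective product since $x_{u_0}/c>0$), so the regular one wins. Finally, if $k$ is even and $n\equiv 2\pmod 4$, the candidates $n/2\pm 1$ are even and regular with product $n^2/4-1$, while $n_1=n/2$ is odd and yields a nearly regular interior with effective product $n^2/4-x_{u_0}/c<n^2/4-1$ (by the strict inequality $x_{u_0}/c>1$, valid here because $k\ge 4$), so the two outer candidates tie and win.

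The main obstacle is securing $x_{u_0}/c>1$ (not just $>0$) uniformly in the last case, because there the spectral comparison is at the resolution of a single unit in the product $n_1n_2$, which is the exact scale at which both the gain from the larger nominal product and the loss from the nearly regular interior sit. The $O(n^{-1/2})$ error implicit in the asymptotic of $\rho(H)$ must be handled carefully so that the leading $(k-3)/n$ term in the bound on $x_{u_0}/c$ is not swamped; once that is in hand every other step in the proof reduces to an explicit numerical comparison of two values of $\Phi$.
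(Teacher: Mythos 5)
Your argument is correct, and while it follows the paper's skeleton for the easy cases, it settles the one genuinely delicate case by a different device. Like the paper, you start from Theorem~\ref{T3}, reduce to the bounded list of candidates $n_1=\frac n2+\tau$, use the equitable-partition quotient matrix (Lemma~\ref{QM}) to get $\rho=\Phi(n_1n_2)$ exactly when $(k-1)n_1$ is even, and use the strict loss for a nearly regular interior to break ties when the products $n_1n_2$ coincide; this is precisely the paper's treatment of $k$ odd and of $k$ even with $n\equiv 0,1,3\pmod 4$. The divergence is the case $k$ even, $n\equiv 2\pmod 4$, where the nominally larger product $n^2/4$ sits on a nearly regular interior and must lose to the regular interiors with product $n^2/4-1$. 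The paper proves this by eigenvector-entry estimates (via the average-degree lower bound on $\rho(G)$), an explicit surgery producing $G'\in\mathcal{K}_{\frac n2+1,\frac n2-1}^{k-1}(P_{2k})$ from $G$ by deleting a matching and rewiring one vertex of $V_2$, and a positivity check of $\mathbf{y}^\top(A(G')-A(G))\mathbf{x}$. You instead derive the exact identity $\rho(H)^2-(k-1)\rho(H)-n_1n_2=-x_{u_0}/c$ (which is correct: summing the eigenvalue equations over $V_1$ and using $\sum_{v\in V_1}x_v=\rho(H)c$ gives exactly this), so the whole comparison collapses to the single scalar inequality $x_{u_0}/c>1$, which your bootstrap $x_{u_0}/c\ge\frac{n_2(\rho+k-2)}{\rho^2}\ge 1+\frac{k-3}{n}+O(n^{-2})$ does deliver for $k\ge 4$ and large $n$; it also makes transparent why the threshold is exactly $k>3$, i.e.\ why this case only arises for even $k$. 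Your route is shorter and avoids constructing a comparison graph and a second eigenvector, while the paper's surgery argument has the side benefit of exhibiting a concrete member of $\mathcal{K}_{\frac n2+1,\frac n2-1}^{k-1}(P_{2k})$; in your version you should note (one line) that this class is nonempty, e.g.\ by taking disjoint $(k-1)$-regular components of even order between $k$ and $2k-2$, since otherwise ``the outer candidates win'' is vacuous. Two cosmetic tightenings: only the upper bound $\rho(H)\le\Phi(n_1n_2)$ is needed, so the asymptotic should be stated as an inequality (and its error is in fact $O(n^{-1})$, not $O(n^{-1/2})$), and the strictness $x_{u_0}/c>1$ for large $n$ should be recorded as absorbing the $O(n^{-2})$ term into $\frac{k-3}{n}\ge\frac1n$.
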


\begin{proof} Theorem \cite{YLY} states that
$G\in \mathcal{K}_{\frac{n}{2}+\tau, \frac{n}{2}-\tau}^{k-1}(P_{2k})$ with $|\tau|\le 1$.
It remains only to determine the value of $\tau\in \{0, \pm \frac{1}{2}, \pm 1\}$.
Note that $G$ can be written as $G=G_1\vee(\frac{n}{2}-\tau)K_1$, where $|V(G_1)|=\frac{n}{2}+\tau$, and $G_1$ is $(k-1)$-regular if $(k-1)|V(G_1)|$ is even and nearly $(k-1)$-regular otherwise.
Let
\[
A_\tau=\begin{pmatrix}
k-1 & \frac{n}{2}-\tau\\ \frac{n}{2}+\tau & 0\end{pmatrix}.
\]
By a direct calculation, the largest eigenvalue of $A_\tau$ is
\[
\rho(A_\tau)=\frac{1}{2}\left(k-1+\sqrt{-4\tau^2+n^2+(k-1)^2}\right).
\]
Let $f(x)=-4x^2+n^2+(k-1)^2$, then it is an even function and maximizes at $x=0$.
So
\begin{align}\label{111}
\rho(A_0)>\rho(A_{\frac{1}{2}})=\rho(A_{-\frac{1}{2}})>\rho(A_1)=\rho(A_{-1}).
\end{align}
If $G_1$ is $(k-1)$-regular, then $A_\tau$ is the quotient matrix of $A(G)$ corresponding to the equitable partition $V(G)=V(G_1)\cup V((\frac{n}{2}-\tau)K_1)$,  so $\rho(G)=\rho(A_\tau)$ by Lemma \ref{QM}.

If $k$ is odd,
then  $G_1$ is $(k-1)$-regular and $\rho(G)=\rho(A_\tau)$, so by (\ref{111}), we have
\[
\tau=
\begin{cases}
0 & \mbox{if $n$ is even},\\
\pm \frac{1}{2} & \mbox{otherwise}.
\end{cases}
\]

Suppose next that $k$ is even.

Suppose that $n\equiv 0\!\!\pmod{4}$. Then $\tau=0$ or $\pm 1$.
If $\tau\ne 0$, then $G_1$ is nearly $(k-1)$-regular, so we have by Lemmas \ref{jn} and \ref{none}, together with (\ref{111}),  that $\rho(G)<\rho(A_\tau)<\rho(A_0)$, while if $\tau=0$, then $G_1$ is $(k-1)$-regular and $\rho(G)=\rho(A_0)$. This shows that
$\tau=0$.

Suppose that $n\equiv 1\!\!\pmod{4}$.  Then $\tau=\pm\frac{1}{2}$. If $\tau=\frac{1}{2}$, then $G_1$ is nearly $(k-1)$-regular, so by Lemmas \ref{jn} and \ref{none}, together with (\ref{111}), we have $\rho(G)<\rho(A_{\frac{1}{2}})=\rho(A_{-\frac{1}{2}})$, while
if $\tau=-\frac{1}{2}$, then $G_1$ is $(k-1)$-regular and $\rho(G)=\rho(A_{-\frac{1}{2}})$.
This shows that $\tau=-\frac{1}{2}$.

Suppose that $n\equiv 2\!\!\pmod{4}$. Then $\tau=0$ or $\pm1$. If $\tau=\pm1$, then $G_1$ is $(k-1)$-regular and $\rho(G)=\rho(A_1)=\rho(A_{-1})$.
Suppose  that $\tau=0$. Then $G_1$ is nearly $(k-1)$-regular,  so $\sum_{v\in V(G_1)}d_G(v)=\frac{n}{2}\left(\frac{n}{2}+k-1\right)-1=\frac{n^2}{4}+\frac{(k-1)n}{2}-1$.
Let $G_2=G-V(G_1)$.
Then for any $v\in V(G_2)$, $d_G(v)=\frac{n}{2}$, so $\sum_{v\in V(G_2)}d_G(v)=\frac{n^2}{4}$.
It follows that
\[
\rho(G)\ge\frac{1}{n}\sum\limits_{v\in V(G)}d_G(v)=\frac{1}{n}\left(\frac{n^2}{4}+\frac{(k-1)n}{2}-1+\frac{n^2}{4}\right)=\frac{n+k-1}{2}-\frac{1}{n}.
\]
Let $\mathbf{x}$ be a positive eigenvector associated with $\rho(G)$, which can be chosen so that $\max\{x_u: u\in V(G)\}=1$.
Assume that $x_{u_1}=1$.
Note that for any $u,v\in V(G_2)$, $x_u=x_v$.
Assume that $x_1:=\max\{x_v: v\in V(G_1)\}$.
As for $u\in V(G_2)$, $\rho(G)x_u=\sum_{v\in V(G_1)}x_v\le\frac{n}{2}x_1$, we have $x_u\le\frac{n}{2\rho(G)}x_1<x_1$.
It follows that $u_1\in V(G_1)$.
Fix a vertex $u_2\in V(G_2)$.
Since $\rho(G)=\rho(G)x_{u_1}\le\sum\limits_{v\in V(G_2)}x_v+\sum\limits_{v\in N_{G_1}(u_1)}x_v\le\frac{n}{2}x_{u_2}+k-1$, we have
\[
x_{u_2}\ge\frac{2(\rho(G)-k+1)}{n}\ge\frac{n-k+1}{n}-\frac{2}{n^2}=1-\frac{k-1}{n}-\frac{2}{n^2}.
\]
Assume that $d_{G_1}(v_0)=k-2$ for $v_0\in V(G_1)$.
Note that the matching number of $G_1$ are at least $\min\left\{\delta(G_1), \frac{|V(G_1)|}{2}\right\}=k-2$.
Let $\{v_1v_1',\dots,v_{\frac{k-2}{2}}v_{\frac{k-2}{2}}'\}$ be a matching of $G_1$ such that $d_{G_1}(v_i)=d_{G_1}(v_i')=k-1$ for $i=1,\cdots,\frac{k-2}{2}$, and
let $V_1'=\left\{v_i, v_i': 1\le i\le \frac{k-2}{2}\right\}$.
Let
\begin{align*}
G'&=G-\left\{v_iv_i': 1\le i\le \frac{k-2}{2}\right\}-\{u_2w: w\in V(G_1)\setminus (V_1'\cup\{v_0\})\}\\
&\quad+\{u_2w: w\in V(G_2)\setminus\{u_2\}\}.
\end{align*}
Then $G'\in\mathcal{K}_{\frac{n}{2}+1, \frac{n}{2}-1}^{k-1}(P_{2k})$, so $G'$ is $(P_1\vee P_{2k})$-free, and then $A_1$ is the quotient matrix of $G'$ corresponding to the partition $V(G')=V((V(G_1)\cup\{u_2\}) \cup (V(G_2)\setminus\{u_2\}))$.
So by Lemma \ref{QM},
\[
\rho(G')=\rho(A_1)=\frac{1}{2}\left(k-1+\sqrt{n^2+(k-1)^2-4}\right).
\]
Let
\[
S=\begin{pmatrix}
1 & \cdots & 1 & 0 & \cdots & 0\\
0 & \cdots & 0 & 1 & \cdots & 1\end{pmatrix}^\top
\]
be a $n\times 2$ matrix with the first $\frac{n}{2}+1$ elements in the first column are $1$.
Then we can check that $A(G')S=SA_1$.
Let $(y_1, y_2)^\top$ be a positive eigenvector associated with $\rho(A_1)$.
Then by Lemma \ref{QM},
\[
\mathbf{y}:=S(y_1, y_2)^\top=(\underbrace{y_1, \cdots, y_1}_{\frac{n}{2}+1}, \underbrace{y_2, \cdots, y_2}_{\frac{n}{2}-1})^\top
\]
is an eigenvector associated with $\rho(G')$.
Note that
\[
\rho(A_1)y_1=(k-1)y_1+\left(\frac{n}{2}-1\right)y_2,
\]
and
\begin{align*}
&\quad \frac{1}{2}\left(k+1+\sqrt{n^2+(k-1)^2-4}\right)\left(1-\frac{k-1}{n}-\frac{2}{n^2}\right)
-\frac{n}{2}+1\\
&=\frac{1}{2}\left(k+3+\sqrt{n^2+(k-1)^2-4}\right)-\frac{n}{2}-\frac{k-1}{2}
{\left(\frac{k+1}{n}+\sqrt{1+\frac{(k-1)^2-4}{n^2}}\right)}\\
&\quad -\frac{\sqrt{n^2+(k-1)^2-4}}{n^2}-\frac{k+1}{n^2} \\
%&> \frac{1}{2}\left(k+3+\sqrt{\left(n+\frac{(k+1)(k-3)-1}{2n}\right)^2}\right)-\frac{n}{2}
%-\frac{(k-1)\left(\sqrt{\left(n+\frac{(k+1)(k-3)}{2n}\right)^2}\right)}{2n}-o(n)\\
&= \frac{k+3}{2}-\frac{k-1}{2}-o(n)\\
&> 0
\end{align*}
as $n$ is sufficiently large.
Then
\begin{align*}
&\quad \mathbf{y}^\top(\rho(G')-\rho(G))\mathbf{x}=\mathbf{y}^\top(A(G')-A(G))\mathbf{x}\\
&=\sum\limits_{w\in V(G_2)\setminus\{u_2\}}(x_{u_2}y_2+y_1x_w)-\sum\limits_{w\in V(G_1)\setminus(V_1'\cup\{v_0\})}(x_{u_2}y_1+y_1x_w)
-\sum\limits_{i=1}^{\frac{k-2}{2}}(x_{v_i}y_1+y_1x_{v_i'})\\
&\ge \left(\frac{n}{2}-1\right)(x_{u_2}y_2+x_{u_2}y_1)-\left(\frac{n}{2}-k+1\right)(x_{u_2}y_1+y_1)-(k-2)y_1\\
&= \left((k-2)y_1+\Big(\frac{n}{2}-1\Big)y_2\right)x_{u_2}-\left(\frac{n}{2}-1\right)y_1\\
&= \left(\rho(A_1)+1\right)y_1 x_{u_2}-\left(\frac{n}{2}-1\right)y_1\\
&= \left(\frac{1}{2}\left(k+1+\sqrt{n^2+(k-1)^2-4}\right)x_{u_2}-\frac{n}{2}+1\right)y_1\\
&\ge \left(\frac{1}{2}\left(k+1+\sqrt{n^2+(k-1)^2-4}\right)\left(1-\frac{k-1}{n}-\frac{2}{n^2}\right)
-\frac{n}{2}+1\right)y_1 \\
&> 0.
\end{align*}
So $\rho(A_1)=\rho(G')>\rho(G)$. It follows that $\tau=\pm 1$.

%Suppose that $n\equiv 2\!\!\pmod{4}$. Then $\tau=0$ or $\pm1$.
%%Denoted by $\alpha:=\frac{1}{2}\left(k-1+\sqrt{n^2+(k-1)^2-4}\right)$.
%Let
%\[
%B=\begin{pmatrix}
%k-1-\frac{2}{n} & \frac{n}{2}\\ \frac{n}{2} & 0\end{pmatrix}.
%\]
%It can be checked the that largest root of $B$ is
%\[
%\beta:=\frac{1}{2n}\left((k-1)n-2+\sqrt{n^4+n^2(k-1)^2-4n(k-1)+4}\right).
%\]
%For sufficiently large $n$,
%\[
%n^4+n^2(k-1)^2-4n(k-1)+4-n^2(n^2+(k-1)^2-4)=4n(n-k+1)+4>0,
%\]
%so
%\[
%\beta-\alpha_1=\frac{1}{2n}\left(-2+\sqrt{n^4+n^2(k-1)^2-4n(k-1)+4}-n\sqrt{n^2+(k-1)^2-4}\right)>0.
%\]
%As $\alpha_1=\alpha_{-1}$ by (\ref{111}), we have $\beta-\alpha_{-1}>0$.
%If $\tau=\pm1$, then $G_1$ is $(k-1)$-regular and $\rho(G)=\rho(A)=\alpha_1=\alpha_{-1}<\rho(B)$,
%while if $\tau=0$, then $B$ is the quotient matrix of $A(G)$ corresponding to the partition $V(G)=V(G_1)\cup V(\frac{n}{2}K_1)$, and by Lemma \ref{QM}, we have $\rho(G)\ge\rho(B)=\beta>\alpha_1=\alpha_{-1}$.
%It follows that $\tau=0$.

 Suppose that $n\equiv 3 \!\!\pmod{4}$. Then $\tau=\pm\frac{1}{2}$.  If $\tau=-\frac{1}{2}$, then $G_1$ is nearly $(k-1)$-regular, so by Lemmas \ref{jn} and \ref{none}, and (\ref{111}), we have $\rho(G)<\rho(A_{-\frac{1}{2}})=\rho(A_{\frac{1}{2}})$, while if $\tau=\frac{1}{2}$, then $G_1$ is $(k-1)$-regular and $\rho(G)=\rho(A_{\frac{1}{2}})$. So $\tau=\frac{1}{2}$.
\end{proof}

\begin{Lemma}
Theorem~\ref{N2} is true for $t\ge 2$.
\end{Lemma}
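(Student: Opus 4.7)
The strategy is to reduce to the $t=1$ case by showing that $G$ contains $t-1$ universal vertices (of degree $n-1$), then to pin down the optimal partition via a quotient-matrix analysis. Let $\mathbf{x}$ denote the Perron eigenvector of $G$, normalized so that $\max_v x_v = 1$, and set $\rho = \rho(G)$. First I would fix a candidate $G^* \in \mathcal{H}_{n,t}$ from Theorem \ref{N1} and, via the equitable partition $V(K_{t-1}) \cup V_1 \cup V_2$ (where $(V_1, V_2)$ is the bipartition of the underlying $K_{n_1, n_2}$) together with Lemma \ref{QM}, extract a $3 \times 3$ quotient matrix to get the lower bound $\rho \ge \rho(G^*) = \frac{n}{2} + O(1)$. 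Since $\chi(t(P_1 \vee P_{2k})) = 3$, Lemma \ref{co} then forces $G$ to be within $o(n^2)$ edges of $T_{n,2}$.

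Combining this stability with the eigenvector equation $\rho x_v = \sum_{u \sim v} x_u$ yields a bipartition $V(G) = V_1' \cup V_2'$ with $|V_i'| = \frac{n}{2} + O(\sqrt{n})$, $d_{V_i'}(v) = O(1)$ for all but a few vertices $v \in V_i'$, and large common-neighborhood estimates across $V_1'$ and $V_2'$, paralleling the structure uncovered during the proof of Theorem \ref{N1}. Lemma \ref{L1} then gives $e(G) \le f(n,t)$, with equality only if $G$ has a vertex of degree $n-1$; the extremality of $\rho$ should pin down this equality.

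The main obstacle is the spectral switching step that actually extracts the $K_{t-1}$. For any hypothetical $v_0 \in V(G)$ with $d(v_0) \le n-2$, I would choose a vertex $w$ of maximum eigenvector entry and form $G' = G - \{v_0 z : z \in N_G(v_0) \setminus N_G(w)\} + \{v_0 z : z \in N_G(w) \setminus (N_G(v_0) \cup \{v_0\})\}$. A direct computation gives $\mathbf{x}^\top (A(G') - A(G)) \mathbf{x} \ge 0$, with strict inequality unless the neighborhoods of $v_0$ and $w$ essentially coincide. The delicate point is verifying that $G'$ remains $t(P_1 \vee P_{2k})$-free: any new copy in $G'$ would use $v_0$ as the center of some $P_1 \vee P_{2k}$, and pulling back to $w$ in $G$ produces a corresponding copy, contradicting $F$-freeness. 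Iterating this argument across the small set of top-eigenvector-entry vertices identifies $t-1$ universal vertices, whence $G = K_{t-1} \vee H$ with $H$ necessarily $(P_1 \vee P_{2k})$-free.

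For the final exact value of $n_1$, I would mirror the quotient-matrix computation from the $t=1$ proof, now applied to the $3 \times 3$ matrix whose rows correspond to $V(K_{t-1})$, $V_1$, and $V_2$ of $H$. The spectral comparison using Lemmas \ref{jn}, \ref{QM}, and \ref{none} again hinges on whether $H[V_1]$ is exactly $(k-1)$-regular (which forces a parity condition on $(k-1) n_1$), and the presence of $K_{t-1}$ breaks the $\tau \leftrightarrow -\tau$ symmetry responsible for the $\pm$ ambiguities in the $t=1$ case, leaving a single optimal $n_1$ in each residue class of $n-t$ modulo $4$ and matching the stated expressions.
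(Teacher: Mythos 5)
Your outline follows the paper only in its opening moves (the lower bound $\rho(G)\ge\rho(G^*)\approx n/2$, Lemma~\ref{co}, and a near-balanced max-cut partition with $O(1)$ inner degrees for most vertices), but the two steps you lean on afterwards both have genuine gaps. First, the claim that ``Lemma~\ref{L1} gives $e(G)\le f(n,t)$ with equality only if $G$ has a vertex of degree $n-1$; the extremality of $\rho$ should pin down this equality'' does not work: a spectral-extremal graph need not be edge-extremal, and in fact here it is not --- the optimal $n_1$ in Theorem~\ref{N2} differs from that in Theorem~\ref{N1} (see the remark after Theorem~\ref{N2}), so $e(G)=f(n,t)$ is generally false for $G\in\mathrm{Spex}(n,F)$ and cannot be used to produce a universal vertex. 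Second, your neighborhood-copying switch $G'=G-\{v_0z:z\in N_G(v_0)\setminus N_G(w)\}+\{v_0z:z\in N_G(w)\setminus(N_G(v_0)\cup\{v_0\})\}$ is not shown to be $F$-free: a copy of $t(P_1\vee P_{2k})$ in $G'$ must use $v_0$, but it may simultaneously use $w$ (in the same fan or in another of the $t$ disjoint fans), and $v_0$ need not be a center, so the ``pull back $v_0$ to $w$'' substitution fails exactly in the problematic cases. The paper avoids this with a different switch (Claim~\ref{C277}: reattach $v$ to all of one side $V_j'$), whose $F$-freeness is proved by re-embedding any putative fan through $v$ using large common-neighborhood estimates, and it obtains universality of the high-inner-degree vertices $W$ by an edge-addition argument combined with Claim~\ref{C26} (Claim~\ref{C29}), not by Zykov-type symmetrization.

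Even granting $t-1$ universal vertices, the remainder of your plan is missing the bulk of the work. You give no argument that the number of universal vertices is exactly $t-1$ (the paper needs a separate rewiring/Rayleigh-quotient comparison to prove $|W|\ge t-1$ in Claim~\ref{C29}), and ``$H$ is $(P_1\vee P_{2k})$-free'' is far weaker than the required structure $H\in\mathcal{K}^{k-1}_{n_1,n-n_1-t+1}(P_{2k})$; you also cannot simply quote the $t=1$ result, because maximizing $\rho(K_{t-1}\vee H)$ over $(P_1\vee P_{2k})$-free $H$ is not the same as maximizing $\rho(H)$. The paper establishes the structure of $H$ directly through Claims~\ref{C10}, \ref{DD}, \ref{C11} and \ref{C12} (bounding inner degrees by $k-1$, forcing $\Delta(G[V_1'])=k-1$ and $E(G[V_2'])=\emptyset$ via matching bounds from Lemma~\ref{MD} and characteristic-polynomial comparisons), then proves exact $(k-1)$-regularity (Claim~\ref{C13}) before the parity analysis. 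Finally, the determination of $n_1$ for $t\ge2$ is not a symmetry-breaking afterthought: in the cases $n-t\equiv 0,1\pmod 4$ with $k$ even one must compare $\rho(A_r)$ against $\rho(A_{r'})-c'$ with an explicit $c'\approx\frac{1}{4n}$ (Claim~\ref{Fca}, Cases 2.2--2.3), a quantitative step your proposal does not supply.
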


\begin{proof}
Let $F=t(P_1\vee P_{2k})$.
Suppose that $G$ is a $F$-free with order $n$ that maximizes the spectral radius, where $n$ is sufficiently large.
Suppose that $G$ is disconnected. Then, for some component $G_1$ of $G$, $\rho(G)=\rho (G_1)$. Let $G_2$ be a different component. The graph $G'$ obtained from $G$ by adding an edge between one vertex of $G_1$ and one vertex of $G_2$ is also $F$-free with order $n$. However, we have by Lemma \ref{L1} that $\rho(G')>\rho(G_1\cup G_2)=\rho(G_1)=\rho(G)$, a contradiction. It follows that $G$ is connected.
By Perron-Frobenius Theorem, $A(G)$ has a positive eigenvector $\mathbf{x}=(x_1, \dots, x_n)^\top$ associated with $\rho(G)$, which can be chosen so that $\max\{x_w: w\in V(G)\}=1$.
Assume that $x_{u}=1$.
Let $\eta=\frac{1}{9(3k+1)}$ be a constant.
%with $n>\frac{4t(2k+1)}{\eta}$ and $n>\frac{4k}{1-8\eta (3k+1)}$.
%with $\frac{28}{n}<\frac{4t(2k+1)}{n}\le\eta\le\frac{n-4k}{8(3k+1)n}<\frac{1}{80}$.
It suffices to show that $G\cong K_{t-1}\vee H$ with  $H\in \mathcal{K}_{n_1,n-n_1-t+1}^{k-1}(P_{2k})$.

\begin{Claim}\label{C21}
$\rho(G)>\frac{n+k+2t}{2}-\frac{7}{4}$.
\end{Claim}

\begin{proof} Let $G_0\in Ex(n, F)$.  By Lemma \ref{FINa},
\[
\rho(G)\ge \rho(G_0)\ge \frac{2\mathrm{ex}(n, F)}{n}>\frac{n+k+2t}{2}-\frac{7}{4}. \qedhere
\]
\end{proof}

\begin{Claim}\label{C22}
$e(G)\ge\frac{n^2}{4}-\frac{1}{4}\eta^2n^2$.  %, where $\eta$ is a constant with $\eta<\frac{1}{80}$.
Furthermore, if $G$ has a partition $V(G)=V_1\cup V_2$ such that $e(V_1, V_2)$ is maximum, then $e(V_1)+e(V_2)<\frac{1}{4}\eta^2n^2$, and for each $i\in\{1,2\}$, $\frac{n}{2}-\eta n<|V_i|<\frac{n}{2}+\eta n$.
\end{Claim}

\begin{proof}
Let $\epsilon$ be a positive constant with $\epsilon<\frac{1}{4}\eta^2$. Note that $\chi(F)=3=2+1$ and $G$ is $F$-free.  By Claim \ref{C21} $\rho(G)>\frac{n+k+2t}{2}-\frac{7}{4}\ge (1-\frac{1}{2}-\delta)n$ for any $\delta>0$.
As $n$ is sufficiently large, we have by Lemma \ref{co} that
\[
e(G)\ge e(T_{n, 2})-\epsilon n^2= \left \lfloor \frac{n^2}{4}\right\rfloor-\epsilon n^2\ge\frac{n^2}{4}-\frac{1}{4}\eta^2n^2
\]
and there exists a partition $V(G)=U_1\cup U_2$ such that $\lfloor\frac{n}{2}\rfloor\le|U_1|\le|U_2|\le\lceil\frac{n}{2}\rceil$ and $e(U_1)+e(U_2)\le \epsilon n^2 <\frac{1}{4}\eta^2n^2$.
Let $V(G)=V_1\cup V_2$ such that $e(V_1, V_2)$ is  maximum.
Then
\[
e(V_1)+e(V_2)\le e(U_1)+e(U_2)<\frac{1}{4}\eta^2n^2.
\]
Assuming that $|V_1|=\frac{n}{2}+a$ and $|V_2|=\frac{n}{2}-a$, we have
\[
\frac{n^2}{4}-\frac{1}{4}\eta^2n^2\le e(G)\le|V_1||V_2|+e(V_1)+e(V_2)< \left(\frac{n}{2}+a\right)\left(\frac{n}{2}-a\right)+\frac{1}{4}\eta^2n^2,
\]
so $a^2<\frac{1}{2}\eta^2n^2$, then we have $-\eta n<a<\eta n$.
Thus, $\frac{n}{2}-\eta n<|V_i|<\frac{n}{2}+\eta n$ for $i=1, 2$.
\end{proof}

Let $U=\{v\in V(G): d_G(v)\le(\frac{1}{2}-2\eta)n\}$.

\begin{Claim}\label{C23}
 $|U|\le\frac{1}{2}\eta n$.
\end{Claim}

\begin{proof}
Suppose that $|U|>\frac{1}{2}\eta n$.
Choose $U'\subset U$ such that $|U'|=\lfloor\frac{1}{2}\eta n\rfloor$.
By Lemma \ref{FINa},
\[
\mathrm{ex}(n, F)<\frac{n^2}{4}+\frac{2t+k-2}{4}n.
\]
By Claim \ref{C22}, and Lemma \ref{FINa},
\begin{align*}
e(G\setminus U')&\ge  e(G)-\sum_{v\in U'}d_G(v)\\
&\ge \frac{n^2}{4}-\frac{1}{4}\eta^2n^2-\left(\frac{1}{2}-2\eta\right)n\cdot\left\lfloor\frac{\eta n}{2}\right\rfloor\\
&\ge \frac{1}{4}(1-\eta+3\eta^2)n^2\\
&= \frac{1}{4}\left(n-\frac{1}{2}\eta n+\frac{1}{2}\right)^2
-\frac{1}{4}\left(n-\frac{1}{2}\eta n\right)+\frac{1}{16}(11\eta^2n^2-1)\\
&\ge\frac{(n-\lfloor\frac{1}{2}\eta n\rfloor)^2}{4}
-\frac{1}{4}\left(n-\left\lfloor\frac{1}{2}\eta n\right\rfloor\right)+\frac{1}{16}(11\eta^2n^2-1)\\
&>  \frac{(n-\lfloor\frac{1}{2}\eta n\rfloor)^2}{4}+\frac{2t+k-2}{4}\left(n-\left\lfloor\frac{1}{2}\eta n\right\rfloor\right)\\
&>ex\left(n-\left\lfloor\frac{1}{2}\eta n\right\rfloor, F\right),
\end{align*}
where the penultimate inequality follows as $n$ is sufficiently large.
Thus  $G\setminus U'$ contains $F$, a contradiction.
\end{proof}

Let $W=W_1\cup W_2$ with  $W_i=\{v\in V_i: d_{V_i}(v)\ge 2\eta n\}$ for $i=1, 2$.
\begin{Claim}\label{C24}
$|W|<\frac{1}{4}\eta n$.
\end{Claim}

\begin{proof}
For each $i=1,2$, we have
\[
2e(V_i)=\sum_{v\in V_i}d_{V_i}(v)\ge\sum_{v\in W_i}d_{V_i}(v)\ge |W_i|\cdot 2\eta n,
\]
so
\[
e(V_1)+e(V_2)\ge|W|\eta n.
\]
By Claim \ref{C22}, we have $|W|\eta n<\frac{1}{4}\eta^2n^2$, i.e.,  $|W|<\frac{1}{4}\eta n$.
\end{proof}

\begin{Claim}\label{C25}
For any $v\in V_i\setminus(U\cup W)$ with $i=1,2$,
\[
d_{V_i\setminus(U\cup W)}(v)\le (t-1)(2k+1)+k-1.
\]
\end{Claim}

\begin{proof} Suppose that there exists $v_0\in V_i\setminus(U\cup W)$ such that $d_{V_i\setminus(U\cup W)}(v_0)\ge (t-1)(2k+1)+k$.
Since $G\in \mathrm{Spex}(n, F)$, $G$ contains $H:=(t-1)(P_1\vee P_{2k})$.
Let $V_i'=V_i\setminus(U\cup W\cup V(H))$ for  $i=1,2$.
Hence, $d_{V_i'}(v_0)\ge d_{V_i\setminus(U\cup W)}(v_0)-|V(H)|\ge k$.
So $G[N_{V_i'}(v_0)\cup\{v_0\}]$ contains $K_{1,k}$.
Next, we show that $G-(U\cup W\cup V(H))$ contains $P_1\vee P_{2k}$.

For any $v\in V_i'$, we have $d_G(v)>(\frac{1}{2}-2\eta)n$ as $v\notin U$ and $d_{V_i}(v)< 2\eta n$ as $v\notin W$.
Then
\[
d_{V_{3-i}}(v)=d_G(v)-d_{V_i}(v)>\left(\frac{1}{2}-2\eta\right)n-2\eta n=\frac{n}{2}-4\eta n.
\]
As $n$ is sufficiently large, we may require that $|V(H)|=(t-1)(2k+1)<\frac{1}{4}\eta n$.
By Claims \ref{C23} and \ref{C24},
\[
d_{V_{3-i}'}(v)\ge d_{V_{3-i}}(v)-|U|-|W|-|V(H)|>\frac{n}{2}-4\eta n-\frac{1}{2}\eta n-\frac{1}{4}\eta n-\frac{1}{4}\eta n=\frac{n}{2}-5\eta n.
\]
%Now suppose that $G[V_1']$ contains $K_{1,k}$.
Assume that $V(K_{1,k})=\{v_0, \dots, v_k\}$. %where $v_0$ is the center of $K_{1,k}$.
Then by Lemma \ref{cap} and Claim \ref{C22}, we have
\begin{align*}
|N_{V_{3-i}'}(v_0)\cap\dots\cap N_{V_{3-i}'}(v_k)| &\ge |N_{V_{3-i}'}(v_0)|+\dots+|N_{V_{3-i}'}(v_k)|-k|V_{3-i}|\\
&>(k+1)\left(\frac{n}{2}-5\eta n\right)-k\left(\frac{n}{2}+\eta n\right)\\
&= \frac{n}{2}-(6k+5)\eta n\\
&=\left(\frac{1}{2}-\frac{6k+5}{9(3k+1)}\right)n\\
&> k
\end{align*}
as $n$ is sufficiently large.
Therefore, there has a copy of $H':=P_1\vee P_{2k}$ with center $v_0$ in $G-(U\cup W\cup V(H))$.
As $V(H)\cap V(H')=\emptyset$, $G$ contains $H\cup H'=F$, a contradiction.
\end{proof}

\begin{Claim}\label{C26}
Let $R\subset V(G)$ with $|R|\le (t-1)(2k+1)$.
If there exists $u_0\in W\setminus U$, then $G-((U\cup W\cup R)\setminus\{u_0\})$ contains $P_1\vee P_{2k}$.
\end{Claim}

\begin{proof}
Suppose that $u_0\in W\setminus U$ and $u_0\in V_1$. Then $d_G(u_0)>(\frac{1}{2}-2\eta)n$ as $u_0\notin U$ and $d_{V_1}(u_0)\ge 2\eta n$ as $u_0\in W$.
Let $V_i'=V_i\setminus(U\cup W\cup R)$ for $i=1, 2$.
By Claims \ref{C23} and \ref{C24},
\begin{align*}
d_{V_1'}(u_0)&\ge d_{V_1}(u_0)-|U|-|W|-|R|\\
&> 2\eta n-\frac{1}{2}\eta n-\frac{1}{4}\eta n-(t-1)(2k+1)\\
&> \eta n%= \frac{n}{9(3k+1)}\\
>k
\end{align*}
as $n$ is sufficiently large. So we can
choose $k$ vertices $\{u_1, \dots, u_k\}\subseteq N_{V_1'}(u_0)$.
Since $V(G)=V_1\cup V_2$ is a partition of $V(G)$ such that $e(V_1, V_2)$ is maximum,
$d_{V_2}(u_0)\ge\frac{1}{2}d_G(u_0)>(\frac{1}{4}-\eta)n$.
By Claims \ref{C23} and \ref{C24}, we have
\begin{align*}
d_{V_2'}(u_0)&\ge  d_{V_2}(u_0)-|U|-|W|-|R|\\
&> \left(\frac{1}{4}-\eta\right)n-\frac{1}{2}\eta n-\frac{1}{4}\eta n-(t-1)(2k+1)\\
&> \frac{n}{4}-2\eta n
\end{align*}
as $n$ is sufficiently large.
From the proof of Claim \ref{C25},
we have $d_{V_2'}(u_j)>\frac{n}{2}-5\eta n$ for each $j\in\{1,\dots,k\}$.
Recall that $\eta=\frac{1}{9(3k+1)}$.
By Lemma \ref{cap}, we have
\begin{align*}
|N_{V_2'}(u_0)\cap\dots\cap N_{V_2'}(u_k)|&\ge |N_{V_2'}(u_0)|+\dots+|N_{V_2'}(u_k)|-k|V_2|\\
&>  \frac{n}{4}-2\eta n+k\left(\frac{n}{2}-5\eta n\right)-k\left(\frac{n}{2}+\eta n\right)\\
&= \frac{n}{4}-(6k+2)\eta n\\
&=\left(\frac{1}{4}-\frac{6k+2}{9(3k+1)}\right)n\\
&=\frac{1}{36}n>k
\end{align*}
as $n$ is sufficiently large.
So there has a copy of $P_1\vee P_{2k}$ with center $u_0$ in $G-(U\cup W\cup R)\setminus\{u_0\}$.
\end{proof}

\begin{Claim}\label{C266}
$|W\setminus U|\le t-1$.
\end{Claim}

\begin{proof}
Suppose that $|W\setminus U|\ge t$.
Let $v_1\in W\setminus U$.  By Claim \ref{C26}, there is a subgraph $H_1:= P_1\vee P_{2k}$ in $G-((U\cup W)\setminus \{v_1\})$.
%Let $u_2\in W\setminus (U\cup\{u_1\})$ and $R_1=V(H_1)$. By Claim \ref{C26}, $G-((U\cup W\cup R_1)\setminus \{u_2\})$ contains  $H_2:=P_1\vee P_{2k}$. Evidently,  $V(H_1)\cap V(H_2)=\emptyset$.
We repeat this process to find $v_1,\dots, v_t$ such that for $i=2,\dots,t$,
$v_i\in W\setminus (U\cup\{v_1, \dots, v_{i-1}\})$, and we have by Claim \ref{C26} that
$G-((U\cup W\cup \cup_{j=2}^{i} V(H_{j-1}))\setminus \{v_i\})$ contains  $H_i:=P_1\vee P_{2k}$.
Thus  $G$ contains $F$, a contradiction. It follows that $|W\setminus U|\le t-1$.
\end{proof}

Let $x_{v^*}=\max\{x_v: v\in V(G)\setminus W\}$.

\begin{Claim}\label{C2666}
$x_{v^*}\ge\frac{\rho(G)-|W|}{n-|W|}>\frac{1-\eta}{2}+\frac{2k-3}{4n}$, and $v^*\notin U$.
\end{Claim}
\begin{proof}
Note that
\[
\rho(G) x_u\le\sum_{v\in W}x_v+\sum_{v\in V(G)\setminus W}x_v\le|W|x_u+(n-|W|)x_{v^*}.
 \]
By Claims \ref{C21}, \ref{C23} and \ref{C266},
\[
x_{v^*}\ge\frac{\rho(G)-|W|}{n-|W|}>\frac{\frac{n+k+2t}{2}-\frac{7}{4}-(t-1)-|U|}{n}\ge\frac{1-\eta}{2}+\frac{2k-3}{4n}.
\]

Since
\begin{align*}
\rho(G) x_{v^*}&=\sum_{v\in N_W(v^*)}x_v+\sum_{v\in N_{V(G)\setminus W}(v^*)}x_v\\
&\le d_W(v^*)+(d_G(v^*)-d_W(v^*))x_{v^*}\\
&\le |W|(1-x_{v^*})+d_G(v^*)x_{v^*},
\end{align*}
we have by Claims \ref{C21} and \ref{C24} that
\begin{align*}
d_G(v^*)&\ge\rho(G)-|W|\left(\frac{1}{x_{v^*}}-1\right)\\
&\ge \frac{n+k+2t}{2}-\frac{7}{4}-\frac{1}{4}\eta n\left(\frac{4n}{2(1-\eta)n+2k-3}-1\right)\\
&= \frac{n}{2}+\frac{2k+4t-7}{4}-\frac{1}{4}\eta n\left(\frac{4}{2(1-\eta)+\frac{2k-3}{n}}-1\right)\\
&> \frac{n}{2}-2\eta n,
\end{align*}
so $v^*\notin U$.
\end{proof}

For convenience, let $V_i'=V_i\setminus (U\cup W)$ for $i=1,2$.

\begin{Claim}\label{C277}
For $v\in V(G)$, let $G'$ be the graph obtained from $G$ by deleting all edges incident to $v$ and adding either all possible edges between $v$ and vertices in $V_1'$, or all possible edges between $v$ and vertices in $V_2'$, but not both.
Then  $G'$ is $F$-free.
\end{Claim}

\begin{proof} Fix $j\in \{1,2\}$ so that all possible edges between $v$ and vertices in $V_j'$ are added to form $G'$.
Suppose that $G'$ contains $F$.
Then $v\in V(F)$.
Take $H_1:=P_1\vee P_{2k}$ from $F$ with $v\in V(H_1)$.
Let $V(H_1)=\{v, v_1, \dots, v_{2k}\}$ and $H_2=F-V(H_1)$.
Then $H_2$ is a subgraph of $G$ as any added edge to form $G'$ from $G$ lies outside $H_2$.
For $i\in\{1,\dots,2k\}$, $d_G(v_i)>(\frac{1}{2}-2\eta)n$ as $v_i\notin U$, and by Claim \ref{C25}, $d_{V_j'}(v_i)\le(t-1)(2k+1)+k-1<t(2k+1)<\frac{1}{4}\eta n$ as $n$ is sufficiently large.
By Claims \ref{C23} and  \ref{C24}, $|U|+|W|< \frac{3}{4}\eta n$. So
for each $i\in\{1,\dots,2k\}$,
\[
|N_{V_{3-j}'}(v_i)|\ge d_G(v_i)-d_{V_j'}(v_i)-|U|-|W|>\frac{n}{2}-3\eta n.
\]

\noindent{\bf Case i.} $v$ is a center of $H_1$.

Since $G$ is $F$-free, $v$ is not adjacent to all vertices of $V(H_1)\setminus\{v\}$ in $G$.
By Lemma \ref{cap} and Claim \ref{C22},
\begin{align*}
|N_{V_{3-j}'}(v_1)\cap\dots\cap N_{V_{3-j}'}(v_{2k})|&\ge |N_{V_{3-j}'}(v_1)|+\dots+|N_{V_{3-j}'}(v_{2k})|-(2k-1)|V_{3-j}'|\\
&\ge 2k\left(\frac{n}{2}-3\eta n\right)-(2k-1)|V_{3-j}|\\
&> 2k\left(\frac{n}{2}-3\eta n\right)-(2k-1)\left(\frac{n}{2}+\eta n\right)\\
&= \left(\frac{1}{2}-\frac{8k-1}{9(3k+1)}\right)n\\
&>(t-1)(2k+1)
\end{align*}
as $n$ is sufficiently large,
so
\[
|N_{V_{3-j}'}(v_1)\cap\dots\cap N_{V_{3-j}'}(v_{2k})|-|V(H_2)|\ge 1,
\]
implying that for some $v_0\in V_{3-j}'$, $v_0\notin V(H_2)$ and $v_0\in N_{V_{3-j}'}(v_1)\cap\dots\cap N_{V_{3-j}'}(v_{2k})$. Then $G[\{v_0, v_1,\dots, v_{2k}\}]=P_1\vee P_{2k}$, so $G$ contains $F$, a contradiction.

\noindent{\bf Case ii.} $v$ is an end vertex of $P_{2k}$ in $H_1$.

Assume that $v_{2k}$ is the center of $H_1$ and $v$ is adjacent to $v_1$ in $H_1$.
Then by Lemma \ref{cap} and Claim \ref{C22},
\begin{align*}
|N_{V_{3-j}'}(v_1)\cap N_{V_{3-j}'}(v_{2k})|&\ge |N_{V_{3-j}'}(v_1)|+|N_{V_{3-j}'}(v_{2k})|-|V_{3-j}|\\
&>  2\left(\frac{n}{2}-3\eta n\right)-\left(\frac{n}{2}+\eta n\right)\\
&= \left(\frac{1}{2}-\frac{7}{9(3k+1)}\right)n\\
&>(t-1)(2k+1)
\end{align*}
as $n$ is sufficiently large.
So we can find $v_0\in V_{3-j}'$ such that $v_0\notin V(H_2)$ and $v_0\in N_{V_{3-j}'}(v_1)\cap N_{V_{3-j}'}(v_{2k})$.
Similarly as in Case i, $G$ contains $F$, a contradiction.

\noindent{\bf Case iii.} $v$ is an internal vertex of $P_{2k}$ in $H_1$.

Assume that $v_{\ell}$ is the center of $H_1$ and $v$ is adjacent to $v_1$ and $v_2$, where $3\le\ell\le2k$.
By Lemma \ref{cap} and Claim \ref{C22},
\begin{align*}
&\quad |N_{V_{3-j}'}(v_1)\cap N_{V_{3-j}'}(v_2)\cap N_{V_{3-j}'}(v_\ell)|\\
&\ge |N_{V_{3-j}'}(v_1)|+|N_{V_{3-j}'}(v_2)|+ |N_{V_{3-j}'}(v_\ell)|-2|V_{3-j}|\\
&=\left(\frac{1}{2}-\frac{11}{9(3k+1)}\right)n\\
&>(t-1)(2k+1)
\end{align*}
as $n$ is sufficiently large.
Similarly, $G$ contains $F$, a contradiction.

By combining Cases i--iii, $G'$ is $F$-free.
\end{proof}

\begin{Claim}\label{C27}
$U=\emptyset$.
\end{Claim}

\begin{proof}
Suppose that $U\ne\emptyset$.  Let $v\in U$.
Then $d_G(v)\le(\frac{1}{2}-2\eta)n$.
Assume that $v^*\in V_1\setminus W$.
Let $G'$ be the graph obtained from $G$ by deleting all edges incident to $v$ and adding all possible edges between $v$ and vertices in $V_2'$.
By Claim \ref{C277}, $G'$ is $F$-free.
By Claim \ref{C2666}, $v^*\notin U$, and by Claim \ref{C25}, we have $d_{V_1\setminus (W\cup U)}(v^*)\le(t-1)(2k+1)+k-1<t(2k+1)$.
Then by Claim \ref{C23}, we have
\begin{align*}
\rho(G) x_{v^*} & \le\sum_{w\in W\cup U}x_w+\sum_{w\in N_{V_1'}(v^*)}x_w+\sum_{w\in V_2'}x_w\\
&<|W|+|U|x_{v^*}+t(2k+1)x_{v^*}+\sum_{w\in V_2'}x_w\\
&\le|W|+\left(\frac{1}{2}\eta n+t(2k+1)\right)x_{v^*}+\sum_{w\in V_2'}x_w,
\end{align*}
so
\[
\sum_{w\in V_2'}x_w>(\rho(G)-\frac{1}{2}\eta n-t(2k+1))x_{v^*}-|W|.
\]
Note that
\[
\sum_{w\in N(v)}x_w=\sum_{w\in N_W(v)}x_w+\sum_{w\in N_{V(G)\setminus W}(v)}x_w\le |W|+d_G(v)x_{v^*}.
\]
Then, by Rayleigh's principle and Claims \ref{C21}, \ref{C24} and \ref{C2666}, we have
\begin{align*}
&\quad \rho(G')-\rho(G)\\
&\ge \frac{2x_v}{\mathbf{x}^\top\mathbf{x}}\left(\sum_{w\in V_2'}x_w-\sum_{w\in N(v)}x_w\right)\\
&> \frac{2x_v}{\mathbf{x}^\top\mathbf{x}}\left((\rho(G)-\frac{1}{2}\eta n
-t(2k+1))x_{v^*}-|W|-|W|-d_G(v)x_{v^*}\right)\\
&= \frac{2x_v}{\mathbf{x}^\top\mathbf{x}}\left((\rho(G)-\frac{1}{2}\eta n-t(2k+1)-d_G(v))x_{v^*}-2|W|\right)\\
&>  \frac{2x_v}{\mathbf{x}^\top\mathbf{x}}\left(\left(\rho(G)-\frac{1}{2}\eta n-t(2k+1)-\frac{n}{2}+2\eta n\right)
\left(\frac{1-\eta}{2}+\frac{2k-3}{4n}\right)-\frac{1}{2}\eta n\right)\\
&> \frac{x_v}{\mathbf{x}^\top\mathbf{x}}\left(\left(\frac{3}{2}\eta n-t(2k+1)\right)
\left(1-\eta+\frac{2k-3}{2n}\right)-\eta n\right)\\
&>  0,
\end{align*}
where the last inequality
%is equivalent to
%\[
%1-3\eta+\frac{3(2k-3)+4t(2k+1)}{2n}-\frac{2t(2k+1)}{\eta n}-\frac{t(2k+1)(2k-3)}{\eta n^2}>0,
%\]
%which
follows as $n$ is sufficiently large,
so we have a contradiction.
\end{proof}

By  Claim \ref{C27}, we have $V_i'=V_i\setminus W$ for $i=1,2$.

\begin{Claim}\label{C28}
$x_{v^*}>\frac{1}{2}+\frac{4k-3}{8n}$.
Moreover, for any $v\in V(G)$, $x_v\ge\left(1-\frac{6t(2k+1)}{n}\right)x_{v^*}$.
\end{Claim}

\begin{proof}
By Claims \ref{C266} and \ref{C27}, we have $|W|\le t-1$. By Claim \ref{C2666}, we have $x_{v^*}\ge\frac{\rho(G)-|W|}{n-|W|}>\frac{1}{2}+\frac{4k-3}{8n}$.

Suppose that there exists $v_0\in V(G)$ such that $x_{v_0}<\left(1-\frac{6t(2k+1)}{n}\right)x_{v^*}$.
Assume that $v^*\in V_1\setminus W$.
Let $G'$ be the graph obtained from $G$ by deleting all edges incident to $v_0$ and adding all possible edges between $v_0$ and vertices in $V_2'$.
By Claim \ref{C277}, $G'$ is $F$-free.
By Claim \ref{C25}, we have
\begin{align*}
\rho(G)x_{v^*}&\le\sum_{v\in W}x_v+\sum_{v\in N_{V_1'}(v^*)}x_v+\sum_{v\in V_2'}x_v
\le|W|+t(2k+1)x_{v^*}+\sum_{v\in V_2'}x_v,
\end{align*}
which implies that $\sum_{v\in V_2'}x_v\ge\left(\rho(G)-t(2k+1)\right)x_{v^*}-|W|$.
Then, by Claim \ref{C21},
\begin{align*}
\rho(G')-\rho(G)&\ge \frac{2x_{v_0}}{\mathbf{x}^\top\mathbf{x}}\left(\sum_{v\in V_2'}x_v-\sum_{v\in N(v_0)}x_v\right)\\
&\ge \frac{2x_{v_0}}{\mathbf{x}^\top\mathbf{x}}\left(\left(\rho(G)-t(2k+1)\right)x_{v^*}-|W|-\rho(G) x_{v_0}\right)\\
&>  \frac{2x_{v_0}}{\mathbf{x}^\top\mathbf{x}}\left(\left(\rho(G)-t(2k+1)\right)x_{v^*}-t+1-\rho(G)\left(1-\frac{6t(2k+1)}{n}\right)x_{v^*}\right)\\
&=\frac{2x_{v_0}}{\mathbf{x}^\top\mathbf{x}}\left(\frac{6t(2k+1)}{n}\rho(G)x_{v^*}-t(2k+1)x_{v^*}-t+1\right)\\
&>  \frac{2x_{v_0}}{\mathbf{x}^\top\mathbf{x}}\left(\frac{6t(2k+1)(n+k+2t-\frac{7}{2})}{2n}\left(\frac{1}{2}+\frac{4k-3}{8n}\right)-t(2k+1)-t\right)\\
&> \frac{2x_{v_0}}{\mathbf{x}^\top\mathbf{x}}\left(\frac{6t(2k+1)n}{2n}\times\frac{1}{2}-2t(k+1)\right) \\
&=  \frac{2x_{v_0}}{\mathbf{x}^\top\mathbf{x}}\left(\frac{3t(2k+1)}{2}-2t(k+1)\right)\\
&>  0,
\end{align*}
a contradiction.
\end{proof}

\begin{Claim}\label{C29}
For any $v\in W$, $d_G(v)=n-1$. Moreover, $|W|=t-1$.
\end{Claim}

\begin{proof}
Suppose that there exists $v_0\in W$ such that $d_G(v_0)\le n-2$.
Let $u_0$ be a vertex that is not adjacent to $v_0$ in $G$.
Then $\rho(G+u_0v_0)>\rho(G)$, so  $G+u_0v_0$ contains $F$.
Take $H_1:=P_1\vee P_{2k}$ in $F$ with  $u_0v_0\in E(H_1)$ in $G+u_0v_0$.
Let $H_2=F- V(H_1)$. Then $|V(H_2)|=(t-1)(2k+1)$.
By Claim \ref{C26}, $G-(V(H_2)\cup W\setminus\{v_0\})$ contains  $H_3:=P_1\vee P_{2k}$.
Therefore, $G$ contains  $H_2\cup H_3$, which is $F$, a contradiction.
So for any $v\in W$, $d_G(v)=n-1$.

By Claims \ref{C266} and \ref{C27}, we have $|W|\le t-1$.
Suppose that $|W|\le t-2$.
Note that $G$ contains $H:=(t-1)(P_1\vee P_{2k})$.

If there exists $w_0\in W\setminus V(H)$, then Claim \ref{C26} implies that
  $G-((W\cup V(H))\setminus\{w_0\})$ contains $G_1:=P_1\vee P_{2k}$, so $G$
  contains $H\cup G_1$, which is $F$, a contradiction.
Thus $W\subseteq V(H)$, and moreover
 $|V(H)\setminus W|=(t-1)(2k+1)-|W|>(t-1)(2k+1)-(t-1)=2k(t-1)$.
As $V(H)\setminus W\subseteq V_1\cup V_2$, $|V_i\cap (V(H)\setminus W)|\ge k(t-1)$ for some $i=1,2$.
Let $\widehat{V_i}\subseteq V_i\cap (V(H)\setminus W)$ such that $|\widehat{V_i}|=t-1-|W|$.
Construct a new graph $G'$  from $G-E([V(H)\setminus W])$ by adding all possible edges between $\widehat{V_i}$ and $V_i\setminus(W\cup \widehat{V_i})$.
As $W\subseteq V(H)$,
there are at most $t-1$ copies $P_1\vee P_{2k}$ in $G'$, so $G'$ is $F$-free.

For any $v\in \widehat{V_i}$, by Claim \ref{C25}, $d_{G[V_i']}(v)\le(t-1)(2k+1)+k-1$, and by Claim \ref{C22}, $d_{G'[V_i']}(v)\ge|V_i|-|W|-|\widehat{V_i}|=|V_i|-(t-1)\ge\frac{n}{2}-\eta n-t+1$.
Thus,
%\[
%\sum_{v\in \widehat{V_i}}\sum_{z: vz\in E(G'[V_i'])}1=|\widehat{V_i}|d_{G'[V_i']}(v)\ge(t-1-|W|)\left(\frac{n}{2}-\eta n-t+1\right)\ge\frac{n}{2}-\eta n-t+1,
%\]
\begin{align*}
\sum_{v\in \widehat{V_i}}\sum_{\substack{z\in V_i'\setminus\widehat{V_i} \\ vz\in E(G')}}1&\ge\sum_{v\in \widehat{V_i}}\left(\frac{n}{2}-\eta n-t+1\right)\\
&= |\widehat{V_i}|\left(\frac{n}{2}-\eta n-t+1\right)\\
&\ge (t-1-|W|)\left(\frac{n}{2}-\eta n-t+1\right)\\
&\ge \frac{n}{2}-\eta n-t+1,
\end{align*}
\begin{align*}
\sum_{v\in \widehat{V_i}}\sum_{\substack{z\in V_i'\setminus(V(H)\cap V_i)\\ vz\in E(G)}}1
&= \sum_{v\in \widehat{V_i}}d_{V_i'}(v)\le \sum_{v\in \widehat{V_i}}((t-1)(2k+1)+k-1)\\
&\le |\widehat{V_i}|((t-1)(2k+1)+k-1)\\
&= (t-1-|W|)((t-1)(2k+1)+k-1)\\
&\le (t-1)^2(2k+1)+(t-1)(k-1)
\end{align*}
and
\begin{align*}
\frac{1}{2}\sum_{v\in V(H)\setminus W} \sum_{\substack{z\in V(H)\setminus W\\ vz\in E(G)}}1
&\le {(t-1)(2k+1)-|W| \choose 2}\\
&\le {(t-1)(2k+1) \choose 2}\\
&= \frac{(t-1)^2(2k+1)^2-(t-1)(2k+1)}{2}.
\end{align*}
By Claim \ref{C28}, for any $v, z\in V(G)$, we have
\begin{align*}
x_{v}x_{z}&\ge \left(1-\frac{6t(2k+1)}{n}\right)^2x_{v^*}^2>
\left(1-\frac{6t(2k+1)}{n}\right)^2\left(\frac{1}{2}+\frac{4k-3}{8n}\right)^2\\
&= \left(\frac{1}{2}-\frac{24t(2k+1)-4k+3}{8n}-\frac{3t(2k+1)(4k-3)}{4n^2}\right)^2\\
&> \left(\frac{1}{4}-\frac{24t(2k+1)-4k+3}{8n}-\frac{3t(2k+1)(4k-3)}{4n^2}\right).
\end{align*}
Thus
\begin{align*}
&\quad \rho(G')-\rho(G)\\
&\ge\frac{2}{\mathbf{x}^\top\mathbf{x}}
\left(\sum_{v\in \widehat{V_i}}\sum_{\substack{z\in V_i'\setminus\widehat{V_i} \\ vz\in E(G')}}x_vx_z-\sum_{v\in \widehat{V_i}}\sum_{\substack{z\in V_i'\setminus(V(H)\cap V_i)\\ vz\in E(G)}}x_{v}x_{z}-\frac{1}{2}\sum_{v\in V(H)\setminus W} \sum_{\substack{z\in V(H)\setminus W\\ vz\in E(G)}}x_{v}x_{z}\right)\\
&> \frac{2}{\mathbf{x}^\top\mathbf{x}}\left(\frac{n}{2}-\eta n-t+1\right)\left(\frac{1}{4}-\frac{24t(2k+1)-4k+3}{8n}-\frac{3t(2k+1)(4k-3)}{4n^2}\right)\\
&\quad -\frac{2}{\mathbf{x}^\top\mathbf{x}}\left((t-1)^2(2k+1)+(t-1)(k-1)+\frac{(t-1)^2(2k+1)^2-(t-1)(2k+1)}{2}\right)\\
&>  0
\end{align*}
for sufficiently large $n$, a contradiction.
It follows that $|W|\ge t-1$, so $|W|=t-1$.
\end{proof}

\begin{Claim}\label{C10}
For $i=1,2$ and $v\in V_i'$,
$|V_{3-i}'\setminus N_{V_{3-i}'}(v)|\le d_{V_i'}(v)$.
\end{Claim}

\begin{proof}
Let $G'$ be the graph obtained from $G$ by deleting all edges incident to $v$ in $G[V_i']$ and adding all possible edges between $v$ and vertices in $V_{3-i}'$.
By Claim \ref{C277},  $G'$ is $F$-free.
By Claim \ref{C28},
\begin{align*}
0\le \rho(G)-\rho(G')&\le \frac{2x_v}{\mathbf{x}^\top\mathbf{x}}\left(d_{V_i'}(v)x_{v^*}-\sum_{w\in V_{3-i}'\setminus N_{V_{3-i}'}(v)}x_w\right)\\
&\le \frac{2x_vx_{v^*}}{\mathbf{x}^\top\mathbf{x}}\left(d_{V_i'}(v)-\left(1-\frac{6t(2k+1)}{n}\right)|V_{3-i}'\setminus N_{V_{3-i}'}(v)|\right),
\end{align*}
implying that $d_{V_i'}(v)-(1-\frac{6t(2k+1)}{n})|V_{3-i}'\setminus N_{V_{3-i}'}(v)|\ge0$, from which we have
\[
\left|V_{3-i}'\setminus N_{V_{3-i}'}(v)\right|\le\frac{d_{V_i'}(v)}{1-\frac{6t(2k+1)}{n}}
=d_{V_i'}(v)+\frac{6d_{V_i'}(v)t(2k+1)}{n-6t(2k+1)}<d_{V_i'}(v)+1
\]
as $d_{V_i'}(v)\le(t-1)(2k+1)+k-1$ by Claim \ref{C25}.
Thus, $|V_{3-i}'\setminus N_{V_{3-i}'}(v)|\le d_{V_i'}(v)$.
\end{proof}

\begin{Claim}\label{DD}
For any $v\in V_i'$, $d_{V_i'}(v)\le k-1$ for $i=1,2$.
\end{Claim}

\begin{proof}
Recall that $G$ contains $H:=(t-1)(P_1\vee P_{2k})$.
By Claim \ref{C29}, $W\subset V(H)$.
Then we show that every $P_1\vee P_{2k}$ in $H$ contains at almost one vertex of $W$.
Otherwise,  $H$ contains $G_1:=P_1\vee P_{2k}$ with $|V(G_1)\cap W|\ge 2$.
Let $\{v_1, v_2\}\subseteq V(G_1)\cap W$ and $H'=H-V(G_1)$.
By Claim \ref{C29}, we have $d_G(v_i)=n-1$ for any $i=1,2$. Then, by Claim \ref{C26},
we can find  $H'':=2(P_1\vee P_{2k})$ in $G$ such that $V(H'')\cap V(H')=\emptyset$.
So $G$ contains  $H'\cup H''$, which is $F$, a contradiction.
So, indeed, by Claim \ref{C29}, every $P_1\vee P_{2k}$ in $H$ contains exactly one vertex of $W$.

Next, we show that $d_{V_i'}(v)\le k-1$  for any $v\in V_i'$.
Suppose that this is not true. Then $d_{V_i'}(v_0)\ge k$ for some $v_0\in V_i'$.
For any $v\in V_i'$, we have $d_G(v)>(\frac{1}{2}-2\eta)n$ as $v\notin U$ and $d_{V_i}(v)< 2\eta n$ as $v\notin W$.
Then
\[
d_{V_{3-i}}(v)=d_G(v)-d_{V_i}(v)>\left(\frac{1}{2}-2\eta\right)n-2\eta n=\frac{n}{2}-4\eta n.
\]
By Claim \ref{C29},
\[
d_{V_{3-i}'}(v)\ge d_{V_{3-i}}(v)-|W|>\frac{n}{2}-4\eta n-(t-1)\ge\frac{n}{2}-5\eta n.
\]
Assume that $\{v_1, \dots, v_k\}\subseteq N_{V_i'}(v_0)$. %where $v_0$ is the center of $K_{1,k}$.
By Lemma \ref{cap} and Claim \ref{C22}, we have
\begin{align*}
|N_{V_{3-i}'}(v_0)\cap\dots\cap N_{V_{3-i}'}(v_k)| &\ge |N_{V_{3-i}'}(v_0)|+\dots+|N_{V_{3-i}'}(v_k)|-k|V_{3-i}|\\
&>(k+1)\left(\frac{n}{2}-5\eta n\right)-k\left(\frac{n}{2}+\eta n\right)\\
&= \frac{n}{2}-(6k+5)\eta n\\
&=\left(\frac{1}{2}-\frac{6k+5}{9(3k+1)}\right)n\\
&> k
\end{align*}
as $n$ is sufficiently large.
Therefore, there has a copy of $H':=P_1\vee P_{2k}$ with center $v_0$ in $G-W$.
By Claims \ref{C26} and \ref{C29}, we can find $H'':=(t-1)P_1\vee P_{2k}$ with center in $W$ such that $V(H')\cap V(H'')=\emptyset$, $G$ contains $H'\cup H''=F$, a contradiction.
\end{proof}

For $i=1,2$, let $d_i=\Delta(G[V_i'])$. Assume that $d_1\ge d_2$.  Then $d_1\le k-1$ by Claim \ref{DD}.
By Claim \ref{C29}, $|V_1'|+|V_2'|=n-t+1$. Assume that $|V_1'|=\frac{n-t+1}{2}+r$ and $|V_2'|=\frac{n-t+1}{2}-r$, where $r$ is not necessarily nonnegative.
Let
\[
B_r=\begin{pmatrix}
t-2 & \frac{n-t+1}{2}+r & \frac{n-t+1}{2}-r \\
t-1 & d_1 & \frac{n-t+1}{2}-r \\
t-1 & \frac{n-t+1}{2}+r & d_2\end{pmatrix}.
\]
View $\rho(B_r)$ as a function of $r$. It is easy to see that  $\rho(B_r)\ge\min\{\frac{n}{2}+\frac{t-1}{2}+r+d_2, \frac{n}{2}+\frac{t-1}{2}-r+d_1\}>\frac{n}{3}$.
%$\rho(B_0)\ge t-1+\frac{n-t+1}{2}>\frac{n}{2}$.
By a direct calculation, the characteristic polynomial of $B_r$ is
\begin{align*}
f_r(x)&= x^3-(d_1+d_2+t-2)x^2\\
&\quad -\left((t-1)(n-t+1)-(d_1+d_2)(t-2)+\frac{1}{4}(n-t+1)^2-d_1d_2-r^2\right)x\\
&\quad +\frac{1}{2}(d_1+d_2)(t-1)(n-t+1)-r(d_1-d_2)(t-1)-d_1d_2(t-2)\\
&\quad -\frac{1}{4}t(t-1)^2-\frac{1}{4}nt(n-2t+2)+r^2t.
\end{align*}
Then
\begin{align*}
f'_r(x)&=3x^2-2(d_1+d_2+t-2)x\\
&\quad -\left((t-1)(n-t+1)-(d_1+d_2)(t-2)+\frac{1}{4}(n-t+1)^2-d_1d_2-r^2\right),
\end{align*}
it can be checked that its largest root is less than $\frac{n}{3}$, implying that $f_r(x)$ is increasing for $x\ge\frac{n}{3}$.
For $s>0$,
\[
f_s(x)-f_{-s}(x)=-2s(d_1-d_2)(t-1)<0,
\]
from which we have $\rho(B_\frac{1}{2})>\rho(B_{-\frac{1}{2}})$, $\rho(B_1)>\rho(B_{-1})$ and $\rho(B_r)>\rho(B_{-r})$ if $r>1$.

For $a\le 0$, if   $x>\frac{n}{3}$, then
\begin{align*}
f_a(x)-f_{\frac{1}{2}-a}(x)&=-\frac{4a-1}{4}(-x-t+2(d_1-d_2)(t-1))< 0
\end{align*}
as $x$ is sufficiently large. So
$\rho(B_0)>\rho(B_{\frac{1}{2}})$, $\rho(B_{-\frac{1}{2}})>\rho(B_1)$ and $\rho(B_{r})>\rho(B_{\frac{1}{2}-r})$ if $r\le -1$.
Therefore,
\begin{align}\label{3rd}
\rho(B_0)>\rho(B_\frac{1}{2})>\rho(B_{-\frac{1}{2}})>\rho(B_1)>\rho(B_{-1})>\rho(B_{|r|})
\end{align}
for $|r|\ge1$.

Let $G^*=K_{t-1}\vee(G_1^*\vee(\frac{n-t+1}{2}-r)K_1)$, where $G_1^*$ is $P_{2k}$-free and is $(k-1)$-regular if $(k-1)|V(G_1^*)|$ is even and nearly $(k-1)$-regular otherwise.
It is easy to see that $G^*$ is $F$-free, so $\rho(G)\ge\rho(G^*)$.

Let $I_n(i)$ be the $n\times n$ diagonal matrix with only nonzero entry $1$ in its $(i,i)$-entry.

\begin{Claim}\label{C11}
Let $c$ be a constant and $G'$ be a graph obtained from $G$ by deleting at most $c$ edges from $G-W$, then $\Delta(G'[V_1'])+\Delta(G'[V_2'])\ge k-1$.
\end{Claim}

\begin{proof}
Let $\Delta(G'[V_1'])=d_1'$ and $\Delta(G'[V_2'])=d_2'$.
Let
%$I'$ be a diagonal matrix such that the quotient matrix of $A(G')+I'$ corresponding to the equitable  partition $V(G')=W\cup V_1'\cup  V_2'$ is
\[
Q_1=\begin{pmatrix}
t-2 & |V_1'| & |V_2'| \\
t-1 & d_1' & |V_2'| \\
t-1 & |V_1'| & d_2'\end{pmatrix}.
\]
By Lemmas  \ref{none} and \ref{QM} and
$\rho(G')\le \rho(Q_1)\le \rho(B_r)$.

Suppose that $d_1'+d_2'\le k-2$.
By Claim \ref{C28}, together with \eqref{3rd}, we have
\begin{align*}
\rho(G)&=\frac{\mathbf{x}^\top A(G)\mathbf{x}}{\mathbf{x}^\top\mathbf{x}}
\le \frac{\mathbf{x}^\top A(G')\mathbf{x}}{\mathbf{x}^\top\mathbf{x}}+\frac{2cx_{v^*}^2}{\mathbf{x}^\top\mathbf{x}}\\
&\le \rho(G')+\frac{2cx_{v^*}^2}{\mathbf{x}^\top\mathbf{x}}\\
%&\le \rho(Q_1)+\frac{2c}{n(1-\frac{6t(2k+1)}{n})^2}\\
&\le \rho(B_r)+\frac{2c}{n(1-\frac{6t(2k+1)}{n})^2}\\
&\le \rho(B_0)+\frac{2c}{n(1-\frac{6t(2k+1)}{n})^2}.
\end{align*}
Let $r=0,\frac{1}{2}$ in $G^*$. Then
$G^*=K_{t-1}\vee(G_1^*\vee(\lfloor\frac{n-t+1}{2}\rfloor)K_1)$, where $|V(G_1^*)|=\lceil\frac{n-t+1}{2}\rceil$. Let
\[
A=\begin{pmatrix}
t-2 & \lceil\frac{n-t+1}{2}\rceil & \lfloor\frac{n-t+1}{2}\rfloor \\
t-1 & k-1 & \lfloor\frac{n-t+1}{2}\rfloor \\
t-1 & \lceil\frac{n-t+1}{2}\rceil & 0\end{pmatrix}.
\]
Denote by $(1, y_1,y_2)^\top$ be a positive eigenvector associated with $\rho(A)$ and
\[
\mathbf{y}=(\underbrace{1, \dots, 1}_{t-1}, \underbrace{y_1, \dots, y_1}_{\lceil\frac{n-t+1}{2}\rceil}, \underbrace{y_2, \dots, y_2}_{\lfloor\frac{n-t+1}{2}\rfloor})^\top.
\]
Since
$\rho(A)=t-2+\lceil\frac{n-t+1}{2}\rceil y_1+\lfloor\frac{n-t+1}{2}\rfloor y_2$ and
$\rho(A)y_1=t-1+(k-1)y_1+\lfloor\frac{n-t+1}{2}\rfloor y_2$,
we have
\[
\rho(A)y_1=\rho(A)+1+\left(k-1-\left\lceil\frac{n-t+1}{2}\right\rceil\right)y_1,
\]
so $y_1=\frac{\rho(A)+1}{\rho(A)+\lceil\frac{n-t+1}{2}\rceil-k+1}$.
Similarly, %by $\rho(A)y_2=t-1+\lceil\frac{n-t+1}{2}\rceil y_1$,
$y_2=\frac{\rho(A)+1}{\rho(A)+\lfloor\frac{n-t+1}{2}\rfloor}$.
It follows that $1>y_1>y_2$.

If $G_1^*$ is $(k-1)$-regular, then $A$ is  the quotient matrix of $A(G^*)$ corresponding to the equitable partition $V(G^*)=V(K_{t-1})\cup V(G_1^*)\cup V((\lfloor\frac{n-t+1}{2}\rfloor)K_1)$, so we have
by Lemma \ref{QM}, $\rho(G)\ge\rho(G^*)=\rho(A)$.
Suppose that $G_1^*$ is nearly $(k-1)$-regular. Assume that $v\in V(G_1^*)$ such that $d_{V(G_1^*)}(v)=k-2$, so $A$ is  the quotient matrix of $A(G^*)+I_n(v)$ corresponding to the equitable partition $V(G^*)=V(K_{t-1})\cup V(G_1^*)\cup V((\lfloor\frac{n-t+1}{2}\rfloor)K_1)$.
By Lemma \ref{QM}, $\mathbf{y}$ is an eigenvector associated with $\rho(A(G^*)+I_n(v))$. So
\begin{align*}
\rho(G)\ge\rho(G^*)&\ge \frac{\mathbf{y}^\top A(G^*)\mathbf{y}}{\mathbf{y}^\top\mathbf{y}}
=\frac{\mathbf{y}^\top(A(G^*)+I_n(v))\mathbf{y}}{\mathbf{y}^\top\mathbf{y}}-
\frac{\mathbf{y}^\top I_n(v)\mathbf{y}}{\mathbf{y}^\top\mathbf{y}}>\rho(A)-\frac{1}{ny_2^2}.
\end{align*}
Thus, $\rho(G)>\rho(A)-\frac{1}{ny_2^2}$ whether $G_1^*$ is $(k-1)$-regular or
$G_1^*$ is nearly $(k-1)$-regular.
By a direct calculation, the characteristic polynomial of $A$ is
\begin{align*}
h(x)&= x^3-(k+t-3)x^2\\
&\quad -\left((n-t+1)(t-1)+\left\lfloor\frac{n-t+1}{2}\right\rfloor\left\lceil\frac{n-t+1}{2}\right\rceil-(k-1)(t-2)\right)x\\
&\quad -t\left\lfloor\frac{n-t+1}{2}\right\rfloor\left\lceil\frac{n-t+1}{2}\right\rceil+(k-1)(t-1)\left\lfloor\frac{n-t+1}{2}\right\rfloor.
\end{align*}
For $x>\frac{n}{2}$, since $d_1+d_2\le k-2$, we have from the expressions for $f_0(x)$ and $h(x)$ that
\begin{align*}
&\quad f_0(x)-h\left(x+\frac{1}{ny_2^2}+\frac{2c}{n(1-\frac{6t(2k+1)}{n})^2}\right)\\
&= x^3-(d_1+d_2+t-2)x^2-\left(x+\frac{1}{ny_2^2}+\frac{2c}{n(1-\frac{6t(2k+1)}{n})^2}\right)^3\\
&\quad +(k+t-3)\left(x+\frac{1}{ny_2^2}+\frac{2c}{n(1-\frac{6t(2k+1)}{n})^2}\right)^2+O(n)\\
&\ge x^3-(k+t-4)x^2-\left(x^3+3\left(\frac{1}{ny_2^2}+\frac{2c}{n(1-\frac{6t(2k+1)}{n})^2}\right)x^2\right)\\
&\quad +(k+t-3)x^2+O(n)\\
&\ge\left(1-\frac{3}{ny_2^2}-\frac{6c}{n-12t(2k+1)}\right)x^2+O(n)\\
&>\left(1-\frac{3}{ny_2^2}-\frac{6c}{n-12t(2k+1)}\right)\frac{n^2}{4}+O(n)>0,
\end{align*}
where the last inequality follows as  $\rho (A)$ is bounded from below by the minimum row sum of $A$, which implies that $y_2=\frac{\rho(A)+1}{\rho(A)+\lfloor\frac{n-t+1}{2}\rfloor}>\frac{1}{2}$. %$\rho(A)\ge \frac{n+t-1}{2}$
%and $n$ is sufficiently large.
So
\[
\rho(B_0)<\rho(A)-\frac{1}{ny_2^2}-\frac{2c}{n(1-\frac{6t(2k+1)}{n})^2}.
\]
It now follows that $\rho(G)\le\rho(B_0)+\frac{2c}{n(1-\frac{6t(2k+1)}{n})^2}<\rho(A)-\frac{1}{ny_2^2}$, a contradiction.
\end{proof}

\begin{Claim}\label{C12}
$\Delta(G[V_1'])=k-1$ and $E(G[V_2'])=\emptyset$.
%let $A_1=\{v\in V_1': d_{V_1'}(v)=k-1\}$, then $|A_1|\ge 2k^2+1$.
\end{Claim}

\begin{proof}
By Claim \ref{DD},
we have $\Delta(G[V_i'])\le k-1$ for  $i=1,2$.
It suffices to show  that $\Delta(G[V_i'])\ge k-1$.
As  $\Delta(G[V_1'])\ge\Delta(G[V_2'])$, we have $\Delta(G[V_1'])\ge\frac{k-1}{2}$
by Claim \ref{C11} with $c=0$.

Suppose that $k=3$ and $\Delta(G[V_1'])=1$. By Claim \ref{C11}, we have $\Delta(G[V_2'])=\Delta(G[V_1'])=1$.
%%So $E(G[V_i'])$ is the maximum matching for $i=1, 2$.% as $\rho(G)=\mathrm{spex}(n, F)$.
%%As otherwise, we construct a new graph $G'$ obtain from $G$ by adding a matching edge in it, which can be easily checked is $F$-free, but $\rho(G')>\rho(G)$, a contradiction.
%As $V(G)=V_1\cup V_2$ is a partition of $V(G)$ such that $e(V_1, V_2)=e(V_1', V_2')$ is maximum,
%$e(V_1', V_2')=|V_1'||V_2'|$ is maximum, so $||V_1'|-|V_2'||\le 1$.
%
Let $B'$ and $B''$ be the matrix of $B_r$ with $d_1=d_2=1$ and $d_1=2$, $d_2=0$, respectively.
Denote by $g_r(x)$ and $h_r(x)$  the characteristic polynomial of $B'$ and $B''$, respectively.
By a direct calculation, $g_r(x)-h_r(x)=x-(t-1)(2r-1)+1>0$ for $x\ge\rho(B'')$, implying that $\rho(B')<\rho(B'')$.
Since $(k-1)|V_1'|=2|V_1'|$ is even, $G_1^*$ is $2$-regular  and $\rho(G^*)=\rho(B'')$ by Lemma \ref{QM}.
If $G[V_i']$ is $1$-regular for any $i=1,2$, then by Lemma \ref{QM}, $\rho(G)=\rho(B')<\rho(B'')=\rho(G^*)$, a contradiction.
So $G[V_i']$ is nearly $1$-regular for some $i=1,2$. By Lemma \ref{jn}, $\rho(G)\le\rho(B')<\rho(B'')=\rho(G^*)$, also a contradiction.
It now follows that  $\Delta(G[V_1'])\ge 2=k-1$, as desired.

Suppose that $k\ge4$.
Recall that $G$ contains $(t-1)$ copies of $P_1\vee P_{2k}$, then the center of each is in $W$ by Claims \ref{C29} and \ref{DD},
%By Claims \ref{C26} and, $G$ contains $(t-1)$ copies of $P_1\vee P_{2k}$, the center of each is in $W$,
so $G[V_1'\cup V_2']$ is $(P_1\vee P_{2k})$-free.

Firstly, we  show that $\Delta(G[V_1'])\ge k-2$.
Assume that $v\in V_1'$ with $d_{V_1'}(v)=\Delta(G[V_1'])\ge\frac{k-1}{2}$.
Let $\left\{v_1,\dots,v_{\lceil\frac{k-1}{2}\rceil}\right\}\subseteq N_{V_1'}(v)$.
Let
$V_2''=\bigcap_{i=0}^{\lceil\frac{k-1}{2}\rceil}N_{V_2'}(v_i)$ with $v_0=v$.
We assert that $G[V_2'']$ contains no  $\lceil\frac{k}{2}\rceil P_3$. Otherwise,
there is a path $P$ containing all the vertices of some $\lceil\frac{k}{2}\rceil P_3$ and
in $N_{V_1'}(v)$, so $|V(P)|\ge 2k$ and
that $G[\{v\}\cup V(P)]$ contains $P_1\vee P_{2k}$, contradicting the fact that $G[V_1'\cup V_2']$ is $(P_1\vee P_{2k})$-free.
%So $G[V_2'']$ contains no $\lceil\frac{k}{2}\rceil P_3$.
%%For a set of vertex-disjoint paths in $G[V_2'']$, then there is a path containing all the the vertices of these paths and $v_1, \dots,v_{\lceil\frac{k-1}{2}\rceil}$, then as $G[V_1'\cup V_2']$ is $(P_1\vee P_{2k})$-free, there are
%%at almost $2k-1-\lceil\frac{k-1}{2}\rceil=\lfloor\frac{3k-1}{2}\rfloor$ vertices on these paths, implying that $G[V_2'']$ contains no  $\lceil\frac{k}{2}\rceil P_3$. %or $P_{\ell\ge4}$.
%Assume that $G[V_2'']$ contains $\ell_1K_1\cup\ell_2K_2$, where $\ell_1$ and $\ell_2$ are non-negative integers.
%Let $H:=G[V_2''\setminus V(\ell_1K_1\cup\ell_2K_2)]$.
%Then the component of $H$ contains at least one $P_3$.
%Suppose that $\nu(H)\ge\lceil\frac{k}{2}\rceil (k-1)$.
%Note that $\nu(H')\le k-1$, where $H'$ is a graph with $\Delta(H')\le k-1$ and containing exactly one vertex-disjoint $P_3$.
%So $H$ contains at least $\lceil\frac{k}{2}\rceil P_3$, a contradiction.
%Thus, by Lemma \ref{MD}, we have
%\[
%e(H)\le \nu(H)(\Delta(H)+1)<\left\lceil\frac{k}{2}\right\rceil (k-1)k.
%\]
For any $u_1, u_2\in V(H)$, if the distance between $u_1$ and $u_2$ is at least $3$, $d_H(u_1)\ge2$ and  $d_H(u_2)\ge2$, then they belong to two vertex-disjoint $P_3$.
Note that for  any vertex in $V_2''$, there are at most $(k-1)(k-2)+k-1+1=(k-1)^2+1$ vertices  in $V_2''$ within distance $2$ from it.
So there are less than $\lceil\frac{k}{2}\rceil\left((k-1)^2+1\right)<\frac{k^3}{2}$ vertices with degree at least $2$ in $G[V_2'']$.
Then
\[
e(G[V_2''])=\frac{1}{2}\sum\limits_{w\in V_2''}d_{V_2''}(w)<\frac{1}{2}\left(|V_2''|-\frac{k^3}{2}+\frac{k^3}{2}(k-1)\right)=\frac{|V_2''|}{2}+\frac{k^3(k-2)}{4}.
\]
By Claim \ref{C10}, we have
\[
|V_2'\setminus V_2''|\le \left(\left\lceil\frac{k-1}{2}\right\rceil+1\right)(k-1)=\left\lceil\frac{k+1}{2}\right\rceil(k-1).
\]
So
\[
e(G[V_2'])\le e(G[V_2''])+|V_2'\setminus V_2''|(k-1)=e(G[V_2''])+|V_2'\setminus V_2''|(k-1)<\frac{|V_2''|}{2}+c_0,
\]
where
\[
c_0=\frac{k^3(k-2)}{4}+\left\lceil\frac{k+1}{2}\right\rceil(k-1)^2.
\]
Let $G'$ be the graph obtained from $G$ be deleting at most $c_0$ edges in $G[V_2']$ such that $\Delta(G'[V_2'])\le 1$.
By Claim \ref{C11}, we have $\Delta(G'[V_1'])+\Delta(G'[V_2'])\ge k-1$.
So  $\Delta(G[V_1'])=\Delta(G'[V_1'])\ge k-2$.

Secondly,
we show that $\Delta(G[V_1'])\ge k-1$.
Let $\{v_1,\dots,v_{k-2}\}\subseteq N_{V_1'}(v)$ and $V_2''=\bigcap_{i=0}^{k-2}N_{V_2'}(v_i)$ with $v_0=v$.
We assert that  $\nu(G[V_2''])\le2$. Otherwise,   $\nu(G[V_2''])\ge3$. By Claims \ref{C22} and \ref{C29}, $|V_2''|\ge|V_2|-|W|-|V_2'\setminus V_2''|>\frac{n}{2}-\eta n-(t-1)-(k-1)^2$, so we can require that $|V_2''|\ge |V(3K_2)|+k-4$.  Consequently,
there is a path containing the vertices of $3K_2$, $k-4$ vertices in $V_2''\setminus V(3K_2)$ and the vertices from $N_{V_1'}(v)$ with order is at least $6+k-4+k-2=2k$, implying that
the vertices of this path together with $v$ will generate a copy of $P_1\vee P_{2k}$, a contradiction.
%This shows $G[V_2'']$ contains at almost $2K_2$.
%Since for any $w\in V_2'$, $d_{V_2'}(w)\le k-1$, the ends of any edge in $G[V_2'']$ are incident to at most $2(k-2)$ vertices.
So by Lemma \ref{MD}, we have
\[
e(G[V_2''])\le\nu(G[V_2''])(\Delta(G[V_2''])+1)\le2k.
\]
By  Claim \ref{C10},  $|V_2'\setminus V_2''|\le(k-1)^2$.
Thus,
\[
e(G[V_2'])\le e(G[V_2''])+|V_2'\setminus V_2''|(k-1)\le2k+(k-1)^3.
\]
Let $G''=G-E(G[V_2'])$.
Then $\Delta(G''[V_2'])=0$.
By Claim \ref{C11}, we have $\Delta(G[V_1'])=\Delta(G''[V_1'])\ge k-1$.
%Therefore, $\Delta(G[V_1'])=k-1$.

Let $V_1^*=\{v\in V_1': d_{V_1'}(v)=k-1\}$.
Suppose  that $|V_1^*|$ is independent of $n$.
Let $\widehat G$ be a graph obtained from $G$ by deleting $E(G[V_2'])$ and $|V_1^*|$ edges in $G[V_1']$ such that $\Delta(\widehat G[V_1'])=k-2$.
So $\Delta(\widehat G[V_1'])+\Delta(\widehat G[V_2'])<k-1$, contradicting Claim \ref{C11}.
So $|V_1^*|$ is large enough.
Suppose that there exists $v_1v_2\in E(G[V_2'])$.
Let $V_1''=N_{V_1'}(v_1)\cap N_{V_1'}(v_2)$.
By Claim \ref{C10}, $|V_1'\setminus V_1''|\le 2(k-1)$.
Suppose that for any $w\in V_1^*$, there is $|(N_{V_1'}(w)\cup\{w\})\cap(V_1'\setminus V_1'')|\ge1$.
Then $|V_1^*|\le|V_1'\setminus V_1''|(k-1)\le 2(k-1)^2$, a contradiction.
So there exists $v_0\in V_1^*\cap V_1''$ such that $N_{V_1'\setminus V_1''}(v_0)=\emptyset$, i.e.,
$d_{V_1''}(v_0)=k-1$.
Let $N_{V_1''}(v_0)=\{v_1', \dots, v_{k-1}'\}$.
Then by Claim \ref{C10},
\[
|V_2'|-|N_{V_2'}(v_0)\cap N_{V_2'}(v_1')\cap\dots\cap N_{V_2'}(v_{k-1}')|\le k(k-1).
\]
So $G[N_{V_2'}(v_0)\cap N_{V_2'}(v_1')\cap\dots\cap N_{V_2'}(v_{k-1}')]$ contains  $K_2\cup (k-1)K_1$, implying that $G[V_1'\cup V_2']$ contains $P_1\vee P_{2k}$, a contradiction.
\end{proof}

By Claims \ref{C10} and \ref{C12}, we have $G=K_{t-1}\vee(G_1\vee (\frac{n-t+1}{2}-r)K_1)$, where $G_1=G[V_1']$ with $\Delta(G_1)=k-1$.

Let
\[
A_r=\begin{pmatrix}
t-2 & \frac{n-t+1}{2}+r & \frac{n-t+1}{2}-r \\
t-1 & k-1 & \frac{n-t+1}{2}-r \\
t-1 & \frac{n-t+1}{2}+r & 0\end{pmatrix}.
\]
Let $(1, z_1,z_2)^\top$ be a positive eigenvector associated with $\rho(A_r)$. It can be checked that $1>z_1>z_2$ by similar argument as in the proof of Claim \ref{C11}.
Let
\[
\mathbf{z}=(\underbrace{1, \dots, 1}_{t-1}, \underbrace{z_1, \dots, z_1}_{|V(G_1^*)|}, \underbrace{z_2, \dots, z_2}_{\frac{n-t+1}{2}-r})^\top.
\]

If $G_1^*$ is $(k-1)$-regular, then
$A_r$ is the quotient matrix of $G^*$ corresponding to the partition $V(G^*)=V(K_{t-1})\cup  V(G_1^*)\cup V((\frac{n-t+1}{2}-r)K_1)$.
By Lemma \ref{QM}, we have
\[
\rho(G)\ge\rho(G^*)=\rho(A_r).% \text{\ \ \ or\ \ \ } \rho(G)\ge\rho(G^*)=\rho(A_r').
\]
%Let $A_r$ be a matrix from $B_r$ with $d_1=k-1$ and $d_2=0$, i.e.,
%\[
%A_r=\begin{pmatrix}
%t-2 & \frac{n-t+1}{2}+r & \frac{n-t+1}{2}-r \\
%t-1 & k-1 & \frac{n-t+1}{2}-r \\
%t-1 & \frac{n-t+1}{2}+r & 0\end{pmatrix}.
%\]
%and $G^*=K_{t-1}\vee(G_1\vee(\frac{n-t+1}{2}-r)K_1)$, where $G_1$ is a $(k-1)$-regular and $P_{2k}$-free graph of order $\frac{n-t+1}{2}+r$ if $(k-1)|V(G_1)|$ is even, and otherwise, it is a nearly $(k-1)$-regular and $P_{2k}$-free graph, which can be checked is $F$-free.
%If $G_1^*$ is $(k-1)$-regular, then by Lemma \ref{QM},
%\[
%\rho(G)\ge\rho(G^*)=\rho(A_r).
%\]

Suppose that $G_1^*$ is nearly $(k-1)$-regular. Assume that $v\in V(G_1^*)$ such that  $d_{G_1^*}(v)=k-2$, so $A_r$ is  the quotient matrix of $A(G^*)+I_n(v)$ corresponding to the equitable partition $V(G^*)=V(K_{t-1})\cup  V(G_1^*)\cup V((\frac{n-t+1}{2}-r)K_1)$.
By Lemma \ref{QM}, $\rho(A(G^*)+I_n(v))=\rho(A_r)$ and $\mathbf{z}$ is an eigenvector associated with $\rho(A(G^*)+I_{n}(v))$.
Hence,
\begin{align*}
\mathbf{x}^\top (\rho(G)-\rho(A_r))\mathbf{z}&\ge \mathbf{x}^\top (\rho(G^*)-\rho(A(G^*)+I_n(v)))\mathbf{z}\\
&\ge \mathbf{x}^\top (A(G^*)-(A(G^*)+I_n(v)))\mathbf{z}\\
&= -x_vz_1\\
&\ge -x_{v^*}z_1.
%\mathbf{x}^\top (\rho(G)-\rho(B_r'))\mathbf{y}\ge\mathbf{x}^\top (\rho(G)-\rho(A))\mathbf{y}\ge-x_{v^*}\sigma_1.
\end{align*}
Thus, in either case, we have
\begin{equation}\label{E1}
\mathbf{x}^\top (\rho(G)-\rho(A_r))\mathbf{z}\ge -x_{v^*}z_1.
\end{equation}

\begin{Claim}\label{C13}
$G=G^*$ for some $r$.
%If $(k-1)|V_1'|$ is even, then $G[V_1']$ is $(k-1)$-regular. And otherwise, it is nearly $(k-1)$-regular.
\end{Claim}

\begin{proof}
Suppose to the contrary that $G_1$ is not $(k-1)$-regular if $(k-1)|V_1'|$ is even and $G_1$ is not nearly $(k-1)$-regular if $(k-1)|V_1'|$ is odd.
%Then $Q_{\pi}(G^*)=Q_{\pi}(G)+I_3$, where $I_3$ is a diagonal matrix with $I_{22}>0$ and others are zero.
%Since $A(G)$ is irreducible matrix, $Q_{\pi}(G)$ is also.
%By Perron-Frobenius Theorem and Lemma \ref{QM},
%\[
%\rho(G^*)\ge\rho(Q_{\pi}(G^*))>\rho(Q_{\pi}(G))=???\le???\rho(G),
%\]
%a contradiction.
%
%
%Suppose that there exists $v\in V_1'$ such that $d_{V_1'}(v)\le k-2$ for $(k-1)|V_1'|$ is even.
%%And since $2e(G[V_1'])=(k-1)|V_1'|+d_{V_1'}(v)-k+1$ is even, $d_{V_1'}(v)\le k-3$.
%If $(k-1)|V_1'|$ is odd, then suppose that there exists $v\in V_1'$ such that $d_{V_1'}(v)\le k-3$ or there exist two vertices $v$ and $w\in V_1'$ such that $d_{V_1'}(v)\le d_{V_1'}(w)\le k-2$.
%Similarly, since $2e(G[V_1'])$ is even, we have $d_{V_1'}(v)\le k-4$ for the former case and $d_{V_1'}(v)\le k-3$, $d_{V_1'}(w)\le k-2$ for the latter case.
Let $I'$ be a diagonal matrix with $n\times n$ such that the quotient matrix of $A(G)+I'$ according to the equitable partition $V(G)=V(K_{t-1})\cup V_1'\cup V_2'$ is $A_r$.
Note that $2e(G_1)=\sum\limits_{v\in V_1'}d_{V_1'}(v)$.
Let $\sum\limits_{v\in V_1'}d_{V_1'}(v)=(k-1)|V_1'|-a$, which is even, then $a\ge2$ if $(k-1)|V_1'|$ is even and $a\ge3$ otherwise.
So the sum of non-zero entries in $I'$ is at least $2$.
Therefore, by Lemma \ref{QM} and Claim \ref{C28}, we have
\begin{align*}
\mathbf{x}^\top(\rho(G)-\rho(A_r))\mathbf{z}&= \mathbf{x}^\top(\rho(G)-\rho(A(G)+I'))\mathbf{z}=\mathbf{x}^\top(A(G)-(A(G)+I'))\mathbf{z}\\
&=-\mathbf{x}^\top I'\mathbf{z}=-\sum\limits_{i=1}^nI'_{v_i,v_i}x_{v_i}z_{v_i}=-\sum\limits_{i=1}^nI'_{v_i,v_i}x_{v_i}z_1\\
&\le-\sum\limits_{i=1}^nI'_{v_i,v_i}\left(1-\frac{6t(2k+1)}{n}\right)x_{v^*}z_1\\
&\le -2\left(1-\frac{6t(2k+1)}{n}\right)x_{v^*}z_1\\
&<-x_{v^*}z_1,
\end{align*}
contradicting (\ref{E1}).
\end{proof}

By Claim \ref{C13},
$G=K_{t-1}\vee(G_1^*\vee(\frac{n-t+1}{2}-r)K_1)$ for some $r$.

\begin{Claim} \label{Fca}
If $k$ is odd, then
\[
r=\begin{cases}
0 & \mbox{if $n-t$ is odd},\\
\frac{1}{2} & \mbox{otherwise}
\end{cases}
\]
and if $k$ is even, then
\[
r=\begin{cases}
0 & \mbox{if $n-t\equiv 3\!\!\pmod{4}$},\\
-\frac{1}{2} & \mbox{if  $n-t\equiv 0\!\!\pmod{4}$},\\
1 & \mbox{if $n-t\equiv 1\!\!\pmod{4}$},\\
\frac{1}{2} & \mbox{if $n-t\equiv 2\!\!\pmod{4}$}.
\end{cases}
\]
\end{Claim}

\begin{proof} We write $G$ as $G(r)$ for fixed $n,k$ and $t$.
In the following,  we will determine the value of $r$.
%Let
%\[
%A_r'=\begin{pmatrix}
%t-2 & \frac{n-t+1}{2}+r & \frac{n-t+1}{2}-r \\
%t-1 & k-1-\frac{2}{n-t+2r+1} & \frac{n-t+1}{2}-r \\
%t-1 & \frac{n-t+1}{2}+r & 0\end{pmatrix},
%\]
%whose characteristic polynomial is
%\begin{align*}
%\psi_r(x)&=x^3-\left(k+t-3-\frac{2}{n+2r-t+1}\right)x^2\\
%&\quad -\frac{1}{4}\left((n-t)(n+3t-2)-4kt+8k+8t-4r^2-11+\frac{4(2t-4)}{n+2r-t+1}\right)x\\
%%&\quad -\left(\frac{(n-2r-t-1)(tn+2rt-2kt-t^2+3t+2k-2)}{4}+\frac{(t-1)(n-2r-t-1)}{n+2r-t+1}\right)\\
%&\quad -\frac{n-2r-t+1}{4}\left(tn+2rt-2kt-t^2+3t+2k-2+\frac{4(t-1)}{n+2r-t+1}\right).
%\end{align*}
%Let $s>0$ and $a<0$.
%Then for $x>\frac{n}{2}$, we have
%\[
%\psi_s(x)-\psi_{-s}(x)=-\frac{2s}{(n+2s-t+1)(n-2s-t+1)}\left((k-1)(t-1)n^2+4x^2-O(n)\right)<0,
%\]
%and
%\[
%\psi_a(x)-\psi_{\frac{1}{2}-a}(x)=-\frac{4a-1}{(n+2a-t+1)(n-2a-t+2)}\left(-n^2x-O(n^2)\right)<0
%\]
%as $n$ is sufficiently large and $4a-1<0$.
%Therefore,
%\begin{align}\label{EEE}
%\rho(A_0')>\rho\left(A'_{\frac{1}{2}}\right)>\rho\left(A'_{-\frac{1}{2}}\right)>\rho(A'_1)
%>\rho(A'_{-1})>\rho(A'_{|r|})
%\end{align}
%with $|r|>1$.
%Moreover,
From (\ref{3rd}), we have
\begin{align}\label{E2}
\rho(A_0)>\rho\left(A_{\frac{1}{2}}\right)>\rho\left(A_{-\frac{1}{2}}\right)>\rho(A_1)
>\rho(A_{-1})>\rho(A_{|r|})
\end{align}
with $|r|>1$.

\noindent{\bf Case 1.} $k$ is odd.

By Claim \ref{C13}, we have $G_1^*$ is $(k-1)$-regular.
Then $\rho(G(r))=\rho(A_r)$ by Lemma \ref{QM}. So by (\ref{E2}), we have $r=0$ if $n-t$ is odd, and $r=\frac{1}{2}$ otherwise.
%As in the later case, $r=0$.

\noindent{\bf Case 2.} $k$ is even.

\noindent{\bf Case 2.1.} $n-t\equiv 3\!\!\pmod{4}$.

If $(k-1)(\frac{n-t+1}{2}+r)$ is even, then $G_1^*$ is $(k-1)$-regular by Claim \ref{C13}, and
by Lemma \ref{QM}, we have $\rho(G(r))=\rho(A_r)$, so $r=0$ from (\ref{E2}).
Suppose that $(k-1)(\frac{n-t+1}{2}+r)$ is odd. Then $|r|\ge1$ and $G_1^*$ is nearly $(k-1)$-regular by Claim \ref{C13}, and by Lemma \ref{jn} and (\ref{E2}), we have $\rho(G(r))\le\rho(A_{r})<\rho(A_0)$, contradicting the choice of $G$.
So $(k-1)(\frac{n-t+1}{2}+r)$ is even, and then $r=0$.

\noindent{\bf Case 2.2.} $n-t\equiv 0\!\!\pmod{4}$.

If $(k-1)(\frac{n-t+1}{2}+r)$ is even, then $\rho(G(r))=\rho\left(A_r\right)$ by Lemma \ref{QM} and Claim \ref{C13}, so $r=-\frac{1}{2}$ from (\ref{E2}).
Suppose that $(k-1)(\frac{n-t+1}{2}+r)$ is odd. Then $r=\frac{1}{2}$ or $|r|>\frac{1}{2}$, and $G_1^*$ is nearly $(k-1)$-regular by Claim \ref{C13}, and by Lemma \ref{jn} and (\ref{E2}), we have $\rho(G(r))\le\rho(A_{r})\le\rho\left(A_{\frac{1}{2}}\right)$.
Assume that $d_{V_1'}(v)=k-2$ for $v\in V_1'$.
Let
\[
c'=c'_{n,k,t}=\frac{(1-\frac{6t(2k+1)}{n})^2(\frac{1}{2}+\frac{k-2}{2n})^2}{n}=\frac{(\frac{1}{2}-\frac{6t(2k+1)-k+2}{2n}-\frac{6t(2k+1)(k-2)}{2n^2})^2}{n}=\frac{1}{4n}-o(n^2).
\]
By Lemma \ref{QM} and Claim \ref{C28},
\begin{align*}
\rho(G)&=\frac{\mathbf{x}^\top A(G)\mathbf{x}}{\mathbf{x}^\top \mathbf{x}}=\frac{\mathbf{x}^\top (A(G)+I_n(v)-I_n(v))\mathbf{x}}{\mathbf{x}^\top \mathbf{x}}\\
&\le \rho(A(G)+I_n(v))-\frac{\mathbf{x}^\top I_n(v)\mathbf{x}}{\mathbf{x}^\top \mathbf{x}}\\
&< \rho (A_{\frac{1}{2}})-\frac{x_v^2}{n}\\
&\le \rho (A_{\frac{1}{2}})-c'.
\end{align*}
Let $f(x)$ and $h(x)$ be the  characteristic polynomial of $A_{-\frac{1}{2}}$ and $A_{\frac{1}{2}}$, respectively.
Then
\begin{align*}
f(x)&=x^3-(k+t-3)x^2\\
&\quad -\left((t-1)(n-t+1)-(k-1)(t-2)+\frac{1}{4}(n-t)^2+\frac{n-t}{2}\right)x\\
&\quad -\frac{1}{4}(n-t+2)(tn-(t-1)(2k+t-1)+1)
\end{align*}
and
\begin{align*}
h(x)&=x^3-(k+t-3)x^2\\
&\quad -\left((t-1)(n-t+1)-(k-1)(t-2)+\frac{1}{4}(n-t)^2+\frac{n-t}{2}\right)x\\
&\quad -\frac{1}{4}(n-t)(t(n+2)-(t-1)(2k+t-1)-1).
\end{align*}
Note that for $x>\frac{n}{2}$,
\[
-3c'x^2+\frac{(n-t)^2}{4}c'<-\frac{3}{16}n+\frac{1}{16}n+O(1)=-\frac{n}{8}+O(1).
\]
By a direct calculation, for $x>\frac{n}{2}$,
\[
f(x)-h\left(x+c'\right)= -3c'x^2+\frac{(n-t)^2}{4}c'-\frac{n}{2}+O(1)<-\frac{n}{8}-\frac{n}{2}+O(1)<0.
\]
So \[
f\left(\rho(A_{\frac{1}{2}})-c'\right)
-h(\rho(A_{\frac{1}{2}}))=f(\rho(A_{\frac{1}{2}})-c')<0,
\]
implying that
\[
\rho(A_{-\frac{1}{2}})>\rho(A_{\frac{1}{2}})-c'\ge\rho(G),
\]
a contradiction. Thus, $r=-\frac{1}{2}$.
 %Denoted the extremal graph by $G_2$.
%Then the quotient matrix of $G_2$ according to an equitable partition $\pi$ is
%\[
%B'=\begin{bmatrix}
%t-2 & \frac{n-t+2}{2} & \frac{n-t}{2} \\
%t-1 & k-1-\frac{2}{n-t+2} & \frac{n-t}{2} \\
%t-1 & \frac{n-t+2}{2} & 0\end{bmatrix},
%\]
%whose characteristic polynomial is
%\begin{align*}
%G_1^*(x)&=x^3-\left(k+t-3+\frac{2}{n-t+2}\right)x^2+\frac{1}{4}(n^2-(t-2)(4k+5t-6)-4)x\\
%&\quad -\left(\frac{(t-1)(n^2-t^2+4t)-4}{2n-2t+4}\right)x-\frac{n-t}{4}(nt-(t-1)(2k+t-1)-1)\\
%&+\frac{(n-t)(k-t-kt-4)}{n-t+2}.
%\end{align*}
%Note that
%\[
%f_{-\frac{1}{2}}(x)-g_{\frac{1}{2}}(x)=\frac{1}{n-t+2}\left(-2x^2+2(t-2)x+k(t-1)n-kt(t-3)-2(t-1)-2k\right).
%\]
%Let $\alpha=\sqrt{2k(t-1)n+(1-2k)t^2+(6k-8)t-4k+8}$.
%Then the roots of $f_{-\frac{1}{2}}(x)-g_{\frac{1}{2}}(x)$ are $x_1=\frac{1}{2}(t-1+\alpha)$ and $x_2=\frac{1}{2}(t-1-\alpha)$.
%By a direct calculation, we have
%\begin{align*}
%f_{-\frac{1}{2}}(x_1)&= -\frac{3t+\alpha-2}{8}n^2+\frac{(t-1)(2k+t-\alpha)}{4}n\\
%&\quad +\frac{1}{8}(\alpha^3+(t-2k)\alpha^2+2t(t-1)\alpha-2kt(t-2))<0.
%\end{align*}
%So $\rho(G_1^*)>x_1$.
%As $f_{-\frac{1}{2}}(x)<g_{\frac{1}{2}}(x)$ for $x\ge x_1$, $\rho\left(A_{-\frac{1}{2}}\right)>\rho\left(A_{\frac{1}{2}}'\right)$.
%Therefore, $r=-\frac{1}{2}$.

%And if $(k-1)(\frac{n-t+1}{2}+r)$ is odd, then $r=\frac{1}{2}$. Denoted the extremal graph by $G_2$.
%So the value of $r\in\{-\frac{1}{2}, \frac{1}{2}\}$ is determined by comparing $\rho(G_1^*)$ and $\rho(G_2)$.

\noindent{\bf Case 2.3.} $n-t\equiv 1\!\!\pmod{4}$.

If $(k-1)(\frac{n-t+1}{2}+r)$ is even, then $\rho(G(r))=\rho\left(A_r\right)$ by Lemma \ref{QM} and Claim \ref{C13}, so $r=1$ from (\ref{E2}).
Suppose that $(k-1)(\frac{n-t+1}{2}+r)$ is odd. Then $r=0$ or $|r|\ge2$ and $G_1^*$ is nearly $(k-1)$-regular by Claim \ref{C13}.
If $|r|\ge2$, then by Lemma \ref{jn} and (\ref{E2}), we have $\rho(G(r))\le\rho(A_r)<\rho(A_1)$, a contradiction.
So $r=0$.
Assume that $d_{V_1'}(v)=k-2$ for $v\in V_1'$.
From Case 2.2., we have $c'=c'_{n,k,t}=\frac{1}{4n}-o(n^2)$.
By Lemma \ref{QM} and Claim \ref{C28},
\begin{align*}
\rho(G)&=\frac{\mathbf{x}^\top A(G)\mathbf{x}}{\mathbf{x}^\top \mathbf{x}}=\frac{\mathbf{x}^\top (A(G)+I_n(v)-I_n(v))\mathbf{x}}{\mathbf{x}^\top \mathbf{x}}<\rho (A_0)-c'.
\end{align*}
Let $\psi(x)$ and $\sigma(x)$ be the  characteristic polynomial of $A_1$ and $A_0$, respectively.
Then
\begin{align*}
\psi(x)&=x^3-(k+t-3)x^2-\frac{1}{4}\left((n+t-1)^2-4k(t-2)-(2t-3)^2-7\right)x\\
&\quad -\frac{n-t-1}{4}\left(tn-(t-1)(2k+t-2)+2t\right)
\end{align*}
and
\begin{align*}
\sigma(x)&=x^3-(k+t-3)x^2-\frac{1}{4}\left((n+t-1)^2-4k(t-2)-(2t-3)^2-3\right)x\\
&\quad -\frac{n-t-1}{4}\left(tn-(t-1)(2k+t-2)\right).
\end{align*}
Note that for $x>\frac{n}{2}$, $-3c'x^2+\frac{n^2}{4}c'<-\frac{1}{8}n+O(1)$.
By a direct calculation, for $\frac{n}{2}<x\le\Delta(G)=n-1$,
\begin{align*}
\psi(x)-\sigma\left(x+c'\right)&= -3c'x^2+x+\frac{n^2}{4}c'-\frac{t}{2}n-O(1)\\
&<-\frac{1}{8}n+n-1-\frac{t}{2}n+O(1)\\
&=\left(1-\frac{1}{8}-\frac{t}{2}\right)n+O(1)\\
&<0
\end{align*}
as $t\ge2$ and $n$ is sufficiently large.
So \[
\psi \left(\rho(A_0)-c'\right)
-\sigma(\rho(A_0))=\psi(\rho(A_0)-c')<0,
\]
implying that
\[
\rho(A_1)>\rho(A_0)-c'>\rho(G),
\]
a contradiction. It follows that $r=1$.

\noindent{\bf Case 2.4.} $n-t\equiv 2\!\!\pmod{4}$.

If $(k-1)(\frac{n-t+1}{2}+r)$ is even, then $\rho(G(r))=\rho\left(A_r\right)$ by Lemma \ref{QM} and Claim \ref{C13}, so
$r=\frac{1}{2}$ from (\ref{E2}).
Suppose that $(k-1)(\frac{n-t+1}{2}+r)$ is odd, then $r=-\frac{1}{2}$ or $|r|\ge\frac{3}{2}$ and $G_1^*$ is nearly $(k-1)$-regular by Claim \ref{C13}, and by Lemma \ref{jn} and (\ref{E2}), we have $\rho(G(r))\le\rho(A_r)<\rho(A_{\frac{1}{2}})$, a contradiction.
So $r=\frac{1}{2}$.
\end{proof}

By Claim \ref{Fca}, we complete the proof.
\end{proof}


\begin{thebibliography}{99}


\bibitem{BH} A.E. Brouwer, W.H. Haemers,
Spectra of Graphs, Springer, New York, 2012.

\bibitem{CLZ}
M. Chen, A.  Liu, X.  Zhang,
Spectral extremal results with forbidding linear forests,
Graphs Combin. 35 (2019)  335--351.

\bibitem{CLZ2}
M. Chen, A.  Liu, X.  Zhang,
On the spectral radius of graphs without a star forest,
Discrete Math. 344 (2021) 112269.



\bibitem{CDT}
S. Cioab\v{a}, D.N. Desai, M. Tait,
The spectral radius of graphs with no odd wheels,
European J. Combin. 99 (2022) 103420










\bibitem{CF}
S. Cioab\v{a}, L. Feng, M. Tait, X. Zhang,
The maximum spectral radius of graphs without friendship subgraphs,
Electron. J. Combin. 27 (2020) P4.22.


\bibitem{CH}
V. Chv\'{a}tal, D. Hanson,
Degrees and matching,
J. Combin. Theory Ser. B 20 (1976) 128--138.



\bibitem{DK}
D.N. Desai, L. Kang, Y. Li, Z. Ni, M. Tait, J. Wang,
Spectral extremal graphs for intersecting cliques,
Linear Algebra Appl. 644 (2022) 234--258.



\bibitem{FZL}
L. Fang, M. Zhai, H. Lin,
Spectral extremal problem on $t$ copies of $\ell$-cycle,
Electron. J. Combin. 31 (2024) 4.17.

\bibitem{FYZ} L. Feng, G. Yu, X. Zhang, Spectral radius of graphs with given matching number,
Linear Algebra Appl. 422 (2007) 133--138.

\bibitem{Fur}
Z. F\"uredi,
Tur\'an type problems, in: Surveys in Combinatorics, 1991 (Guildford, 1991),
London Math. Soc. Lecture Note Ser., 166, Cambridge Univ. Press, Cambridge, 1991,
pp. 253--300.

\bibitem{KST}
T. K\H{o}vari, V.T. S\'{o}s, P. Tur\'{a}n,
On a problem of K. Zarankiewicz,
Colloq. Math. 3 (1954) 50--57.


\bibitem{LND}
Y. Luo, Z. Ni, Y. Dong,
Spectral extremal graphs for disjoint odd wheels,
Linear Algebra Appl. 710 (2025) 243--266.


\bibitem{Mi}
H. Minc,
Nonnegative Matrices, John Wiley \& Sons, New York, 1988.

\bibitem{Mo} J. Moon, On independent complete subgraphs in a graph, Canad. J. Math. 20 (1968)
95--102.

\bibitem{NWK} Z. Ni, J. Wang, L. Kang, Spectral extremal graphs for disjoint cliques, Electron. J. Combin. 30 (2023) 1.20.







\bibitem{VN}
V. Nikiforov,
Stability for large forbidden subgraphs,
J. Graph Theory 62  (2009) 362--368.


\bibitem{NV2}
V. Nikiforov,
The spectral radius of graphs without paths and cycles of specified length,
Linear Algebra Appl. 432 (2010)  2243--2256.


\bibitem{Pik} O. Pikhurko, A note on the Tur\'an function of even cycles, Proc. Amer. Math. Soc. 140 (2012) 3687--3692.

\bibitem{Sim}
M. Simonovits, Extremal graph problems with symmetrical extremal graphs. Additional
chromatic conditions, Discrete Math. 7 (1974) 349--376.


\bibitem{MS}
M. Simonovits, A method for solving extremal problems in graph theory, stability problems, Theory of Graphs (Proc. Colloq., Tihany, 1966),  Academic Press, New York-London, 1968,  pp.~279--319.


\bibitem{T}
M. Tait,
The Colin de Verdi\`{e}re parameter, excluded minors, and the spectral radius,
J. Combin. Theory Ser. A 166 (2019) 42--58.

\bibitem{XZ} C. Xiao, O. Zamora,
A note on the Tur\'an number of disjoint union of wheels, Discrete Math. 344 (2021) 112570.

\bibitem{YLT}
L. Yuan, Extremal graphs for odd wheels, J. Graph Theory 98 (2021) 691--707.


\bibitem{YLY}
Q. Yuan, R. Liu, J. Yuan,
The spectral radius of $H_{2k}$-free graphs,
Appl. Math. Comput. 469 (2024) 128560.

\end{thebibliography}
\end{document}